\newcommand{\eqdef}{\ensuremath{\stackrel{\mbox{\upshape\tiny def.}}{=}}}
\def\1{\bm{1}}
\def\calH{{\mathcal{H}}}
\def\kkk{{\mathcal{K}}}
\def\uuu{{\mathcal{U}}}
\NewDocumentCommand{\F}{o}{
    \IfValueT{#1}{
            \mathbb{F}_{#1}
        }
    \IfValueF{#1}{
            \mathbb{F}
        }
                    }
\NewDocumentCommand{\R}{o}{
    \IfValueT{#1}{
            \mathbb{R}^{#1}
        }
    \IfValueF{#1}{
            \mathbb{R}
        }
                    }
\NewDocumentCommand{\N}{o}{
    \IfValueT{#1}{
            \mathbb{N}^{#1}
        }
    \IfValueF{#1}{
            \mathbb{N}
        }
                    }
\newcommand{\E}{\mathbb{E}}
\definecolor{darkcyan}{rgb}{0.0, 0.55, 0.55}
\definecolor{MidnightBlue}{RGB}{25,25,112}
\definecolor{MidnightBlueComplementingGreen}{RGB}{25,112,25}
\definecolor{MidnightBlueComplementingPurple}{RGB}{112,25,112}
\definecolor{MidnightBlueComplementingRed}{RGB}{112,25,69}
\definecolor{WowColor}{rgb}{.75,0,.75}
\definecolor{MildlyAlarming}{rgb}{0.85,0.25,0.1}
\definecolor{SubtleColor}{rgb}{0,0,.50}
\definecolor{antiquefuchsia}{rgb}{0.57, 0.36, 0.51}
\definecolor{fashionfuchsia}{rgb}{0.96, 0.0, 0.63}
\definecolor{jade}{rgb}{0.0, 0.66, 0.42}
\definecolor{caribbeangreen}{rgb}{0.0, 0.8, 0.6}
\definecolor{aquamarine}{rgb}{0.5, 0.8, 0.85}
\definecolor{lightseagreen}{rgb}{0.13, 0.7, 0.67}
\definecolor{darkgreen}{rgb}{0.0, 0.2, 0.13}
\definecolor{darkspringgreen}{rgb}{0.09, 0.45, 0.27}
\definecolor{attentioncolor}{RGB}{152,90,81}
\definecolor{burgred}{RGB}{40,3,22}
\definecolor{AnnieGreen}{RGB}{17,123,92}
\definecolor{Turquoise}{RGB}{64,224,208}
\definecolor{darkjade}{RGB}{0,122,84}
\definecolor{Window1}{RGB}{92,150,31}%
    \definecolor{Window1dark}{RGB}{41,67,13}%
\definecolor{Window2}{RGB}{255,168,28}
    \definecolor{Window2dark}{RGB}{114,75,12}
\definecolor{Window3}{RGB}{255,96,33}
    \definecolor{Window3dark}{RGB}{97,36,12}
\definecolor{InputColor}{RGB}{20,255,177}
    \definecolor{InputColorlight}{RGB}{222,237,229}
\definecolor{RedAlizarin}{rgb}{0.82, 0.1, 0.26}
\newcounter{termcounter}
\renewcommand{\thetermcounter}{\Roman{termcounter}}
\crefname{term}{term}{terms}
\def\term{\@ifnextchar[\term@optarg\term@noarg}
\def\term@optarg[#1]#2{%
  \textup{#1}%
  \def\@currentlabel{#1}%
  \def\cref@currentlabel{[][2147483647][]#1}%
  \cref@label[term]{#2}}
\def\term@noarg#1{%
  \refstepcounter{termcounter}%
  \textup{(\thetermcounter)}%
  \cref@label[term]{#1}}
\definecolor{MidnightBlue}{RGB}{25,25,112}
\definecolor{MidnightBlueComplementingGreen}{RGB}{25,112,25}
\definecolor{MidnightBlueComplementingPurple}{RGB}{112,25,112}
\definecolor{MidnightBlueComplementingRed}{RGB}{112,25,69}
\definecolor{coolblack}{rgb}{0.0, 0.18, 0.39}
\definecolor{deepjunglegreen}{rgb}{0.0, 0.29, 0.29}
\definecolor{applegreen}{rgb}{0.55, 0.71, 0.0}
\definecolor{WowColor}{rgb}{.75,0,.75}
\definecolor{MildlyAlarming}{rgb}{0.85,0.25,0.1}
\definecolor{SubtleColor}{rgb}{0,0,.50}
\definecolor{SubtleColor2}{rgb}{0.6,0.21,.50}
\definecolor{lasallegreen}{rgb}{0.03, 0.47, 0.19}
\newcounter{margincounter}
\NewDocumentCommand{\AK}{mo}{
    \IfValueF{#2}{
                        {{\scriptsize
                            \textcolor{deepjunglegreen}{
                            \hfill\\
                                \textbf{A:}
                                \textit{{#1}}
                            \hfill\\
                            }
                        }}
        }
    \IfValueT{#2}{
                        \marginnote{{\scriptsize
                            \textcolor{deepjunglegreen}{ 
                            \textbf{A:}
                            \textit{{#1}}
                            }
                        }}
        }
                    }
\NewDocumentCommand{\Xuwei}{mo}{
    \IfValueF{#2}{
                        {{\scriptsize
                            \textcolor{coolblack}{ 
                            \textbf{X:}
                            \textit{{#1}}
                            }
                        }}
        }
    \IfValueT{#2}{
                        \marginnote{{\scriptsize
                            \textcolor{coolblack}{ 
                            \textbf{X:}
                            \textit{{#1}}
                            }
                        }}
        }
                    }
\NewDocumentCommand{\IE}{mo}{
    \IfValueF{#2}{
                        {{\scriptsize
                            \textcolor{orange}{ 
                            \textbf{IE:}
                            \textit{{#1}}
                            }
                        }}
        }
    \IfValueT{#2}{
                        \marginnote{{\scriptsize
                            \textcolor{orange}{ 
                            \textbf{IE:}
                            \textit{{#1}}
                            }
                        }}
        }
                    }
\NewDocumentCommand{\GA}{mo}{
    \IfValueF{#2}{
                        {{\scriptsize
                            \textcolor{violet}{ 
                            \textbf{GA:}
                            \textit{{#1}}
                            }
                        }}
        }
    \IfValueT{#2}{
                        \marginnote{{\scriptsize
                            \textcolor{violet}{ 
                            \textbf{GA:}
                            \textit{{#1}}
                            }
                        }}
        }
                    }
\def\E{\mathbb{E}}
\def\R{\mathbb{R}}
\def\N{\mathbb{N}}
\def\cR{\mathcal{R}}
\begin{document}

\title{Neural Operators Can Play Dynamic Stackelberg Games}



	\author{\name Guillermo A. Alvarez\thanks{Equal contribution, all authors are listed in alphabetic order. }
	\email guialv@umich.edu\\
	\addr University of Michigan\\
	Department of Mathematics\\
    2074 East Hall, 530 Church Street Ann Arbor, Michigan, USA
	\AND
	\name Ibrahim Ekren\footnotemark[1]
	\email iekren@umich.edu\\
	\addr University of Michigan\\
	Department of Mathematics\\
    2074 East Hall, 530 Church Street Ann Arbor, Michigan, USA
    \AND
    \name Anastasis Kratsios\footnotemark[1] \thanks{Corresponding author.}
	\email kratsioa@mcmaster.ca\\
    \addr McMaster University and the Vector Institute\\
	Department of Mathematics\\
    1280 Main Street West, Hamilton, Ontario, L8S 4K1, Canada
    \AND Xuwei Yang\footnotemark[1]
    \email yangx212@mcmaster.ca\\
	\addr McMaster University\\
	Department of Mathematics\\
    1280 Main Street West, Hamilton, Ontario, L8S 4K1, Canada
	}
	
	\editor{}

\maketitle

\begin{abstract} 
Dynamic Stackelberg games are a broad class of two-player games in which the leader acts first, and the follower chooses a response strategy to the leader's strategy. Unfortunately, only stylized Stackelberg games are explicitly solvable since the follower's best-response operator (as a function of the control of the leader) is typically analytically intractable.  This paper addresses this issue by showing that the \textit{follower's best-response operator} can be approximately implemented by an \textit{attention-based neural operator}, uniformly on compact subsets of adapted open-loop controls for the leader.  
We further show that the value of the Stackelberg game where the follower uses the approximate best-response operator approximates the value of the original Stackelberg game.  Our main result is obtained using our universal approximation theorem for attention-based neural operators between spaces of square-integrable adapted stochastic processes, as well as stability results for a general class of Stackelberg games.
\end{abstract}

\section{Introduction}
In the classical formulation of Stackelberg games, there are generally two players: a leader (major) who moves first and a follower (minor) who then reacts.  
One is typically interested in studying the equilibrium of these games, in which both players cannot increase their utilities by (re)acting differently.  The generic structure of these games has led their equilibria to become a powerful mathematical tool to describe the evolution of incentives in complex environments with economic applications ranging from control of disease transmission in epidemiology~\cite{aurell2022optimal,hubert2022incentives}, contract design~\cite{conitzer2006computing,elie2019tale,keppo2024dynamic,hernandez2024time,hernandez2024closed}, advertising~\cite{he2008cooperative}, mobile network planning~\cite{zheng2018stackelberg}, economic behaviour in oligopolies~\cite{carmona2021mean}, brokerage~\cite{alvarez2023optimal}, (re)insurance~\cite{cao2022stackelberg,kroell2023optimal,ghossoub2024stackelberg}, risk management~\cite{bensalem2020prevention,li2022cooperative}, algorithmic auction/mechanism design~\cite{conitzer2006computing,konrad2007generalized,dierks2022cloud}, security~\cite{kar2017trends,an2011refinement}, and green investments~\cite{zhang2023optimization}.  Most of these applications consider \textit{dynamic} Stackelberg games, where the game is played continuously in several rounds.  Even though these games provide powerfully descriptive theoretical vehicles, the highly intertwined structure of Stackelberg equilibria can be challenging both numerically and analytically.

This paper shows that deep learning can provide a viable and generic computational vehicle by which dynamic Stackelberg games can be computationally solved.  
We exhibit a class of \textit{neural operators} leveraging an attention mechanism which can approximately implement the follower's best response map to arbitrary precision, uniformly on compact subsets of the leader's strategies (defined below).  
Unlike most neural operator models, which focus on learning the solution map of PDEs~\cite{kovachki2021universal,lanthaler2022error,lee2023hyperdeeponet,lanthaler2023curse,kovachki2023neural,benitez2023out,raonic2024convolutional,bartolucci2024representation,fanaskov2024spectral} or inverse-problems~\cite{calderon1997artificial,molinaro2023neural,de2022deep,kratsios2024mixture}, our neural operators are not defined between function spaces but between spaces of stochastic controls.  
Our attention mechanism mimics that of~\cite{vaswani2017attention} used in transformers~\cite{bahdanau2014neural} while reflecting the geometry of the input and output spaces of stochastic processes, and it extends the attention mechanisms of \cite{acciaio2023designing}.
We note that, it is natural to consider compact sets of controls not only from approximation-theoretic vantage point but also from the control-theoretic perspective; this is because this guarantees the existence of a Stackelberg equilibrium under only minimal assumptions.

Here, we consider the following class of dynamic Stackelberg games, with stochastic effects, where both players (re)act in continuous time according to the following general dynamics 
\begin{align} 
d X_t = f(X_t, u^0_t, u^1_t) dt 
 + \sigma (X_t, u^0_t, u^1_t) d W_t 
 \notag 
 \end{align} 
where $W_{\cdot}\eqdef (W_t)_{t\ge 0}$ is $d$-dimensional standard Brownian motions and $u_{\cdot}^i\eqdef (u_t^i)_{t\ge 0}$ are the (re)actions/strategies of each player, where $i=0$ is the leader and $i=1$ indexes the follower. Each player seeks to optimize their respective objective functions, one in which we impose minimal continuity requirements since we are not interested in analytic expression, which would require highly stylized assumptions on the dynamics and objective functions of all involved players.  Rather, our goal is to show that a deep learning solution via neural operators is possible for a broad class of Stackelberg games lying outside the score of these classical stylized settings.  
Our first main result (Theorem~\ref{thrm:Main__BestResponse}) shows under enough strong-convexity requirements on the utility of the follower the best response maps depend continuously on each leader's actions, then there is a neural operator which can approximate the follower's best response map, uniformly over any compact set of actions of the leader, to any given precision. 

In general, it is well-known that the approximation of non-linear maps/operators between infinite-dimensional Hilbert spaces by deep learning models may be practically challenging due to necessarily slow convergence rates; see e.g.~\cite{lanthaler2023curse}, which is effectively an exacerbated version of the curse of dimensionality known in the finite-dimensional setting, see e.g.~\cite{ShenYangZhang_2022_OptimalReLU}.  Unlike the finite-dimensional setting, sufficient smoothness is insufficient to obtain reasonable convergence rates in general,~\cite{galimberti2022designing}, and typically, one has to hope for favorable structures which can be exploited by the neural operator; see e.g.~\cite{marcati2023exponential}, to obtain fast convergence rates. Fortunately, we identify a set of structures that can be exploited by our neural operator for a class of Stackelberg games that encompass analytically-solvable linear-quadratic games.  Our second main result (Theorem~\ref{thrm:Main__BestResponse___goodrates}) shows that if the compact set of controls is compatible with the best-response map, then one may guarantee efficient convergence rates for neural operator approximations of the best response map for the follower.  
We conclude that neural operators can efficiently approximate the solutions to a wide class of Stackelberg games, encompassing those games which are solvable via classical analytic means.

Additionally, we identify an \textit{unsupervised} objective function which provides a heuristic helping to detect the suitability of a neural operator approximating the best response map of the follower (Theorem~\ref{thrm:Objective}).  Importantly, this heuristic objective function does not require observations of the true optimal response (which would be an irrealistic supervised problem).

We use control theoretical arguments to prove that the leader and the follower have optimal strategies with enough continuous dependence on one another to be \textit{uniformly} approximable on compact sets of (re)actions.  We also provide an illustrative counterexample showing that the best-response map might fail to be continuous without these strong-convexity requirements.  Thus, without the strong-convexity assumption, discontinuous functions cannot be uniformly approximated by any continuous models due to the Uniform Limit Theorem; see e.g.~\cite[Theorem 21.6]{MunkresTopBook}.   

On the technical front: our main control-theoretic contributions (Lemma~\ref{lm:stabilityJ0} together with~\ref{prop:Sufficient_ContinuityBeforeDiscretization}) show that generically the optimal response of the follower is $1/2$-H\"{o}lder continuous in the leader's control if the problem of the follower is strongly convex. 
Our main approximation theoretic contribution is a quantitative universal approximation theorem (Theorem~\ref{thrm:UniversalApprox}) showing that neural operators are capable of approximating H\"{o}lder continuous non-linear operators between space of square-integrable $\mathbb{F}$-adapted processes (open-loop controls). Together, these control-theoretic and approximation-theoretic are enough to show that the best-response map is approximable by neural operators.  Moreover, for suitable sets of controls, our precise analysis both of the regularity of the best response map and the dependence of the neural operator complexity on the set of controls, allow us to conclude that polynomial approximation rates are possible (Theorem~\ref{thrm:Main__BestResponse___goodrates}).

Our neural operators leverage an attention-like decoding layer, similar to transformers~\cite{vaswani2017attention}, which allows for nonlinear decoding, unlike PCA-net~\cite{lanthaler2023operator}, the encoder-decoder models of~\cite{galimberti2022designing}, and several others.  The relationship between our infinite-dimensional analogue of the classical attention of~\cite{bahdanau2014neural} is also discussed. Attention mechanisms in operator learning, by now, have found common use in implementations; see e.g.\ the Galerkin transformers of~\cite{cao2021choose} or the Continuum Attention Mechanism of~\cite{calvello2024continuum}.

\paragraph{Organization of Paper}
Following the literature review in Section~\ref{s:Rel_Lit}, the remainder of our paper is organized as follows:
\begin{enumerate}
    \item[(i)] Section~\ref{s:Rel_Lit} and~\ref{s:prelim}, respectively, review the literature and the necessary background in stochastic analysis and in deep approximation theory to formulate our main results.
    \item[(ii)] Section~\ref{s:Main_Results} contains our main results. 
    \item[(iii)] Section~\ref{s:ProofSketch} explains why our main results work by overviewing our proof strategy, during which we showcase our supporting technical results of independent technical interest.  
    \item[(iv)] Section~\ref{s:Exmaples_Regular Optimal Response} showcases examples of Stackelberg games satisfying our convexity requirements.  
\end{enumerate}
All technical derivations are relegated to appendix~\ref{s:Proofs}.

\section{Related Literature}
\label{s:Rel_Lit}

\textbf{Stackelberg Games in Machine Learning}
In Stackelberg games, both players seek to maximize their gain while being fully rational.  This characteristic ``conditional'' sequential structure is the hallmark challenge rendering Stackelberg games analytically intractable and the reason motivating significant attention from the machine learning community~\cite{reisinger2020rectified,ito2021neural,gao2022achieving,haghtalab2023calibrated,harris2023stackelberg,gerstgrasser2023oracles,dayanikli2023machine}, and their related FBSDEs~\cite{TakAnnieFBSDE_Fase_2024}, looking for new algorithmic tools capable of solving this class of differential games.  Nevertheless, there is currently no available deep learning model which is guaranteed to solve a Stackelberg game, much less in continuous time with stochastic effects.

\paragraph{Neural Operators}
The power of deep learning to solve previously intractable high-dimensional computational problems has motivated the deep learning community to extend these tools to the infinite-dimensional setting with models such as DeepONets \cite{lu2019deeponet,lu2021learning,goswami2022physics} and a variety of \textit{neural operator} architectures; e.g. Fourier Neural Operators~\cite{li2020fourier,kovachki2021universal,li2023fourier}, graph neural operators~\cite{anandkumar2020neural}, causal neural operators~\cite{galimberti2022designing}, neural operator analogues of transformers~\cite{hao2023gnot}, convolutional neural operators~\cite{raonic2024convolutional}, encoder-decoder models such as PCA-net~\cite{lanthaler2022nonlinear}, and a myriad of other models.  The community has largely been motivated by demand in the scientific computing community, focusing on designing neural operators tailored which can learn to solve (i.e.\ learn the solution operator) to high-dimensional partial integro-differential equations (PIDEs) with applications ranging from physics and engineering \cite{de2024error} to quantitative finance \cite{acciaio2023designing}.  However, the full power of neural operators remains otherwise largely unexplored as the community has focused mainly on the approximation capacity of these models between function spaces with a view towards PIDEs.  Here, we probe the limits of neural operators beyond PIDEs by showing that they can solve a broad class of problems at the intersection of game theory and stochastic analysis.  



\section*{Notation}
We use the following notations.  
For any $C\in \mathbb{N}_+$, we denote the $C$-simplex by $\Delta_C \eqdef \{u\in [0,1]^C:\, \sum_{c=1}^C\, u_c = 1\}$
and we define the associated softmax function by $\operatorname{softmax}_C:\mathbb{R}^C\ni w
                        \mapsto 
                    \big(
                        e^{w_c}/\sum_{c=1}^C e^{w_c}
                    \big)_{c=1}^C \in \Delta_C
$.
Both in $\Delta_C$ and $\operatorname{softmax}_C$, the subscript $C$ will be suppressed when clear from the context.
We will write $f\in \tilde{\mathcal{O}}(g)$ if $f\in \mathcal{O}(g\,\log^k(g))$ for some $k\in \mathbb{N}_+$.

\section{Preliminaries}
\label{s:prelim}

We now overview the background required to formulate the main results of our paper and formalize our neural operator model.  We first overview the notion of a square-integrable predictable process from stochastic analysis and then the definition of a multilayer perceptron (MLP) from deep learning.  Additional details are included in Appendix~\ref{app:Background}.

\subsection{Predictable Processes}
\label{s:predictable_processes}
Fix a time horizon $T>0$ and let $(\Omega,\mathcal{F},\mathbb{F},\mathbb{P})$ be a filtered probability space whose filtration $\mathbb{F}\eqdef (\mathcal{F}_t)_{0\le t\le T}$ is generated by a $d$-dimensional Brownian motion $W_{\cdot}\eqdef (W_t)_{0\le t\le T}$; for some $d\in \mathbb{N}_+$.  We consider the space $\mathcal{H}^2_T$ of all square-integrable $\mathbb{F}$-\textit{predictable processes} on $(\Omega,\mathcal{F},\mathbb{F},\mathbb{P})$ whose elements $H_{\cdot}\eqdef (H_t)_{0\le t\le T}\in \mathcal{H}^2_T$ consist of $d$-dimensional $\mathbb{F}$-predictable processes for which the norm
\[
        \|H\|_{\mathcal{H}^2_T}^2
    \eqdef 
        \mathbb{E}\biggl[
                \int_0^T \, 
                    |H_s|^2
                    d\,s
        \biggr]
,
\]
is finite.  Furthermore, $\mathcal{H}^2_T$ is a separable infinite-dimensional Hilbert space whose inner product $\langle \cdot,\cdot\rangle_{\mathcal{H}^2_T}$, is given for each $H_{\cdot},\tilde{H}_{\cdot}\in \mathcal{H}^2_T$ by
\[
        \langle H_{\cdot},\tilde{H}_{\cdot}\rangle_{\mathcal{H}^2_T}^2
    \eqdef 
        \mathbb{E}\biggl[
                \int_0^T \, 
                    H_s^\top\tilde{H}_s
                    \,ds
        \biggr]
.
\]

Similarly to the Fourier neural operator and the neural operators of, our approximation results will rely on an orthogonal basis%
\footnote{Some authors emphasize that ``orthonormal basis'' is rather a complete orthonormal system since it is not a linear algebraic basis.  However, we follow the common abuse of terminology standard in functional analysis.}~%
of $\mathcal{H}^2_T$.  We begin by constructing a relatively computationally convenient basis of $L^2(\mathcal{F}_t)$ based on the Wiener Chaos decomposition.  A key feature of the following orthonormal basis is that no iterated stochastic integrals need to be explicitly computed, as is the case with general Wiener Chaos decompositions; see Appendix~\ref{app:Background} for additional details.

For any $i\in \mathbb{N}$, the Hermite polynomials $(h_i)_{i\in \mathbb{N}}$ are the eigenfunctions of the generator $\frac{d^2}{dx^2}-x\frac{d}{dx}$ of the Ornstein-Uhlenbeck process $dX_t = -X_t +\sqrt{2}dW_t$.  For each $i\in \mathbb{N}_+$, the $i^{th}$ Hermite polynomial $h_i$ is given recursively by Rodrigues' formula as
\[
h_i(x) = \frac{(-1)^i}{i!} e^{x^2/2} \frac{d^i}{dx^i}e^{-x^2/2} , 
\qquad 
h_0(x) = 1.
\]
The Hermite polynomials allow us to define a family of random variables $\big\{
u^t_{i,j,k}
:\,
j\in \mathbb{N},\,i,k\in \mathbb{N},\,\frac{k+1}{2^i}\le 
1
\big\}\subset L^2(\mathcal{F}_t)$ where each $u_{i,j,k}^t$ is defined by
\begin{equation}
    u^t_{i,j,k}
\eqdef 
    \prod_{\tilde{j}=1}^{j}
    h_{{\tilde{j}}}\big(
    2^i \,W_{\frac{tk}{2^i}}
    -2^{i+1}\,W_{\frac{t(1+2k)}{2^{i+1}}}
    +2^i\,W_{\frac{t(k+1)}{2^i}}
    \big) .
    \label{base_l2_ft}
\end{equation}

Using~\eqref{base_l2_ft}, we construct a predictable and dynamic version of the Wiener chaos decomposition.  
First recall the \textit{Haar (wavelet) system} on the larger space $L^2([0,T])$ where 
 $(\psi_{i,k})_{i,k\in \mathbb{N};0\le k,\,\frac{k+1}{2^i}\le 1}$ and  
 \[
    \psi_{i,k}({t}) \eqdef 2^i\big(
            I_{[T\frac{k}{2^i},T\frac{1+2k}{2^{i+1}})}({t})
        -
            I_{[T\frac{1+2k}{2^{i+1}},T\frac{k+1}{2^i})}({t})
    \big)
\]
which is a complete orthonormal basis of $L^2([0,T])$, see \citep[Chapter 3]{MeyerWaveletBook_Vol1_1990}.  The Haar wavelet system will allow us to activate/deactivate the random variables in Wiener Chaos, in~\eqref{base_l2_ft}, as a function of time.  We focus on the collection of simple processes 
obtained as linear combinations of
\begin{equation}
\label{eq:simple_processes}
\begin{aligned}
\mathcal{S}\eqdef & \big\{
u_{i,j,k}^{s_1,s_2}
:\,
i,j,k,s_1,s_2\in \mathbb{N},\, s_2+1\leq 2^{s_1},\frac{k+1}{2^i}\le \frac{s_2}{{2^{s_1}}}
\big\}
\\
    u_{i,j,k}^{s_1,s_2}(t,{\omega})\eqdef & \psi_{s_1,s_2}(t)\cdot u^T_{i,j,k}
(\omega)
.
\end{aligned}
\end{equation}
It can be shown, see Lemma~\ref{lem:Orthonormal} in the proofs section, that $\mathcal{S}$ is an orthonormal basis of $\mathcal{H}^2_T$.  

Furthermore, $\mathcal{S}$ has an elegant interpretation as a Haar wavelet expansion in time and the iterated It\^{o} stochastic integral of a Haar wavelet expansion in space.  The ability to explicitly compute the iterated It\^{o} stochastic integrals of the Haar wavelet system in space makes this closed-form expansion particularly favourable, especially in higher dimensions where the computation of iterated stochastic integrals can be computationally intensive. 

\subsubsection{Examples of Compact Sets of Square-Integrable Controls}
\label{s:predictable_processes__ss:CompactControls}
We will often be considering compact subsets of $\mathcal{H}_T^2$ whereon we frame the existence of Stackelberg equilibria exist and uniform approximation is possible.  
There are several other examples of a compact subset of $\mathcal{H}_T^2$ routinely encountered in the literature, for example, sets of processes which are Malliavin differentiable with uniformly bounded Malliavin Derivative, see~\citep[Corollary C.3.]{BenosDuedahlMeyerBrandisProske_2018_CompactnessMaillivain_AIHP}, or perturbations of closed-loop controls which are efficiently approximable (see Section~\ref{s:EfficientRates}).  Two illustrative, but broad classes, of examples are now constructed; building on compactness results in classical function spaces.

\begin{example}[Compactness Via Regularity of the Malliavin Derivative]
\label{ex:CompactnessviaMalliavin}
    For given $C\geq 0$, and $\alpha\in (0,1)$, define $\kkk_{\alpha,C}\subset\mathcal{H}^2_T$ as the set of $H\in \mathcal{H}^2_T$ so that 
    $$\sup_{0\leq s\leq t\leq T}\E[|D_sH_t|^2+|H_t|^2]\leq C,\,\sup_{0\leq s<t\leq T}\frac{\E[|H_t-H_s|^2]}{|t-s|^\alpha}\leq C,\mbox{ and }\sup_{r,0\leq s<t\leq T}\frac{\E[|D_tH_r-D_sH_r|^2]}{|t-s|^\alpha}\leq C.$$
For $H\in \kkk_{\alpha,C}$, thanks to \cite[Proposition 1.3.8]{Nualart_MalCRT_2006} we can compute the Malliavin derivative
    $D_t\int_0^T H_r dW_r=H_t+\int_t^T D_t H_r dW_r$ for $0\leq s<t\leq T$ so that we have the following estimate 
    \begin{align*}
        &\frac{\E[|D_t\int_0^T H_r dW_r-D_s\int_0^T H_r dW_r|^2]}{|t-s|^\alpha}\\
        &\leq 2\frac{\E[|H_t-H_s|^2]+\E[|\int_s^t D_s H_r dW_r|^2]+\E[|\int_t^T D_t H_r-D_s H_r dW_r|^2]}{|t-s|^\alpha}\\
        &\leq 2\frac{\E[|H_t-H_s|^2]}{|t-s|^\alpha}+2\frac{\int_s^t \E[|D_s H_r|^2]dr}{|t-s|^\alpha}+2\int_t^T\frac{\E[ |D_t H_r-D_s H_r |^2]}{|t-s|^\alpha}dr\leq 2C(1+T^{1-\alpha}+T).
    \end{align*}
    Thanks to \cite[Corollary C3]{BenosDuedahlMeyerBrandisProske_2018_CompactnessMaillivain_AIHP}, this estimate impiles that the set of random variables $\{\int_0^T H_s dW_s: H\in \kkk_{\alpha,C}\} $ is relatively compact in the set of square integrable random variables. This implies by Ito's isometry that $\kkk_{\alpha,C}$ is relatively compact in $\mathcal{H}^2_T$.
\end{example}

In a very special case of Example~\ref{ex:CompactnessviaMalliavin}, one may require the predictable process $H_{\cdot}$ to be a (non-random) smooth function.  This next example shows precisely this, and it elucidates the link between classical families of smooth functions and compact sets of open-loop controls.  

\begin{example}[Compactness Via Continuously Differentiable Martingale Controls]
\label{ex:Sobolev_Copy}
Fix $T>0$.
Let $W^{1,2}([0,1])$ denote the Sobolev space on the unit interval $[0,1]$; and denote its norm by $\|\cdot\|_{W^{1,2}}$.
By the Rellich-Kondrashov Theorem, see e.g.~\cite[Theorem 5.1]{EvansPDE_Book_2010}, the set of function $\varsigma:[0,1]\to \mathbb{R}$ satisfying 
\begin{equation}
\label{eq:Sobolev_Unit_Ball}
\|\varsigma\|_{W^{1,2}} \le 1
\end{equation}
is compact in $L^2([0,1])$.  By the Ito isometry the set of $\mathcal{F}_T$-measurable random variables $\int_0^T\, \varsigma(t)\,dW_t$ is therefore compact in $L^2(\mathcal{F}_T)$.  Since conditional expectations are non-expansive ($1$-Lipschitz) then the set $\mathcal{K}\subset\mathcal{H}_T^2$ of processes/open-loop controls $u\eqdef (u_t)_{0\le t\le T}$ of the form
\[
        u_t 
    = 
        \mathbb{E}\biggr[\int_0^T\, \varsigma(s)\,dW_s \Big| \mathcal{F}_t\biggl]
    = 
        \int_0^t\, \varsigma(s)\,dW_s 
\]
where $\varsigma$ satisfies~\eqref{eq:Sobolev_Unit_Ball}, is compact in $\mathcal{H}_T^2$, and the last inequality held by the Martingale property of the It\^{o} (stochastic) integral.
\end{example}
\begin{example}[Deterministic H\"{o}lder Continuous Controls]
\label{ex:bounded_Holder}
Fix $T>0$, $0<\alpha \le 1$, and consider the set $\mathcal{K}_{\alpha}$ of deterministic controls $u_{\cdot}=(u_t)_{t\ge 0}$ in $\mathcal{H}_T^2$  where $t\mapsto u_t$ is an $\alpha$-H\"{o}lder function mapping $[0,T]$ to $[-1,1]^d$.  
In this case, for each $u_{\cdot},v_{\cdot}\in \mathcal{K}_{\alpha}$ we have
\[
        \mathbb{E}\biggl[\int_0^T\, |u_t-v_t|^2\biggr]^{1/2}
    =
        \biggl(
            \int_0^T\, |u_t-v_t|^2
        \biggr)^{1/2}
    \le 
        \max_{0\le t\le T}\, |u_t-v_t|
.
\]
Therefore, the map $C([0,1],\mathbb{R}^d)\to \mathcal{H}_T^2$ is a $1$-Lipschitz embedding when the domain is equipped with the uniform norm.  By the Arzel\'{a}–Ascoli Theorem, we have that any set of uniformly bounded $\alpha$-H\"{o}lder functions is relatively compact in $C([0,1],\mathbb{R}^d)$; thus, $\mathcal{K}_{\alpha}$ is relatively compact in $\mathcal{H}_T^2$.
\end{example}

One can easily extend the construction in Example~\ref{ex:Sobolev_Copy} to non-Martingale controls iterated integrals using the isometries between the space of symmetric functions in $L^2([0,1]^q)$, for any $q\in \mathbb{N}_+$, and the $q^{th}$ Wiener Chaos (see e.g.~\cite[Theorem 1.1.1]{Nualart_MalCRT_2006}); however, do not do so for simplicity of presentation.
\begin{example}[Conditioned Lipschitz Perturbations of Random Variables at Terminal Time]
\label{ex:Compacta_ByParameterized_Maps}
Fix $X\in L^2(\mathcal{F}_T)$, and fix $T>0$.  Consider the set $\mathcal{X}$ of $1$-Lipschitz
function $f:\mathbb{R}^d\to [-1,1]^d$ which are supported on the hypercube $[-1,1]^d$; i.e. $f(x)=0$ if $x\not\in [-1,1]^d$.    
By the Arzela-Ascoli Theorem, $\mathcal{X}$ is relatively compact in $C(\mathbb{R}^d,\mathbb{R}^d)$.  
Since the map sending any $f\in C(\mathbb{R}^d,\mathbb{R}^d)$ to $f(X)\in L^2(\mathcal{F}_T)$ is $1$-Lipschitz then the set of random variables 
$\{f(X)\in L^2(\mathcal{F}_T):\, f\in \mathcal{X}\}$ is compact in $L^2(\mathcal{F}_T)$.  As in Example~\eqref{ex:Sobolev_Copy}, since conditional expectations are $1$-Lipschitz then the set of controls $u_{\cdot}\eqdef (u_t)_{t\ge 0}\in \mathcal{K}\subset \mathcal{H}_T^2$ of the form
\[
        u_t 
    = 
        \mathbb{E}\big[
                f(X)
            \big| 
                \mathcal{F}_t
        \big]
\]
where $f\in \mathcal{X}$, is relatively compact in $\mathcal{H}_T^2$.
As a concrete example, one may take $f$ to belong to the set of $1$-Lipschitz ReLU Neural Networks with output restricted to belong to $[-1,1]^d$; see e.g.~\cite[Theorem 1.1]{hong2024bridging}.
\end{example}
\begin{remark}[{Alternative Proofs of Compactness Directly Via Example~\ref{ex:CompactnessviaMalliavin}}]
Example~\ref{ex:Compacta_ByParameterized_Maps} can also be obtained from Example~\ref{ex:CompactnessviaMalliavin} upon adding Malliavin differentiability requirements on $X$ and additional regularity.  Examples~\ref{ex:Sobolev_Copy} and~\ref{ex:bounded_Holder} can have alternatively be obtained as straightforward consequences of Example~\ref{ex:CompactnessviaMalliavin}.  We opted for self-contained presentations for each example to illustrate various construction methods for compacta in $\mathcal{H}_T^2$.
\end{remark}
In practice, one often discretizes their space when implementing it on a digital machine.  In these cases, the set of controls is finite and, therefore, compact.
\begin{example}[Finite Sets of Controls]
\label{ex:finite_controls}
Let $I\in \mathbb{N}$ and $\mathcal{K}\eqdef \{u_i\}_{i=1}^I\subset \mathcal{H}_T^2$.  Then, $\mathcal{K}$ is compact.
\end{example}

\subsection{The Dynamic Stackelberg Game} 
\label{s:FormulationModel__ss:SGame}
In the previously introduced probability space $(\Omega, \mathcal{F}, \{\mathcal{F}\}_{0\leq t\leq T}, \mathbb{P})$, we consider a Stackelberg game with a leader indexed with $i=0$ and a follower indexed with $i=1$. The state process of the game is described by the stochastic differential equation 
\begin{align} 
d X_t = f(X_t, u^0_t, u^1_t) dt 
 + \sigma (X_t, u^0_t, u^1_t) d W_t , 
 \label{dX} 
 \end{align} 
and $u^0_t\in \mathbb{R}^{d_0}$ and $u^1_t \in \mathbb{R}^{d_1}$ are the controls of the leader and the follower, respectively. The exact set of admissible controls will be provided below. We assume that a deterministic initial $X_0\in \R^d$ is fixed and is known to both agents. Thus, we omit the dependence of various parameters on $X_0$.  
The cost functionals of the two players are given by 
\begin{align} 
 & J_0(u^0, u^1) = \mathbb{E} \Big[ \int_0^T L_0(X_t, u^0_t , u^1_t ) dt 
 + g_0(X_T) \Big] , \label{J0} \\ 
& J_1(u^0, u^1) = \mathbb{E} \Big[ \int_0^T L_1(X_t, u^0_t , u^1_t ) dt 
 + g_1(X_T) \Big] ,  \label{J1} 
\end{align} 
where 
$L_i: \mathbb{R}^d \times \mathbb{R}^{d_0}\times \mathbb{R}^{d_1} \mapsto [0, \infty)$ and 
$g_i: \mathbb{R}^d \mapsto [0, \infty)$, 
$i=0,1$. 
We require the following regularity conditions of the involved functions.
\begin{assumption}[Regularity Conditions]
\label{assm:LipfsigLg}
There exists a constant $K>0$ such that for $h(x, u^0, u^1) = f(x, u^0, u^1)$, $\sigma(x, u^0, u^1)$,  $L_i(x, u^0, u^1)$, and $g_i(x)$, $i=0$, $1$,   
\begin{align} 
  | h (x, u^0, u^1) - h (\tilde{x}, \tilde{u}^0, \tilde{u}^1) | \leq 
  K ( |x - \tilde{x}|  + | u^0 - \tilde{u}^0| + |u^1 - \tilde{u}^1 |  ) .  
\label{LipfLg} 
\end{align} 
\end{assumption}
\noindent We define 
\begin{align*} 
\uuu_i & = \Big\{ u: [0, T] \times \Omega \to \mathbb{R}^{d_i} | 
\, \text{$u(\cdot)$ is $\{\mathcal{F}\}_{t}$-adapted}, \ 
 \mathbb{E} \int_0^T |u_t|^2dt < \infty \Big\} \notag 
\end{align*}  
and fix $\kkk_0\subset \uuu_0$ so that $\kkk_0$ is the set of possible controls of the leader, $\uuu_1$ is the set of possible controls of the follower. The introduction of $\kkk_0$ is needed due to the fact that our operators in Subsection~\ref{s:prelim__ss:NNs} will only perform optimization relative to a compact subset $\kkk_0$ of $\uuu_0$.

For each $u^0\in \uuu_0$, the set of best responses for the follower is
\begin{align} 
 \mathcal{R}(u^0) = \big\{ u\in \uuu_1: \, 
 J_1(u^0, u) \leq J_1(u^0, u^1), \, \forall u^1 \in \uuu_1 \big\} . 
\label{R(u0)}  
\end{align} 
Following the definitions in \cite{bensoussan2015maximum}, we define {adapted} open-loop (AOL) responses of the follower to the controls of the leader by
\begin{align}
    \bar \uuu_1 = \Big\{ u: [0, T] \times \Omega \times  \kkk_0\to \mathbb{R}^{d_1} | 
\, \forall u_0\in \kkk_0,\, u(\cdot,u^0)\in \uuu_1\}.
\end{align}
We recall that implicitly we make the assumption that the initial point $X_0$ of $X$ is fixed throughout the paper. If this initial condition is not fixed, one has to allow the elements of $ \bar \kkk_1$ to also depend on this initial condition as it is the case in \cite{bensoussan2015maximum}.

We study the Stackelberg equilibria for the leader-follower problem; that is, a set of leader-follower strategies wherein the follower optimally responds to the preemptive optimal action of the leader in such a way that
neither player can gain utility by perturbing their strategy.  Formally, a Stackelberg equilibrium is defined as follows.
\begin{definition}[Stackelberg Equilibrium] 
\label{def:stackelberg}
A Stackelberg equilibrium (relative to $\kkk_0\subset \uuu_0$) of the leader-follower game \eqref{dX}-\eqref{J0}-\eqref{J1} is a pair $(u^{0, \star}, U^{\star})\in \kkk_0\times \bar \uuu_1 $ such that $U^{\star}(u^{0}) \in \mathcal{R}(u^{0})$ for all $u^0 \in \mathcal{U}_0$ and 
\begin{align} 
 J_0(u^{0, \star},U^{\star}(u^{0, \star})) \leq J_0(u^0,U^{\star}(u^0))  \mbox{ for all }u^0\in \kkk_0. \notag  
\end{align}
If it exists, a map $U^{\star}$ is called \textit{a} best response map of the follower.

\end{definition}

In general, $\mathcal{R}(u^0)$ can be empty for some $u^0\in \kkk_0$. If this happens, the equilibrium will not exist. However, if the problem of the follower is convex enough,  $\mathcal{R}(u^0)$ is reduced to a point $ U^{\star}(u^0)\in\uuu_1 $ that can be described by an FBSDE. Thus, the existence of a Stackelberg equilibrium is reduced to the optimization of $u^0\in \kkk_0\mapsto J_0(u^0,U^{\star}(u^0)) $ that we call the effective criterion of the leader. This can be done if $\kkk_0$ is compact or under additional structural assumption on the data as a form of control of solutions of FBSDEs. 
We will assume that the optimal response operator of the follower exists and possesses a minimal level of regularity.  
\begin{assumption}[H\"{o}lder Continuity of The Follower's Best Response]
\label{assm:continuity}
There exists a H\"{o}lder continuous mapping $u^0\in \kkk_0\mapsto U^{\star}(u^0)\in \uuu_1$ so that $U^{\star}(u^0) \in \mathcal{R}(u^0)$ for all $u^0\in \kkk_0$.
\end{assumption}
\begin{remark}
Though several of our universal approximation results (Theorem~\ref{thrm:UniversalApprox}) can be applied only while assuming continuity of the following best response, our quantitative estimates fundamentally rely on H\"{o}lder continuity since we use properties of doubling metrics, building on the method of~\cite{kratsiosuniversal}, and metric snowflakes; see~\cite[page 66]{NikWeaversFamousBook_2018} for definitions.
\end{remark}

In Section~\ref{s:ProofSketch} and~\ref{s:Exmaples_Regular Optimal Response}, below, we show this Holder dependence estimates of the optimal response if the optimization problem of the follower is strongly convex. 
In fact, we provide an example of a static game in Section \ref{s:CounterEx} showing that even if the problem of the follower is only convex but not strongly convex, the optimal response of the follower will lack continuous dependence on the control of the leader. In such cases, our neural operator cannot approximate the optimal response. Thus, Assumption \ref{assm:continuity} is not merely an often satisfied and purely technical assumption. 

\subsection{Neural Operators}
\label{s:prelim__ss:NNs}

Fix an activation function $\sigma:\mathbb{R}\times \mathbb{R}\to \mathbb{R}$ and, for any $N\in \mathbb{N}_+$,  define its componentwise composition with any vector $x\in \mathbb{R}^N$ with trainable parameter $\boldsymbol{\alpha} \in \mathbb{R}^{N}$, by $
        \sigma_{\boldsymbol{\alpha}}\bullet x 
    \eqdef 
        \big(
            \sigma_{\boldsymbol{\alpha}_i}(x_i)
        \big)_{i=1}^d
$.
For the majority of our paper, we consider neural operators either with 
an unattainable activation function satisfying the condition of~\citep{KidgerLyons} or a trainable variant $\sigma\in C(\mathbb{R}^2)$ of the super-expressive activation function of~\cite{zhang2022deep}; defined shortly. 
In the former case, we consider the following activation functions. 
\begin{example}[{\cite{KidgerLyons}-Type ``Standard'' \textit{Trainable} Functions}]
\label{ex:Activation_Standard}
There is a non-affine $\sigma_0\in C(\mathbb{R})$ such that: there exists some $t_0\in \mathbb{R}$ at which $\sigma_0$ is differentiable and such that $\sigma_0(t_0)^{\prime}\neq 0$.  Define $\sigma\in C(\mathbb{R}^2)$ by $(\alpha,t)\mapsto \sigma_{\alpha}(t)$.
Observe that the $\operatorname{ReLU}$ activation function falls into this class.
\end{example}
\begin{example}[{Super-Expressive Activation with Neuron-Specific Skip-Connection}]
\label{ex:Activation_Superexpressive}
We define the trainable variant of the activation function $\sigma:\mathbb{R}^2\to \mathbb{R}$ mapping any $(\alpha,t)\in \R^2$ to
\begin{equation}
\label{eq:supreexpressive}
    \sigma_{\alpha}(t) \eqdef 
    \alpha t + (1-\alpha)\,
    \begin{cases}
        | t \pmod{2}| & \mbox{ if } t \ge 0\\
        \frac{t}{|t|+1} & \mbox{ if } t<0
    \end{cases}
    .
\end{equation}
When $\alpha=0$, then $\sigma_0$ coincides with the super-expressive activation function of~\cite{zhang2022deep}.
The parameter $\alpha$ allows us to apply a skip connection at any given specific neuron.   
\end{example}

We now define the deep learning backbone of our neural operator model, namely the multilayer perceptrons (MLP). 
Fix input and output dimensions $n$ and $m$.
An MLP with (trainable) activation function $\sigma$, \textit{depth} $J\in\mathbb{N}_+$, and \textit{width} $W\in \mathbb{N}_+$  
is a map $f:\mathbb{R}^{n}\to \mathbb{R}^{m}$ with iterative representation,
\begin{equation}
    \label{eq_definition_ffNNrepresentation_function}
    \begin{aligned}
    \hat{f}_{\theta}(x) 
        & \eqdef
    x^{(J)}
   +c,
    \\
    x^{(j+1)} &\eqdef 
    A^{(j)} \sigma_{\alpha^{(j)}}
    \bullet(
        x^{(j)}
            +
        b^{(j)}),
    \\
    x^{(0)} &\eqdef  x.
    \end{aligned}
\end{equation}
where for $j=0,\dots,J-1$: $A^{(j)}\in \mathbb{R}^{d_{j+1}\times d_j}$, $\alpha^{(j)},b^{(j)}\in \mathbb{R}^{d_{j+1}}$, and $n=d_0,\dots,m=d_J\le W$.
Denote the set of MLPs mapping $\mathbb{R}^n$ to $\mathbb{R}^m$ with depth at-most $J$ and width at-most $W$ by $\mathcal{NN}_{J,W:n,m}$.  

\paragraph{The Attentional Neural Operator Model}

Neural operators (NOs) are natural, infinite dimensional extensions of classical (finite-dimensional) neural networks.  We follow the general encoder-processor-decoder NO paradigm considered, e.g.\ in PCA-nets \cite{lanthaler2023operator},~\cite{castro2023kolmogorov}, or~\cite{kratsiosuniversal_GDL_MaxPower}.  Our neural operators model, illustrated in Figure~\ref{fig:Attentional_NO}, maps inputs and output in the space structure $\mathcal{H}_T^2$, as opposed to standard neural operators which are maps functions on a Euclidean domain to functions in another Euclidean domain.

\begin{figure}[htp!]
    \centering
    \includegraphics[width=\linewidth]{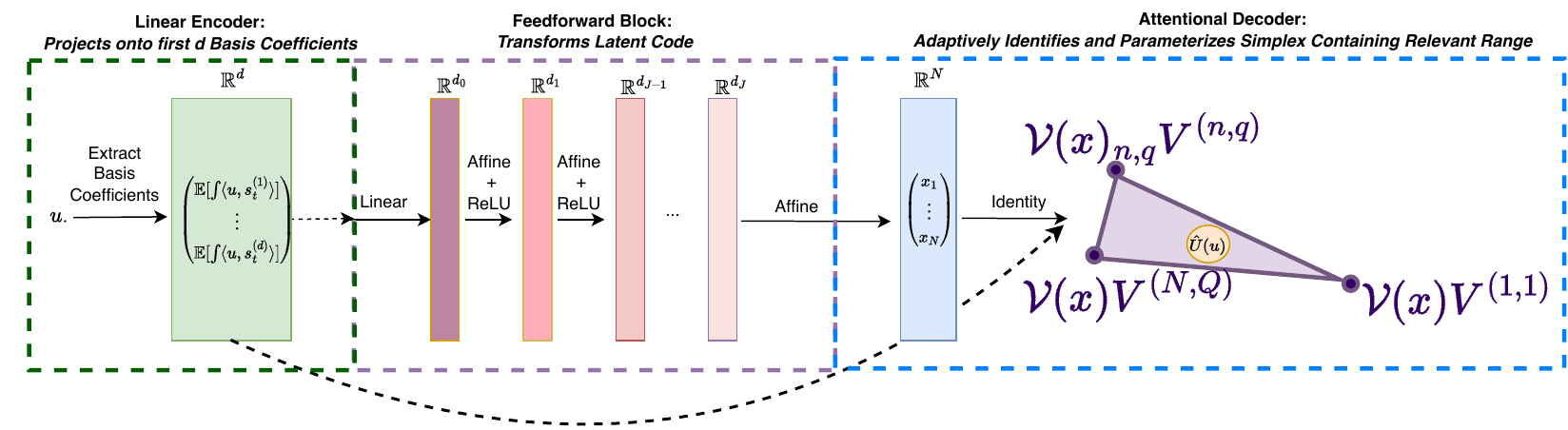}
    \caption{\textbf{Attentional Neural Operator Workflow:} 
    Our \textit{attentional} \textbf{n}eural \textbf{o}perator model maps controls $u_{\cdot}$ to square-integrable $\mathbb{F}$-adapted processes $\hat{U}(u_{\cdot})$ in three phases.  First, the (input) control is linearly projected onto the wavelet-like (in time) Wiener Chaos-like (in space) orthonormal basis of $\mathcal{H}_T^2$.  Next, the basis coefficients are transformed by a feedforward neural network (MLP).  Lastly, the basis coefficients are used to identify extremal points in a simplex in $\mathcal{H}_T^2$ and the outputs of the MLP are used to parameterize a prediction in its relative interior.}
    \label{fig:Attentional_NO}
\end{figure}

The neural operators in Figure~\ref{fig:Attentional_NO} can be formalized as follows.

\begin{definition}[Attentional Neural Operator]
\label{defn:NO}
Fix a trainable activation function $\sigma_{\cdot}\in C(\mathbb{R}^2)$, an encoding dimension $d\in \mathbb{N}_+$, a queries dimension $Q\in \mathbb{N}_+$, a values dimension $N\in \mathbb{N}_+$, depth and width parameters $J,W\in \mathbb{N}_+$, and an orthonormal basis $\mathcal{S}$ of $\mathcal{H}_T^2$.
\\
The set $\mathcal{NO}_{N,Q,d,J,W:\mathcal{S}}^{\sigma}$ consists of all (non-linear) operators $U:\mathcal{H}_T^2\to \mathcal{H}_T^2$ admitting the following representation: for each $u_{\cdot}\in \mathcal{H}_T^2$
\begin{align}
\nonumber
        U (u_{\cdot})
    \eqdef 
        \mathcal{D}\big(f\circ \mathcal{E}(u_{\cdot}),\mathcal{E}(u_{\cdot}) \big)
\\
\tag{encoder}
\label{eq:encoder}
        \mathcal{E}(u_{\cdot})
    \eqdef 
        \Biggl(
            \mathbb{E}\biggl[
                \int_0^T\,
                    \langle u_{t},s^{(i)}_t \rangle
                dt
            \biggr]
        \Biggr)_{i=1}^d
\\
\tag{$\mathcal{H}_T^2$-attentional decoder}
\label{eq:decoder}
        \mathcal{D}(w,x)
    \eqdef 
        \sum_{n=1}^N\, \operatorname{softmax}(w)_n \, \sum_{q=1}^Q\, 
        \mathcal{V}_{n,q}(x)
        \, V^{(n,q)}
,
\end{align}
where $\{s^{(i)}\}_{i=1}^{d},\{V^{(n,q)}\}_{n,q=1}^{N,Q}\subset \mathcal{S}$, $f\in \mathcal{NN}_{J,W:d,N}^{\sigma}$, $\mathcal{V}\in \mathcal{NN}_{J,W:d,N\times Q}$.
\hfill\\
The number of (non-zero trainable) parameters defining the neural operators $U$ is at most 
\[
    \underbrace{
        JW^2 
    }_{\text{MLP }(f)}
    + 
    \underbrace{
        NQ
    .
    }_{\text{MLP } (\mathcal{D})}
\]
The map $\mathcal{E}$ is called an encoder, $\mathcal{D}$ is called an attention-based decoder, $(\mathcal{V},\{V^{(n,q)}\}_{n,q=1}^{N,Q})$ are called values, and $U$ is called a attentional neural operator.
\end{definition}
We henceforth take $\mathcal{S}$ to be~\eqref{eq:simple_processes} and denote its elements $\{s^{(i)}\}_{i=1}^{\infty}$.  Thus, we write $\mathcal{NO}_{N,Q,d,J,W}^{\sigma}$ in place of $\mathcal{NO}_{N,Q,d,J,W:\mathcal{S}}^{\sigma}$.

\paragraph{Link Between Our Decoder, Attention, and Transformers}
Before moving on, we discuss the relationship between the decoder of our neural operator and the attention layer in \cite{bahdanau2014neural} in the standard transformer networks of \cite{vaswani2017attention}.  For simplicity, we focus on a single attention head, not the multi-head attention mechanism.

The standard attention mechanism $\operatorname{attention}$ maps $N$, $d$-dimensional vectors $x_1,\dots,x_N$, seen as a $N\times d$ matrix $x=(x_n)_{n=1}^d$ to another $N\times \tilde{d}$ matrix $\operatorname{attention}(x)$ for some $\tilde{d}\in \mathbb{N}$.  
This attention mechanism operates in two phases, first, it extracts \textit{contextual weights} $w^x$ in the $N$-simplex via
\begin{equation}
\label{eq:attention_weights}
        w^x
    \eqdef 
        \operatorname{softmax}\Big(
                \big(
                    \langle
                        x_nQ, x_jK
                    \rangle
                    /\sqrt{d}
                \big)_{j = 1}^{N}
            \Big)_{n=1}^N
\end{equation}
where the key and query matrices $K$ and $Q$ are $\tilde{d}\times d$ matrices.  
Using these weights, one defines a set of $N$ \textit{values} $v_1,\dots,v_N$, depending on the given input $x$, by 
\begin{equation}
\label{eq:attention_values}
    v_n^x\eqdef Vx_n
\end{equation}
for $n=1,\dots,N$.
The attention mechanism then uses the weights $w^x$ to tweak the contextual importance of each value $v_1^x,\dots,v_N^x$ when generating a weighted prediction by
\begin{equation}
\label{eq:attention_definition}
        \operatorname{attention}(x)
    \eqdef 
        \sum_{n=1}^N\,
            w^x_n
            v^x_n
.
\end{equation}
In this way, the standard attention mechanism parameterizes the interior of the convex hull of the values $v_1^x,\dots,v_N^x$ using the softmax weights of any input.

If we streamline the contextual weight extraction step in~\eqref{eq:attention_weights}, by simply an input weight $w\in \mathbb{R}^N$ which is mapped to contextual importance weights $w$ by only using the softmax function itself.  One can relax the dependence on the contextual values $v_1^x,\dots,v_N^x$ in~\eqref{eq:attention_values} to be vectors in the target space, i.e.\ $\mathbb{R}^{N\times \tilde{d}}$ for classical transformers and $\mathcal{H}_T^2$ for our neural operator, depending non-linearly on the given input.  Here, we parameterize this non-linear dependence using a values neural network $\mathcal{V}$ depending on the encoding instead of a values matrix $V$; doing so, one arrives at the following construction, which is precisely our \textit{$\mathcal{H}_T^2$-attention}
\begin{equation*}
        \mathcal{D}(w)
    \eqdef 
        \sum_{n=1}^N\, 
            \underbrace{
                \operatorname{softmax}(w)_n 
            }_{\text{Contextual Weights}~\eqref{eq:attention_weights}}
        \, 
        \underbrace{
            \sum_{q=1}^Q\, 
            \mathcal{V}}_{n,q}(u_{\cdot})\, 
            V^{(n,q)
        }_{\text{Contextual Values}~\eqref{eq:attention_values}}
.
\end{equation*}
Thus, one can interpret our $\mathcal{H}_T^2$-attention as an infinite-dimensional analogue of the standard attention mechanism of \cite{bahdanau2014neural}.  Note that, since $\mathcal{D}$ receives inputs from an MLP and since MLPs can approximately implement continuous functions then there is no need to rely on a vector of inner-products such as $(\langle Q x_n, Kx_j/\sqrt{d})_{j=1}^N$, in~\eqref{eq:attention_weights}, since that can be approximately implemented by the MLP in principle; by the universal approximation theorem.

Instead, we use the inner products to obtain low-dimensional approximate representations of any given input (control) in our encoding layer~\eqref{eq:encoder}.  
Thus, one can view our neural operator as an infinite-dimensional take on the transformer network model.

\section{Main Results}
\label{s:Main_Results}

Our first result shows that that neural operators are rich enough to approximately minimize the response functional of the follower, to arbitrary precision on any given compact set of actions which the follower can take.

\begin{theorem}[$\varepsilon$-Optimal Response Operators]
\label{thrm:Main__BestResponse}
Under Assumptions~\ref{assm:LipfsigLg} and \ref{assm:continuity}, 
for each compact $\kkk_0\subseteq \uuu_0$, and each $\varepsilon>0$ there is an encoding dimension $d\in \mathbb{N}_+$ and a neural operator $\hat{U}\in \mathcal{NO}:\uuu_0\to\uuu_1$ satisfying 
\begin{equation}
\label{eq:thrm:Main__ResponseEquilibria}
    \sup_{u^0\in \kkk_0}\,
        \|
                U^{\star}(u^0)
            -
                \hat{U}(u^0)
        \|_{\mathcal{H}_T^2}
    \le 
        \varepsilon
    .
\end{equation}
\end{theorem}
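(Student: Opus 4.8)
The plan is to combine the two ``halves'' already advertised in the introduction: (i) Assumption~\ref{assm:continuity} guarantees a H\"{o}lder-continuous best-response map $U^\star:\kkk_0\to\uuu_1$, and the sets involved are nice enough (the domain $\kkk_0$ is compact in the separable Hilbert space $\uuu_0=\mathcal{H}^2_T$) that a quantitative universal approximation theorem for neural operators applies; (ii) that quantitative theorem is exactly Theorem~\ref{thrm:UniversalApprox}, which asserts that H\"{o}lder-continuous nonlinear operators between spaces of square-integrable $\mathbb{F}$-adapted processes can be approximated by members of $\mathcal{NO}^\sigma_{N,Q,d,J,W}$ uniformly on compacts. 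So the bulk of the argument is really a reduction: show that $U^\star$ restricted to $\kkk_0$ is precisely the kind of object Theorem~\ref{thrm:UniversalApprox} handles, then invoke it with error tolerance $\varepsilon$.

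Concretely, first I would fix the compact set $\kkk_0\subseteq\uuu_0$ and $\varepsilon>0$. By Assumption~\ref{assm:continuity}, there is a H\"{o}lder-continuous map $U^\star:\kkk_0\to\uuu_1\subseteq\mathcal{H}^2_T$ with $U^\star(u^0)\in\mathcal{R}(u^0)$; let $\alpha\in(0,1]$ and $L\geq0$ be its H\"{o}lder exponent and constant. Since $\kkk_0$ is compact, $U^\star(\kkk_0)$ is a compact (hence bounded) subset of $\mathcal{H}^2_T$. Next, because $\mathcal{H}^2_T$ is a separable Hilbert space and $\mathcal{S}$ from~\eqref{eq:simple_processes} is an orthonormal basis (Lemma~\ref{lem:Orthonormal}), the encoder $\mathcal{E}$ in~\eqref{eq:encoder} is, for $d$ large, an arbitrarily accurate isometric embedding of $\kkk_0$ into $\mathbb{R}^d$: the truncation error $\sup_{u^0\in\kkk_0}\|u^0-\sum_{i=1}^d \mathcal{E}(u^0)_i s^{(i)}\|_{\mathcal{H}^2_T}$ tends to $0$ as $d\to\infty$ by compactness (finitely many basis vectors capture the tail uniformly on a compact set). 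Then I would apply Theorem~\ref{thrm:UniversalApprox} to the operator $U^\star$, the compact set $\kkk_0$, and tolerance $\varepsilon$, which yields an encoding dimension $d$, queries/values dimensions $Q,N$, depth/width $J,W$, and a neural operator $\hat U\in\mathcal{NO}^\sigma_{N,Q,d,J,W}$ with $\sup_{u^0\in\kkk_0}\|U^\star(u^0)-\hat U(u^0)\|_{\mathcal{H}^2_T}\leq\varepsilon$. Finally, I would note that $\hat U$, as an element of $\mathcal{NO}$, is defined on all of $\mathcal{H}^2_T=\uuu_0$ and takes values in $\mathcal{H}^2_T=\uuu_1$, so it is a legitimate member of $\mathcal{NO}:\uuu_0\to\uuu_1$, giving~\eqref{eq:thrm:Main__ResponseEquilibria}.

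The main obstacle — or rather, the only nontrivial content not simply inherited from the cited results — is verifying that the hypotheses of Theorem~\ref{thrm:UniversalApprox} are genuinely met, in particular that the target of $U^\star$ really is (a subset of) the same space $\mathcal{H}^2_T$ that the neural-operator decoder~\eqref{eq:decoder} outputs into. Here $\uuu_1$ is the space of square-integrable $\mathbb{F}$-adapted processes, whereas the orthonormal basis $\mathcal{S}$ and hence the decoder live in $\mathcal{H}^2_T$, the space of square-integrable $\mathbb{F}$-\emph{predictable} processes; one must recall that for a Brownian filtration the predictable and progressively-measurable/adapted $L^2$ processes coincide up to modification (predictable $\sigma$-algebra arguments, or density of simple predictable processes), so $\uuu_1$ and $\mathcal{H}^2_T$ may be identified and no mismatch arises. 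A secondary, but purely bookkeeping, point is that Assumption~\ref{assm:continuity} only asserts H\"{o}lder continuity without a named exponent, so one should make explicit that some $\alpha\in(0,1]$ and $L\geq0$ exist and feed those into Theorem~\ref{thrm:UniversalApprox} (which, per the remark after Assumption~\ref{assm:continuity}, is precisely where the doubling-metric/snowflake machinery of~\cite{kratsiosuniversal} requires H\"{o}lder rather than mere continuity). Assumption~\ref{assm:LipfsigLg} plays no direct role in this particular theorem beyond having been used upstream to ensure $X$, $J_1$, and hence $\mathcal{R}(\cdot)$ are well defined; it is listed for completeness and to make the statement self-contained.
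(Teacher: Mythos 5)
Your proposal is correct and follows essentially the same route as the paper: under Assumptions~\ref{assm:LipfsigLg}--\ref{assm:continuity}, the best-response map $U^\star$ is a continuous (indeed H\"{o}lder) operator on the compact set $\kkk_0\subset\mathcal{H}_T^2$, and Theorem~\ref{thrm:UniversalApprox} (equivalently, its quantitative form Lemma~\ref{lem:UAT_1}, which is what the paper invokes inside its joint proof of Theorems~\ref{thrm:Main__BestResponse},~\ref{thrm:Objective}, and~\ref{thrm:Main__BestResponse___goodrates}) then immediately yields the desired uniform $\varepsilon$-approximation by an attentional neural operator. Your remark about identifying square-integrable adapted and predictable processes under the Brownian filtration is a point the paper implicitly uses but does not flag, and the extra discussion of the encoder truncation step is already subsumed in the cited UAT's proof, so neither affects correctness.
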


\subsection{An Unsupervised Objective Function - Bypassing Computing \texorpdfstring{$U^{\star}$}{U}}
Theorem~\ref{thrm:Main__BestResponse}
alone does not guarantee that approximations produce Stackelberg equilibria.  Our second main result guarantees that approximately playing any such Stackelberg games is enough to yield approximate Stackelberg equilibria. 

Additionally, theorem~\ref{thrm:Main__BestResponse}
guarantees that the solution operator to the Stackelberg game can be approximately implemented on compact sets, to arbitrary precision.  
However, it does not describe how one could train such a network in practice.  Indeed it seems most natural to minimize the leader's loss $J_0$ for any given $u^0$ in the relevant compact set $\kkk_0$.  However, $u^0$ may not be \textit{exactly implementable} in practice, instead one could consider minimizing $J_0$ where $u^0$ is replaced by a finite-dimensional (e.g.\ linear) approximation $\hat{u}^0\eqdef p_d(u^0)$ where $p_d:\mathcal{H}_T^2\to \operatorname{span}\{s_i\}_{i=1}^d$ is the orthogonal projection; i.e.\ $p_d\big(
\sum_{i=1}^{\infty}\,\beta_is_i \big)= \sum_{i=1}^{d}\,\beta_is_i$ for all $u=\sum_{i=1}^{\infty}\,\beta_i\,s_i\in \mathcal{H}_T^2$.  
Thus, a natural and tractable objective would be to minimize
\begin{equation}
\label{eq:loss_to_minimize__U}
    {
    \underset{U}{
    \operatorname{min}
    }
    }
    \min_{\hat{u}^0_d \in p_d(\kkk_0)}
        J_{{0}}\big(
            \hat{u}^0_d
        ,
            U(
            \hat{u}^0_d
            )
        \big)
\end{equation}
when training the attentional neural operator
{where $\hat{U}$ is minimized over a compact class of neural operators.  Once a minimizer $\hat{U}$ of~\eqref{eq:loss_to_minimize__U} is identified, it can then be used to approximate the optimal action of the leader by minimizing the following objective 
\begin{equation}
\label{eq:loss_to_minimize}
    \min_{\hat{u}^0_d \in p_d(\kkk_0)}
        J_{{0}}\big(
            \hat{u}^0_d
        ,
            \hat{U}(
            \hat{u}^0_d
            )
        \big)
\end{equation}
}.

Our following result suggests that~\eqref{eq:loss_to_minimize} can be used, {after training $\hat{U}$}, as an \textit{unsupervised} objective function, which is small only when $\hat{U}$ is has correctly approximated to optimal response $U^{\star}$.  Unlike the supersized criterion in~\eqref{eq:thrm:Main__ResponseEquilibria}, which requires us to knowing pairs of $u^0$ and best responses $U^{\star}(u^0)$,~\eqref{eq:thrm:Main__ResponseEquilibria} can be minimized without having to first compute $U^{\star}$.  Moreover, $\hat{U}$ is only optimized on a finite-dimensional subspace of our space of controls.

\begin{theorem}[The Unsupervised Objective Function~\eqref{eq:loss_to_minimize} Detects Optimality]
\label{thrm:Objective}
The following hold in the setting of Theorem~\ref{thrm:Main__BestResponse}.
\begin{enumerate}
    \item[(i)] For every $\delta>0$ there exist $\epsilon>0$, and $d\in \mathbb{N}_+$ such that if $\hat{U}$ satisfies~\eqref{eq:thrm:Main__ResponseEquilibria}, then the following hold
\begin{equation}
\label{eq:thrm:Objective}
    \sup_{u^0\in \kkk_0}
    \,
        \big|
                J_{{0}}(u^0,U^{\star}(u^0))
            -
                J_{{0}}\big(
                    \hat{u}^0_d
                ,
                    \hat{U}(
                    \hat{u}^0_d
                    )
                \big)
        \big|
    <
        \delta
.
\end{equation}

    \item[(ii)] Moreover, there is a $\hat{u}^0_d = \sum_{i=1}^d \, \beta_i \, s_i\in \kkk_0$ such that the pair $(\hat{u}_d^0,\hat{U}(\hat{u}_d^0))$ is an $\varepsilon$-Stackelberg equilibrium
    in the sense that the pair $(\hat{u}^0_d , \hat U)\in \kkk_0\times \bar \uuu_1 $ satisfies for all $(u^0,u^1)\in\kkk_0\times \uuu_1$ the inequalities
\begin{align*}
    J_1(u^0,\hat U(u^0))&\leq J_1(u^0,u^1)+\epsilon\\
    J_0(\hat{u}^0_d,\hat U(\hat{u}^0_d))&\leq  J_0(u_0,\hat U(u_0))+\epsilon.
\end{align*}
\end{enumerate}

\end{theorem}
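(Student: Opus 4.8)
The plan is to derive Theorem~\ref{thrm:Objective} as a consequence of Theorem~\ref{thrm:Main__BestResponse} together with the stability (Lipschitz/Hölder continuity) of the cost functionals $J_0, J_1$ in their arguments, which in turn follows from the Lipschitz regularity hypotheses in Assumption~\ref{assm:LipfsigLg} via standard SDE estimates (Gronwall). The first step is to record the key stability lemma: under Assumption~\ref{assm:LipfsigLg}, the map $(u^0,u^1)\mapsto X^{u^0,u^1}$ from $\uuu_0\times\uuu_1$ into $L^2$-processes is Lipschitz in the $\mathcal{H}_T^2$-norms, and hence (composing with the Lipschitz $L_i, g_i$) both $J_0$ and $J_1$ are Lipschitz continuous on bounded subsets of $\uuu_0\times\uuu_1$. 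Since $\kkk_0$ is compact and $U^\star(\kkk_0)$ is compact (image of a compact set under the continuous $U^\star$ from Assumption~\ref{assm:continuity}), and since the $\hat U(\hat u^0_d)$ produced in Theorem~\ref{thrm:Main__BestResponse} lie within an $\varepsilon$-neighborhood of that compact set, everything takes place on a bounded region where a uniform Lipschitz constant $L$ for $J_0$ (in both arguments jointly) is available.

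For part (i): fix $\delta>0$. Pick $\varepsilon>0$ small (to be tuned) and invoke Theorem~\ref{thrm:Main__BestResponse} to obtain $d\in\mathbb{N}_+$ and $\hat U\in\mathcal{NO}$ with $\sup_{u^0\in\kkk_0}\|U^\star(u^0)-\hat U(u^0)\|_{\mathcal{H}_T^2}\le\varepsilon$. Then estimate, for any $u^0\in\kkk_0$ with $\hat u^0_d = p_d(u^0)$,
\[
\big|J_0(u^0,U^\star(u^0)) - J_0(\hat u^0_d,\hat U(\hat u^0_d))\big|
\le
\big|J_0(u^0,U^\star(u^0)) - J_0(\hat u^0_d,U^\star(\hat u^0_d))\big|
+
\big|J_0(\hat u^0_d,U^\star(\hat u^0_d)) - J_0(\hat u^0_d,\hat U(\hat u^0_d))\big|.
\]
The second term is $\le L\,\|U^\star(\hat u^0_d)-\hat U(\hat u^0_d)\|_{\mathcal{H}_T^2}\le L\varepsilon$ by the stability lemma and the approximation bound (applied at $\hat u^0_d\in\kkk_0$, which requires that $p_d$ maps $\kkk_0$ into $\kkk_0$, or else one enlarges $\kkk_0$ to a compact convex set containing its projections — I would add a sentence handling this). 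The first term needs two sub-estimates: $\|u^0-\hat u^0_d\|_{\mathcal{H}_T^2}=\|u^0 - p_d(u^0)\|_{\mathcal{H}_T^2}\to 0$ uniformly over the compact set $\kkk_0$ as $d\to\infty$ (since $p_d$ converges strongly to the identity and the convergence is uniform on compacta), and $\|U^\star(u^0)-U^\star(\hat u^0_d)\|_{\mathcal{H}_T^2}$ is controlled by the Hölder modulus of $U^\star$ applied to $\|u^0-\hat u^0_d\|_{\mathcal{H}_T^2}$, hence also uniformly small for $d$ large. Thus choosing first $\varepsilon < \delta/(4L)$ and then $d$ large enough that the first term is $<\delta/2$ uniformly, and taking the sup over $u^0\in\kkk_0$, gives~\eqref{eq:thrm:Objective}.

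For part (ii): I would take $\hat u^0_d$ to be the (approximate) minimizer over the finite-dimensional compact set $p_d(\kkk_0)$ of the objective~\eqref{eq:loss_to_minimize}, i.e.\ of $\hat u \mapsto J_0(\hat u,\hat U(\hat u))$. The first inequality $J_1(u^0,\hat U(u^0))\le J_1(u^0,u^1)+\epsilon$ for all $u^0\in\kkk_0,u^1\in\uuu_1$ follows because $U^\star(u^0)\in\mathcal{R}(u^0)$ is the true best response (so $J_1(u^0,U^\star(u^0))\le J_1(u^0,u^1)$) combined with the $J_1$-stability estimate $|J_1(u^0,\hat U(u^0))-J_1(u^0,U^\star(u^0))|\le L'\varepsilon$; pick $\varepsilon$ also $\le \epsilon/L'$. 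For the second (leader) inequality, one compares $J_0(\hat u^0_d,\hat U(\hat u^0_d))$ — which is within $\delta$ of $\inf_{u^0\in\kkk_0}J_0(u^0,U^\star(u^0))$ by part (i) and near-optimality of $\hat u^0_d$ — with $J_0(u_0,\hat U(u_0))$ for arbitrary $u_0\in\kkk_0$, which by the $J_0$-stability estimate is within $L\varepsilon$ of $J_0(u_0,U^\star(u_0))\ge \inf_{u^0}J_0(u^0,U^\star(u^0))$; assembling these and absorbing all errors into $\epsilon$ finishes it.

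The main obstacle I anticipate is bookkeeping the interplay of the three small quantities ($\varepsilon$ from neural-operator approximation, the projection error $\|u^0-p_d(u^0)\|$, and the Hölder modulus of $U^\star$) so that they can all be made simultaneously small by a single choice of $d$ and a single choice of the target accuracy in Theorem~\ref{thrm:Main__BestResponse} — and, relatedly, the technical point that $p_d(\kkk_0)$ need not be a subset of $\kkk_0$, so one must either assume $\kkk_0$ is invariant under the projections $p_d$ (true, e.g., for the finite-dimensional $\kkk_0$ of Example~\ref{ex:finite_controls} only after enlargement) or, more cleanly, prove everything on a slightly enlarged compact set $\tilde\kkk_0\supseteq \bigcup_d p_d(\kkk_0)\cup\kkk_0$ and note that $U^\star$ and $\hat U$ extend there. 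The stability estimates themselves are routine Gronwall arguments (and are presumably the content of Lemma~\ref{lm:stabilityJ0} referenced in the introduction), so I would simply cite them.
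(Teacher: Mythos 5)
Your proposal follows essentially the same route as the paper: the identical two-term decomposition of $\big|J_0(u^0,U^\star(u^0))-J_0(\hat u^0_d,\hat U(\hat u^0_d))\big|$, with term (I) controlled by the H\"older modulus of $U^\star$ and the Lipschitz stability of $J_0$ (the paper packages exactly this as Lemma~\ref{lm:stabilityJ0}) together with uniform projection convergence on $\kkk_0$, and term (II) controlled by the Lipschitz estimate for $J_0$ in the follower's control combined with the neural-operator approximation error. Two of your side remarks are worth noting: your concern about whether $p_d(\kkk_0)\subseteq\kkk_0$ is legitimate and the paper in fact handles it implicitly by re-invoking the universal approximation theorem directly over $p_d(\kkk_0)$ rather than relying on $\hat U$ as given by~\eqref{eq:thrm:Main__ResponseEquilibria}; and your explicit verification of the follower's inequality $J_1(u^0,\hat U(u^0))\le J_1(u^0,u^1)+\epsilon$ via the $J_1$-stability estimate is a step the paper's written proof elides but should include.
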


As with classical uniform approximation results in deep learning, we worked on a compact in the space of square-integrable $\mathbb{F}$-predictable processes.  Though this is standard in the approximation theory literature, it is not so in game theory.  
However, reducing the problem to compact $\kkk_0$ is asymptotically coherent in the sense that if we take compact sets $\kkk_n\subset \uuu_0$ so that $\cup_n \kkk_n=\uuu_0$. Then, for any $\delta_n\downarrow0$ and $( \hat{u}^n_d,\hat{U}^n)$ satisfying \eqref{eq:thrm:Objective} with $\delta_n$, 
we have that $$ \inf_{u^0\in \uuu_0}J_{{0}}(u^0,U^{\star}(u^0))=\lim_{n\to \infty}J_{{0}}\big(
                    \hat{u}^n_d
                ,
                    \hat{U}^n(
                    \hat{u}^n_d
                    )).$$
Thus, by taking the compact set $\kkk_n$ and the NO larger, we approximate the optimum of the effective value of the leader $\inf_{u^0\in \uuu_0}J_{{0}}(u^0,U^{\star}(u^0))$. Note that if $\uuu_0$ is not compact or if the problem of the leader does not have additional properties such as convexity {or coercivity of $u^0\mapsto J_{{0}}(u^0,U^{\star}(u^0))$, see e.g.~\cite[Theorem 7.12]{dal2012introduction}}, one cannot easily claim the existence of the optimizer of $\inf_{u^0\in \uuu_0}J_{{0}}(u^0,U^{\star}(u^0))$ or the convergence of the family $\big(
                    \hat{u}^n_d
                ,
                    \hat{U}^n).$                  
However, these additional structural assumptions are not needed for the purposes of computationally approximating the optimal value of the leader. 
%


More insight can be gained into the inner workings of these results by inspecting the steps in their derivations, at least at a high level.  The next section shows the overarching structure of an argument which one can use to derive any such result.

\subsection{Rates For Perturbations of Closed-Loop Solutions to Linearized Game}
\label{s:EfficientRates}

This section shows that, contrary to the general approximation rates for neural operators (see e.g.~\cite{lanthaler2022error} or~\cite{galimberti2022designing}), which may be highly inefficient due to the infinite-dimensional nature of the involved spaces, there are stylized conditions which allow for efficient approximation rates of the optimal response map by our attentional neural operator. 

Building on the strategy of \cite{shi2023deep}, 
we focus on the case where the leader begins by considering a proxy/ansatz/pre-trained  control, $\bar{u}
$ 
Suppose that $\bar{u}_0 \in \mathcal{H}_T^2$ is a control for the \textit{leader}.  Suppose that the leader is comfortable deviating from their strategy to an open-loop control, but only marginally through ``small residual perturbations''.  
If those deviations are not too large, efficient approximation rates are possible.  
More precisely, consider the following situation.
\begin{assumption}[Perturbations of an Ansatz]
\label{ass:efficient_rates}
Let $C\ge 0$, $r> 0$, $\bar{u}\in \mathcal{H}_T^2$, and suppose that Assumption~\ref{assm:continuity} holds.  
Let $\kkk_0$ consist of all $u\in \mathcal{H}_T^2$ for which: 
\begin{enumerate}
    \item[(i)] $|\langle u-\bar{u},s_i\rangle | \le C \, e^{-ri}$,
    \item[(ii)] $|\langle U^{\star}(u-\bar{u}),s_i\rangle | \le C \, e^{-ri}$.
\end{enumerate}
\end{assumption}

\begin{remark}
\label{rem:non-vaccousness}
Note that, for every $C,r\ge 0$, $\bar{u}\in \mathcal{H}_T^2$ the set $\kkk_0$ is non-empty since $\bar{u}\in \kkk_0$.
\end{remark}
In both of the following examples, we consider the sets of ``residual controls''.
Let $C,r>0$ and define $B_{C,r}(u)$ to be the ``exponentially ellipsoidal'' set of \textit{open-loop} ``residual controls'' relative to a control $\bar{u} \in \mathcal{H}_T^2$ to consist of all {$u\in \mathcal{H}_T^2$}
such that
$v \eqdef u-\bar{u}
=
\sum_{i=1}^{\infty}\, \beta_i\, s_i\in \mathcal{H}_T^2$ satisfies
\begin{equation}
\label{eq:open_loop_perturbations}
\begin{aligned}
        \max\{|\langle v,s_i\rangle |,|\langle U^*(
        {v}
        ),s_i\rangle |\}
        \le C\, e^{-ri}
        \,\,(\forall i \in \mathbb{N}_+)
.
\end{aligned}
\end{equation}

Illustrated by Figure~\ref{fig:perturbations__lin_prob_control}, our primary example of a compact set satisfying the perturbation of the Ansatz assumption above is given by first solving a linear-quadratic proxy of the general (non-linear) Stackelberg game.  Then, once an optimal \textit{feedback} control for the leader is derived, for the linear-quadratic proxy of our (non-linear) Stackelberg game, we build $\kkk_0$ by adding residual open-loop controls in $B_{C,r}$. 
Intuitively, these residual loops provide added freedom to the closed-loop control optimizing the linear-quadratic proxy required when playing the Stackelberg game.

\begin{figure}[H]
\begin{minipage}{0.35\textwidth}
\centering
\includegraphics[width=1\linewidth]{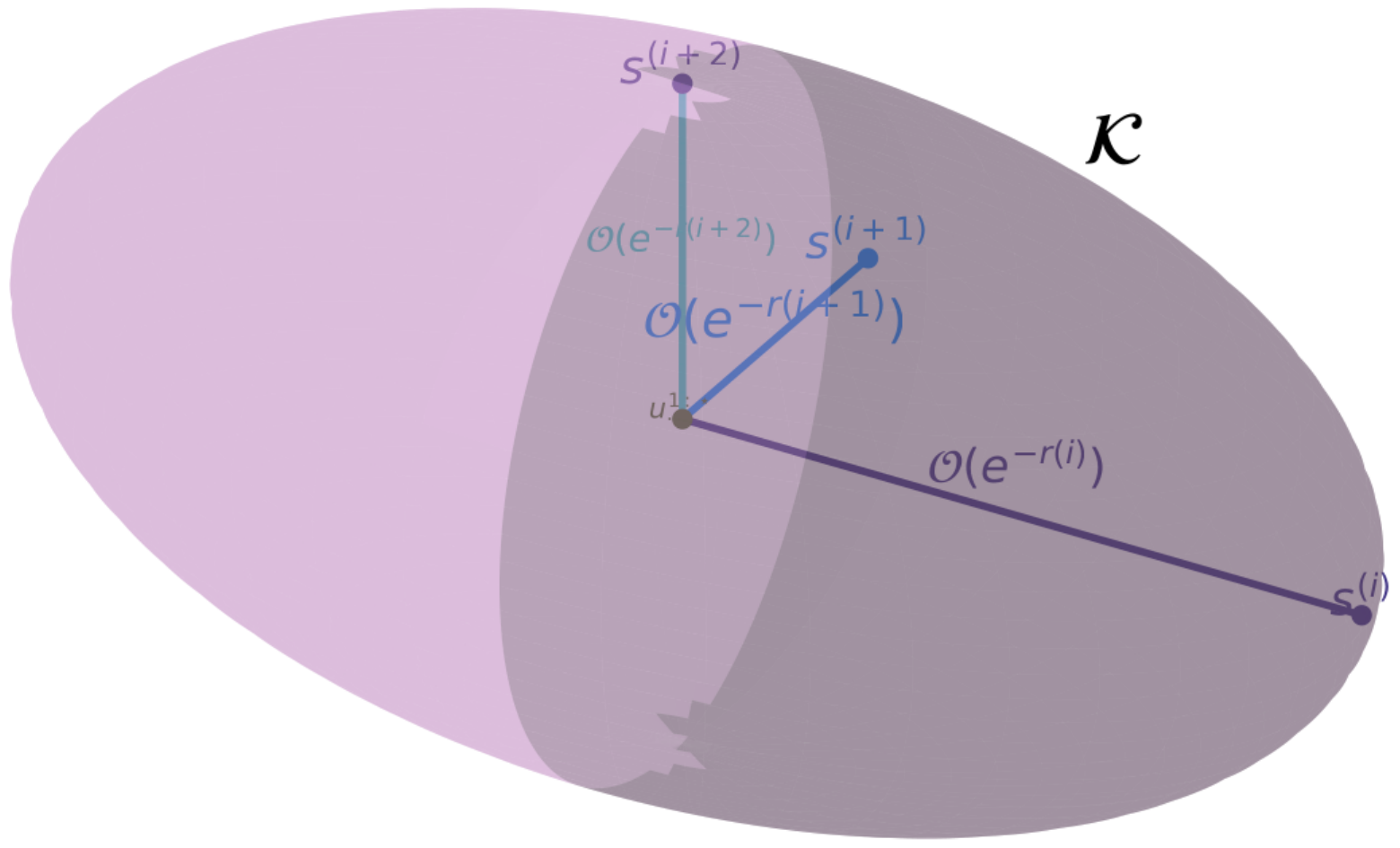}
\end{minipage} \hfill
\begin{minipage}{0.65\textwidth}
\textbf{Explanation of Figure:}
The set $\kkk_0$ in Example~\ref{ex:perturbations__lin_prob_control} is a (compact) ellipsoidal region in $\mathcal{H}_T^2$ consisting of open-loop controls $u$ which are a small perturbation of a base strategy/control $u^{1:\star}$.  The base strategy $u^{1:\star}$ is the solution to a linearization, see~\eqref{eq:linearized_Dynamics}-\eqref{eq:linearized_Cost}, of the dynamic Stackelberg game. The perturbations $u$ of $u^{1:\star}$ in $\kkk_0$ are built by adding a small residual term in each of the basic directions $s^{(i)}\in \mathcal{S}$ where we allow for possibly large perturbations of the linearized strategy $u^{1:\star}$ for the ``low-frequency directions'' (i.e.\ for small values of $i$) and much smaller perturbations of the linearized strategy for ``high-frequency directions'' (i.e.\ for small $i$).  The key subtlety is that the size of the perturbations must decay exponentially in $i$.
\end{minipage}
\caption{The ellipsoidal compact set $K$ of Example~\ref{ex:perturbations__lin_prob_control}.}
\label{fig:perturbations__lin_prob_control}
\end{figure}


\begin{example}[Perturbations of Feedback Control For Linearized Problem - Pt.\ I]
\label{ex:perturbations__lin_prob_control}
Consider a finite subset $\{(x_n,v_n^0,v_n^1)\}_{n=1}^N\subset \mathbb{R}^d$ and let $A,B_1,B_2,C,D_1,D_2$ be matrices minimizing the following MSE problem over all matrices of compatible dimension
\[
\sum_{n=1}^N\, 
    \|
        f(x_n,v_n^0,v^1_n)
        -
        \underbrace{
            (Ax_n + B_1v_n^0 + B_2v^1)
        }_{\text{lin. approx. drift}}
    \|^2
    +
    \|
        \sigma(x_n,v_n^0,v_n^1)
        -
        \underbrace{
            Cx_n + D_1v_n^0 + D_2v_n^1
        }_{\text{lin. approx. diff.}}
    \|^2
.
\]
Consider the (controlled) ``linearized'' state-space process $X_{\cdot}^{lin}\eqdef (X_t^{lin})_{t\ge 0}$ given by
\begin{equation}
\label{eq:linearized_Dynamics}
      dX_t^{lin}
    =
        (AX_t^{lin} + B_1u_t^0 + B_2u_t^1)dt
    +
        (CX_t^{lin} + D_1u_t^0 + D_2u_t^1)dW_t
\end{equation}
Further, assume that $Q_1,Q_2,R_1,R_2,G_1,G_2$ are 
matrices minimizing the following MSE problem over all matrices of compatible dimension
\[
\sum_{n=1}^N\, 
\sum_{i=1}^2\,
    \|
        L_i(x_n,v^0_n,v^1_n)
        -
        \underbrace{
        \big(
            (Q_ix_n)^{\top}x_n
            +
            (R_iv_n^i)^{\top}v_n^i
        \big)
        }_{\text{lin. approx. running cost}}
    \|^2
    +
    \|
        \underbrace{
            g_i(x_n)
        -
            (G_ix_n)^{\top}x_n
        }_{\text{lin. approx. terminal}}
    \|^2
.
\]
Under \citep[Assumptions (DI) and (H1)1035]{yong2002leader} the computations on~\citep[ages 1034 and 1035]{yong2002leader} together with~\citep[Proposition 2.2 and Theorem 2.]{yong2002leader}, they imply that there is an optimal control $u^{0:\star}$ for the leader minimizing the approximate objective function
\begin{equation}
\label{eq:linearized_Cost}
    \mathbb{E}\biggl[
        \int_0^T
        \,
            L_0(X_t,u_t^0,u_t^1)
        \,dt 
        + 
            (G_iX_T)^{\top}X_T
    \biggr]
\end{equation}
across all controls in $\mathcal{H}_T^2$.  Moreover, as shown in~\citep[Equastion (5.12)]{yong2002leader}, $u^{1:\star}$ is given by
\[
        u_t^{1:\star}
    = 
        -R_1^{-1}B_1^{\top}\,
            p_t
    ,
\]
where $(p_t,q_t)_{t\ge 0}$ solve the FBSDE
\[
\begin{aligned}
dp_t & = p
(A^{\top}\,p_ +Q_0X_t)
dt
+ 
q_t\,dW_t
\\
p_T & = G_0X_T.
\end{aligned}
\]
Let $\kkk_0$ be the set of controls for the leader $u\in \mathcal{H}_T^2$ for which there exists some open-loop ``residual perturbations'' $v \in B_{C,r}(u^{1:\star})$ such that
\[
    u = u^{1:\star} + v
.
\]
By construction, $\kkk_0$ satisfies Assumption~\ref{ass:efficient_rates}.
\end{example}

The set of open-loop perturbations of the closed-loop optimal control for the linearized game, just described in Example~\ref{ex:perturbations__lin_prob_control}, can be efficiently represented using few dimensions, and likewise for its image under the optimal response map $U^{\star}$.  
Nevertheless, the set need not be coverable by few controls; that is, it may still have high metric entropy (see e.g.~\cite{LorentzEntropApprox} for an expos\'{e}e).  We, therefore, adopt a technique used in statistical learning/empirical process theory to construct classes of VC-dimension proportional to the metric entropy of high-dimensional balls (see e.g.~\cite[Theorem 2.7.11]{varderVaartWellner_EmpiricalProcessesBook_1996}).  Namely, we postulate the existence of a latent unknown Lipschitz parameterization of a latent low-dimensional ``manifold'' of the set of open-loop controls $\kkk_0$ constructed in Example~\ref{ex:perturbations__lin_prob_control}.

\begin{example}[Perturbations of Feedback Control For Linearized Problem - Pt.\ II]
\label{ex:perturbations__lin_prob_control__Pt2}
Fix a latent dimension $d\in \mathbb{N}_+$, a latent parameter space $B_d\eqdef \{x\in \mathbb{R}^d:\,\|x\|\le 1\}$, and a latent parameterization given by a $1$-Lipschitz map $\pi:\mathbb{R}^d\to \kkk_0$.
with the property that $\pi(0)=u^{1:\star}$; i.e.\ parameterizing a latent low-dimensional structure which perturbs the optimal control for the linear-quadratic approximation of the general Stackelberg games.  
Let $K_{d,\pi}\eqdef \pi(B_d)$.
By construction $K_{d,\pi}\subseteq \kkk_0$; thus, $K_{d,\pi}$ also satisfies Assumption~\ref{ass:efficient_rates}.
Finally, by~\citep[Theorem 2.7.11]{varderVaartWellner_EmpiricalProcessesBook_1996}\footnote{We have used the upper-bound of the $\varepsilon$-bracketing numbers on the $\varepsilon/2$-covering numbers (see e.g.~\cite[page 84]{varderVaartWellner_EmpiricalProcessesBook_1996})} and \cite[Proposition 15.1.3]{lorentz1996constructive}, we have that: for each $\varepsilon>0$ there exists at-most $N\le 3^d\,\big(\sqrt{d}2/\varepsilon \big)^d$ controls $\{u^{(n)}\}_{n=1}^N$ forming an $\varepsilon$-cover of $K_{d,\pi}$; that is, 
\begin{equation}
\label{eq:metric_entropy_bound}
        \underset{u_{\cdot}\in K_{d,\pi}}{\sup}
        \,
        \underset{n=1,\dots,N}{\min}
        \,
            \mathbb{E}\biggl[
                \int_0^T\, 
                    \|
                        v_t - u_t
                    \|^2
                dt
            \biggr]
        \le 
            \varepsilon
.
\end{equation}
In other words, if the latent dimension $d$ is ``small'', then $K_{d,\pi}$ is small in metric entropy.  In turn, the parameter $N$ in our transformer can be taken to also be proportionally small (see Theorem~\ref{thrm:Main__BestResponse___goodrates}).
\end{example}

When the conditions of 
Assumption~\ref{ass:efficient_rates} are met, we are able to guarantee that the approximating attention neural operator $\hat{U}$ to the best-response map $U^{\star}$, given by Theorem~\ref{thrm:Main__BestResponse}, is determined by relatively few parameters.  This is the content of our last main result.

\begin{theorem}[Efficient Approximation of the Best-Response Map]
\label{thrm:Main__BestResponse___goodrates}
Consider the setting of Theorem~\ref{thrm:Main__BestResponse} and additionally suppose that $\kkk_0$ satisfies Assumption~\ref{ass:efficient_rates}.
For every $\varepsilon>0$, the conclusion of Theorem~\ref{thrm:Main__BestResponse} holds and there exists a neural operator $\hat{U}\in \mathcal{NO}:\uuu_0\to\uuu_1$ satisfying the uniform estimate in~\eqref{eq:thrm:Main__ResponseEquilibria} whose depth, width, decoding dimension, encoding dimension, and attentional complexity are bound above by the estimates in Table~\ref{tab:Complexity_Estimates_for__perturbations__lin_prob_control__Pt2}.
\end{theorem}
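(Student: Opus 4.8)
The plan is to apply the quantitative universal approximation theorem, Theorem~\ref{thrm:UniversalApprox}, to the best-response operator $U^{\star}$ restricted to the compact set $\kkk_0$ of Assumption~\ref{ass:efficient_rates}, and then to read off the complexity parameters tabulated in Table~\ref{tab:Complexity_Estimates_for__perturbations__lin_prob_control__Pt2} from three ingredients: the H\"older exponent $\gamma$ and constant of $U^{\star}$ furnished by Assumption~\ref{assm:continuity}; the metric entropy of $\kkk_0$, quantified through Example~\ref{ex:perturbations__lin_prob_control__Pt2}; and the finite-dimensional truncation errors, which the exponential coefficient decay in Assumption~\ref{ass:efficient_rates}~(i)--(ii) forces to be small after only logarithmically many dimensions. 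I would split the target accuracy as $\varepsilon/3$ into (a) the error of representing the target $U^{\star}(u^0)$ inside the span of the finitely many decoder values $\{V^{(n,q)}\}$, (b) the error incurred because the encoder only sees the first $d$ basis coefficients $\mathcal{E}(u^0)=(\langle u^0,s_i\rangle_{\mathcal{H}_T^2})_{i\le d}$ of the input, and (c) the error of the feedforward components $f,\mathcal{V}$ approximating the resulting finite-dimensional map on coefficient space.

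\emph{Encoding and decoding dimensions.} Writing $\beta_i\eqdef\langle u^0-\bar u,s_i\rangle_{\mathcal{H}_T^2}$, Assumption~\ref{ass:efficient_rates}~(i) gives $|\beta_i|\le Ce^{-ri}$, hence
\[
    \|(u^0-\bar u)-p_d(u^0-\bar u)\|_{\mathcal{H}_T^2}^2
  =\sum_{i>d}\beta_i^2
  \le C^2\sum_{i>d}e^{-2ri}
  =\frac{C^2e^{-2r(d+1)}}{1-e^{-2r}} .
\]
Since the fixed offset $\bar u$ can be absorbed into the first affine layer of $f$ (resp.\ $\mathcal{V}$), the vector $(\beta_i)_{i\le d}$ is recovered from $\mathcal{E}(u^0)$, and two elements of $\kkk_0$ sharing these first $d$ coefficients differ in $\mathcal{H}_T^2$ by at most the displayed tail; by $\gamma$-H\"older continuity of $U^{\star}$ this contributes an error $\le\varepsilon/3$ once $d=\mathcal{O}\big((r\gamma)^{-1}\log(1/\varepsilon)\big)$, so the encoding dimension is $\mathcal{O}(\log(1/\varepsilon))$. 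The same geometric-series estimate applied to $U^{\star}(u^0)$ through Assumption~\ref{ass:efficient_rates}~(ii) shows the target lies within $\varepsilon/3$ of $\operatorname{span}\{s_1,\dots,s_{d'}\}$ with $d'=\mathcal{O}(\log(1/\varepsilon))$. One may therefore take $V^{(n,q)}=s_q$ with $Q=d'$ and let $\mathcal{V}$ emit the target coefficients identically across the $N$ softmax blocks, so that $\mathcal{D}(f\circ\mathcal{E}(u^0),\mathcal{E}(u^0))=\sum_{q=1}^{d'}\mathcal{V}_q(\mathcal{E}(u^0))\,s_q$ and the softmax weights returned by $f$ drop out; in particular $N$ may be taken to be $1$ (or carried as a free parameter), and the attentional complexity $NQ$ is $\mathcal{O}(\log(1/\varepsilon))$.

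\emph{Feedforward complexity.} Composing the encoder, $U^{\star}$, and the decoder's coordinate readout yields an induced map $\bar U\colon \mathcal{E}(\kkk_0)\subset\mathbb{R}^{d}\to\mathbb{R}^{d'}$ which, since orthogonal projections and coordinate maps are $1$-Lipschitz, inherits the exponent $\gamma$ and a comparable H\"older constant from $U^{\star}$. In the concrete instance of Example~\ref{ex:perturbations__lin_prob_control__Pt2} the $\delta$-covering number of $\mathcal{E}(\kkk_0)$ is at most $3^{d}(2\sqrt d/\delta)^{d}$ by~\eqref{eq:metric_entropy_bound}, i.e.\ metric entropy $\mathcal{O}(d\log(1/\delta))$, because $\mathcal{E}$ is $1$-Lipschitz on top of the Lipschitz latent parameterization $\pi$. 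Feeding this entropy bound together with the H\"older modulus into the MLP approximation rates on doubling metric spaces underpinning Theorem~\ref{thrm:UniversalApprox} (the metric-snowflake construction built on~\cite{kratsiosuniversal}) produces a network $\mathcal{V}$ (and a trivial $f$) of depth $J$ and width $W$ polynomial in $1/\varepsilon$, with exponents depending only on $\gamma$ and the latent dimension $d$, achieving $\sup$-norm error $\le\varepsilon/3$ over $\mathcal{E}(\kkk_0)$ for the coefficient map. Substituting $J$, $W$, $N$, and $Q$ into the parameter count $JW^2+NQ$ of Definition~\ref{defn:NO} yields the entries of Table~\ref{tab:Complexity_Estimates_for__perturbations__lin_prob_control__Pt2}; assembling $\hat U$ from this encoder, these networks, and these values and invoking the triangle inequality over the three $\varepsilon/3$ contributions delivers~\eqref{eq:thrm:Main__ResponseEquilibria}, and the conclusion of Theorem~\ref{thrm:Main__BestResponse}, which asks only for the existence of such a neural operator, is then immediate.

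I expect the main obstacle to be bookkeeping rather than any single hard step. Two points need care: first, one must check that the softmax-weighted, $\mathcal{V}$-modulated decoder genuinely reproduces an arbitrary element of the truncated target subspace up to tolerance --- i.e.\ that the ``simplex of values'' spans enough of $\mathcal{H}_T^2$ --- which is where the freedom to let $\mathcal{V}$ be an arbitrary (not merely $\sigma$-activated) MLP and to repeat basis elements among the $V^{(n,q)}$ is used; second, the H\"older constant must be propagated cleanly through the encoder--$\mathcal{V}$--decoder composition and through the snowflake reduction, so that the tabulated exponents come out as the claimed polynomials in $1/\varepsilon$ rather than something worse. What prevents the infinite-dimensional curse of dimensionality of~\cite{lanthaler2023curse} from appearing here, and makes the rates polynomial, is precisely that Assumption~\ref{ass:efficient_rates} --- via Example~\ref{ex:perturbations__lin_prob_control__Pt2} --- substitutes the latent dimension $d$ for the ambient dimension both in the encoding/decoding truncation (through the exponential decay) and in the covering-number estimate.
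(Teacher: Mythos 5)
Your proposal departs from the paper's own route in a genuine way: the paper proves this theorem with essentially one line, noting that $U^{\star}$ is $1/2$-H\"older, that the relevant compact set (via Example~\ref{ex:perturbations__lin_prob_control__Pt2}) is an $(r,f,d)$-exponential manifold in the sense of Definition~\ref{defn:exp_manifold}, and then reading off the exponential-manifold row of Table~\ref{tab:complexity__lemma} with $\varepsilon_D=\varepsilon_A$. The engine being invoked is Lemma~\ref{lem:UAT_1}, whose construction routes the approximation entirely through the attention mechanism: $\mathcal{V}$ is a \emph{constant} MLP whose bias stores the fixed target values $V_{n,q}=\langle F\circ\iota_d(x_n),s_q\rangle$ at the $N$ cover centers, while the nontrivial approximation lives in $f$ and the softmax/simplex interpolation over an $\varepsilon_1$-cover built from a weak random projection on snowflaked doubling metrics. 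Your construction inverts this allocation: you take $N=1$, choose $V^{(1,q)}=s_q$, and let softmax collapse so that $\mathcal{D}$ becomes the linear decoder $\sum_q\mathcal{V}_q(\mathcal{E}(u))\,s_q$ with a \emph{non-constant} MLP $\mathcal{V}$ directly emulating the coefficient map. That is a legitimate way to use the architecture under Definition~\ref{defn:NO}, and if you can actually produce rates within Table~\ref{tab:Complexity_Estimates_for__perturbations__lin_prob_control__Pt2} this way (the theorem only demands upper bounds, so smaller $N$ is acceptable), it would arguably be a sharper and cleaner proof than the paper's.

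However, as written there are two concrete gaps. First, you never actually verify the tabulated complexities: you assert that the MLP is ``polynomial in $1/\varepsilon$, with exponents depending only on $\gamma$ and the latent dimension $d$'' but do not compute the depth, width, or parameter count against the specific entries $\mathcal{O}(\ln(\varepsilon^{-1/r})\,\varepsilon^{-\ln(C)/r})$, $\mathcal{O}(\varepsilon^{-\ln(C)/r})$, etc. Second, and more substantively, once you offload the H\"older approximation onto $\mathcal{V}$, the relevant quantitative tool is a direct MLP approximation theorem on a compact subset of $\mathbb{R}^d$ with $d=\mathcal{O}(\log\varepsilon^{-1})$ (in the super-expressive case of Example~\ref{ex:Activation_Superexpressive}, fixed width and depth $\mathcal{O}(d)$ per output, parallelized over $Q$ outputs), \emph{not} the doubling-metric/snowflake machinery you cite. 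That machinery (Br\`u\'e--Di Marino--Stra weak random projections and the doubling bound $\le 2^{d+1}$) only enters the paper's proof because the attention weights $f$ must implement a covering of $\mathcal{E}_d(\kkk_0)$, which you have discarded by taking $N=1$. You have correctly identified the two structural sources of efficiency --- exponential coefficient decay gives logarithmic encoding/decoding dimension, and the latent Lipschitz chart controls the metric entropy --- but the argument needs to be re-anchored to the correct MLP rate theorem and then pushed through numerically to confirm it stays under Table~\ref{tab:Complexity_Estimates_for__perturbations__lin_prob_control__Pt2}. One final bookkeeping note: Definition~\ref{defn:NO}'s parameter count $JW^2+NQ$ is derived assuming $\mathcal{V}$ is trivial; with a non-constant $\mathcal{V}$ the total parameter budget changes, although this does not affect the present theorem, which only bounds depth, width, $Q$, $d$, and $N$ separately.
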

\begin{table}[H]
    \centering
    \begin{adjustbox}{width=\textwidth}
    \begin{tabular}{ccccc}
    \toprule
         \textbf{Depth} & \textbf{Width} & \textbf{Decoding Dim.} $(Q)$ & \textbf{Encoding Dim.} & \textbf{Att.\ Complexity} ($N$)
         \\
        \midrule
         $
            \mathcal{O}\Big(
                \ln(\varepsilon
                ^{-1/r})
                \,
                \varepsilon
                ^{-\ln(C)/r}
            \Big)
         $
         & 
         $
            \mathcal{O}\big(
                \varepsilon
                ^{
                -\ln(C)/r}
            \big)
         $
         &
         $
         \mathcal{O}(\ln(\varepsilon
         ^{-1/r}))
         $
         &
         $
         \mathcal{O}(\varepsilon
         ^{1/(1-r)})
         $
         &
         $
         \big(\tilde{c}\,\varepsilon
         ^{-1}\,       
            \ln(\varepsilon
            ^{-1/r})^{1/2}   
        \big)^{c(\ln(\varepsilon
        ^{-1/r})}
         $
    \\
    \bottomrule
    \end{tabular}
    \end{adjustbox}
    \caption{Parametric Complexity of Attentional Neural Operator Approximation of the Best Response Map $U^{\star}$ over the compact set $\kkk_0$ of Assumption~\ref{ass:efficient_rates}, with trainable activation function $\sigma$ of~\eqref{ex:Activation_Superexpressive}.
    \hfill\\
    Here, $C,r>0$ are the constant defined in Assumption~\ref{ass:efficient_rates} and $c>0$ is an absolute constant.
    }
    \label{tab:Complexity_Estimates_for__perturbations__lin_prob_control__Pt2}
\end{table}

\section{Overview of Proof for \texorpdfstring{Theorems~\ref{thrm:Main__BestResponse} and~\ref{thrm:Objective}}{Main Results}}
\label{s:ProofSketch}

The derivation of Theorem~\ref{thrm:Main__BestResponse} is undertaken in two steps.  First, one must show that, under mild conditions, the best response operator set $\mathcal{R}$ is single-valued and there is a continuous selection therein, i.e., the best response operator $U^{\star}$ is a well-defined continuous non-linear operator.
This is key since our neural operator architectures are continuous, and classes of continuous functions cannot uniformly approximate discontinuous functions by the Uniform Limit theorem, see e.g.~\cite[Theorem 21.6]{MunkresTopBook}.

Since we now know that the best response operator is well-defined and continuous, we need to show that our attentional neural operator class has the power to approximate it or, more generally, functions of the same regularity.  This is guaranteed by the following Universal Approximation theorem, guaranteeing that our class of neural operators can approximate, uniformly on compacta, any continuous non-linear operator with square-integrable $\mathbb{F}_{\cdot}$-adapted processes as inputs and outputs.

\begin{theorem}[Universal Approximation for Operators Between $\mathbb{F}$-Adapted $2$-Integrable Processes]
\label{thrm:UniversalApprox}
For every (non-empty) compact subset $\kkk_0\subset \mathcal{H}_T^2$, each continuous ``target'' function $f:\kkk_0\to\mathcal{H}_T^2$, and every ``approximation error'' $\varepsilon>0$ there exists an attentional neural operator $\hat{F}:\calH_T^2\to\calH_T^2$ satisfying
\begin{equation*}
        \max_{u\in \kkk_0}\, 
            \|f(u)-\hat{F}(u)\|_{\mathcal{H}_T^2} 
    \leq
        \varepsilon.
\end{equation*}
\end{theorem}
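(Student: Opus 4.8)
The plan is to exploit the encoder--processor--decoder structure of the attentional neural operator and reduce the infinite-dimensional approximation problem to a finite-dimensional one, then discharge the finite-dimensional step with a classical MLP universal approximation theorem. First I would fix a compact $\kkk_0\subset\mathcal{H}_T^2$, a continuous target $f$, and $\varepsilon>0$. Since $\kkk_0$ is compact and $f$ is continuous, $f(\kkk_0)$ is compact in $\mathcal{H}_T^2$; using the orthonormal basis $\mathcal{S}=\{s^{(i)}\}_{i=1}^\infty$ of~\eqref{eq:simple_processes}, the tail sums $\sum_{i>d}\langle\cdot,s^{(i)}\rangle s^{(i)}$ converge to $0$ uniformly on any compact set, so I can pick a truncation level $d\in\mathbb{N}_+$ (for the decoder's output basis) and an encoding dimension (also taken to be $d$ for simplicity) such that the orthogonal projection $p_d$ onto $\mathrm{span}\{s^{(i)}\}_{i=1}^d$ satisfies $\sup_{u\in\kkk_0}\|f(u)-p_d f(u)\|_{\mathcal{H}_T^2}<\varepsilon/3$ and, moreover, the encoding map $\mathcal{E}:u\mapsto(\langle u,s^{(i)}\rangle)_{i=1}^d\in\mathbb{R}^d$ is bi-Lipschitz onto its image when restricted to $\kkk_0$ modulo the tail (in fact $\mathcal{E}$ is $1$-Lipschitz and linear, and its image $\mathcal{E}(\kkk_0)\subset\mathbb{R}^d$ is compact). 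The composition $\tilde f\eqdef (\langle f(\cdot),s^{(i)}\rangle)_{i=1}^d\circ\mathcal{E}^{-1}$ is a well-defined continuous map on the compact set $\mathcal{E}(\kkk_0)\subset\mathbb{R}^d$ provided $\mathcal{E}$ is injective on $\kkk_0$; if it is not injective one instead works with a continuous extension of the multivalued-looking object, but this is handled by noting $f$ factors continuously through $\mathcal{E}$ on the relevant scale — more cleanly, apply Tietze to extend $\langle f,s^{(i)}\rangle\circ(\text{a continuous right inverse on }\mathcal{E}(\kkk_0))$ to all of $\mathbb{R}^d$.

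Next I would realize the finite-dimensional map $\tilde f:\mathbb{R}^d\to\mathbb{R}^d$ (or $\mathbb{R}^N$ after rescaling into the decoder's output coordinates) within the decoder's attention format. The decoder $\mathcal{D}(w,x)=\sum_{n=1}^N\operatorname{softmax}(w)_n\sum_{q=1}^Q\mathcal{V}_{n,q}(x)V^{(n,q)}$ must reproduce $p_d f(u)=\sum_{i=1}^d c_i(u)s^{(i)}$. Choosing $N=d$, taking $\{V^{(n,q)}\}$ to include the basis elements $s^{(1)},\dots,s^{(d)}$ (with $Q$ large enough to have room for the needed linear combinations), and using $\mathcal{V}$ (an MLP) to produce the coefficients $c_i(u)$ divided by the softmax weights, one reduces to: there is an MLP $f\in\mathcal{NN}^{\sigma}_{J,W:d,N}$ and an MLP $\mathcal{V}$ so that $\mathcal{D}(f\circ\mathcal{E}(u),\mathcal{E}(u))\approx p_d f(u)$ uniformly on $\kkk_0$. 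Since the softmax output is bounded away from $0$ on the compact image of $f\circ\mathcal{E}$, the division is harmless; the remaining task is purely the statement that MLPs with activation $\sigma$ of Example~\ref{ex:Activation_Standard} or~\ref{ex:Activation_Superexpressive} are universal approximators on compact subsets of Euclidean space — this is exactly the theorem of~\cite{KidgerLyons} for the standard case, and follows for the super-expressive activation of~\cite{zhang2022deep} as it contains ReLU-like behavior. One picks $J,W$ so that the MLP error, propagated through the (Lipschitz, on the relevant compact range) decoder, contributes at most $\varepsilon/3$, and so that the softmax/coefficient bookkeeping contributes another $\varepsilon/3$.

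Assembling the three error contributions — truncation $\varepsilon/3$, MLP approximation $\varepsilon/3$, decoder bookkeeping $\varepsilon/3$ — and using the triangle inequality in $\mathcal{H}_T^2$ yields the claim. The main obstacle I anticipate is the bookkeeping inside the attention decoder: one must show that the fairly rigid format $\sum_n\operatorname{softmax}(w)_n\sum_q\mathcal{V}_{n,q}(x)V^{(n,q)}$ is genuinely flexible enough to hit an arbitrary linear combination $\sum_{i=1}^d c_i(u)s^{(i)}$ with coefficients depending continuously (and approximable by an MLP) on $u$ — in particular controlling the interaction between the input-dependent softmax weights and the values network so that their product can represent arbitrary coefficients, which is where one uses the freedom in $Q$ and the boundedness of softmax away from zero. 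A secondary technical point is the injectivity/factorization issue for $\mathcal{E}$ on $\kkk_0$, resolved by a Tietze extension argument as indicated above; everything else (compactness of $f(\kkk_0)$, uniform basis-tail decay, Lipschitz continuity of the linear encoder) is routine Hilbert-space functional analysis.
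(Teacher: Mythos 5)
There is a genuine gap in how you handle the non-injectivity of the encoder $\mathcal{E}$ on $\kkk_0$. You define $\tilde f \eqdef (\langle f(\cdot),s^{(i)}\rangle)_{i=1}^d\circ\mathcal{E}^{-1}$ and, acknowledging that $\mathcal{E}$ need not be injective, propose to ``apply Tietze to extend $\langle f,s^{(i)}\rangle\circ(\text{a continuous right inverse on }\mathcal{E}(\kkk_0))$.'' This does not close the issue: (i) a \emph{continuous} right inverse of $\mathcal{E}\vert_{\kkk_0}$ valued in $\kkk_0$ need not exist when $\mathcal{E}$ collapses distinct points of $\kkk_0$, so the object you would Tietze-extend is itself ill-defined; and (ii) even granting some right inverse $\iota_d$ with $\mathcal{E}\circ\iota_d=\mathrm{id}$ (e.g.\ the linear one, $\iota_d(x)=\sum_i x_i s^{(i)}$), the actual error term you must control is $\|f(u)-f(\iota_d\circ\mathcal{E}(u))\|_{\mathcal{H}_T^2}$, which is an \emph{information loss} caused by discarding the tail of $u$ — not a definability issue that Tietze can fix. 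Controlling it requires two things you do not supply: a modulus of continuity of $f$ that applies at points $\iota_d\circ\mathcal{E}(u)\notin\kkk_0$ (so you need to extend $f$ beyond $\kkk_0$ \emph{first}), and uniform smallness of $\|u-\iota_d\circ\mathcal{E}(u)\|$ on $\kkk_0$. The paper handles exactly this in two places: it first replaces $f$ by a globally defined Lipschitz $f_\varepsilon$ (Miculescu density + Benyamini–Lindenstrauss extension to all of $\mathcal{H}_T^2$), and then bounds the collapse error by $L\cdot\sup_{u\in\kkk_0}\|u-\iota_d\circ\mathcal{E}_d(u)\|$, which is controlled via the $1$-bounded approximation property of the Schauder basis. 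Your sketch is missing both the extension step and the modulus-of-continuity control, and the ``Tietze'' line papers over the place where they are needed.

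Beyond that gap, your route is genuinely different from the paper's and, once patched, would only give the qualitative statement. The paper reduces to a Lipschitz target and then invokes Lemma~\ref{lem:UAT_1}, whose decoder construction is not a coefficient readout: it picks an $\varepsilon$-net $\{x_n\}_{n=1}^N$ of $\mathcal{E}_d(\kkk_0)$, uses a weak random projection into the $1$-Wasserstein space over that net (Bru\`e–Di Marino–Stra) to obtain a Lipschitz map into the $N$-simplex, and then reads off the output as a barycentric combination (via softmax) of the finitely many projected images $P_Q f^{(1)}(x_n)$. That simplicial structure, with explicit control of doubling constants and Lipschitz norms, is what yields the complexity table and thence Theorem~\ref{thrm:Main__BestResponse___goodrates}. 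Your proposal instead tries to force the decoder to emit arbitrary coefficients $c_i(u)$ by ``dividing by the softmax weights,'' which is workable but unnecessary (setting $N=1$ collapses the softmax to $1$ and turns the decoder into a linear combination with MLP-produced coefficients — cleaner than dividing) and, more importantly, relies only on qualitative MLP universality, so it cannot recover the quantitative rates the paper extracts from the same lemma. If all you want is the unquantified Theorem~\ref{thrm:UniversalApprox}, your strategy is salvageable after fixing the extension/modulus-of-continuity step; it is just not the proof the paper gives.
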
 

Lemma~\ref{lm:stabilityJ0} shows that $U^\star$ is $1/2$-H\"{o}lder continuous. Together, Lemma~\ref{lm:stabilityJ0} and Theorem~\ref{thrm:UniversalApprox} imply that the best response map can be approximated.  The main step in using these results to deduce Theorem~\ref{thrm:Objective} lies in the continuous dependence of the leader's response on the best response. 

\section{Examples of Stackelberg Games Satisfying \texorpdfstring{Assumption~\ref{assm:continuity}}{The Basic Regularity of the Follower's Response Map}}
\label{s:Exmaples_Regular Optimal Response}

An easily verifiable sufficient condition guarantees that Assumption \ref{assm:continuity} holds.  The condition requires that the Hamiltonian of the follower satisfies a basic level of strong convexity; where the Hamiltonian of the follower is given by
\begin{align*}
        H_1(x,u^0,u^1,p_1,q_1) 
    \eqdef 
        p_1^\top f(x,u^0,u^1)+Tr(q_1^\top \sigma (x,u^0,u^1))+L_1(x,u^0,u^1)
.
\end{align*}
\begin{proposition}[Hamiltonian Strong Convexity Implies H\"{o}lder Continuity]
\label{prop:Sufficient_ContinuityBeforeDiscretization}
Assume Assumption \ref{assm:LipfsigLg} and that there exists $\kappa>0$ such that the functions
    \begin{align*}
        (x,u^1)\in \R^d&\mapsto H_1(x,u^0,u^1,p_1,q_1)-\frac{\kappa}{2}|u^1|^2\\
        x\in \R^d&\mapsto g_1(x)
    \end{align*}
    are convex for all values of other variables. 
    Assume also that $\nabla_x L_1$ and $\nabla_{x} g_1$ are Lipschitz continuous in their variables. 
    Then, 
    $ \mathcal{R}(u^0)$ is single valued and its unique element 
    $\{U^{\star}_t(u^0)\}=\mathcal{R}(u^0)$ 
    have the following continuous dependence
   \begin{align}\label{eq:contd}
       &\frac{\kappa}{2}\E\Big[\int_0^T |U^{\star}_t(u^0)-U^{\star}_t(\tilde u^0)|^2dt\Big]\notag\\
       &\leq J_1(\tilde u^0,U^{\star}_t(\tilde u^0))-J_1( u^0,U^{\star}_t(u^0))+J_1( u^0, U^{\star}_t(\tilde u^0))-J_1( \tilde u^0,U^{\star}_t(\tilde u^0)) 
   \end{align}
    and the Assumption \ref{assm:continuity} holds. 
    
\end{proposition}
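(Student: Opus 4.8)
The plan is to establish, in order: (1) for each fixed leader control $u^0$, the follower's cost functional $J_1(u^0,\cdot)$ is $\kappa$-strongly convex on the Hilbert space $\uuu_1$; (2) hence $\mathcal{R}(u^0)$ is a singleton $\{U^{\star}(u^0)\}$ and, as the standard consequence of strong convexity at a minimizer, $J_1(u^0,v)-J_1(u^0,U^{\star}(u^0))\ge\tfrac{\kappa}{2}\|v-U^{\star}(u^0)\|_{\mathcal{H}_T^2}^2$ for every $v\in\uuu_1$; (3) the right-hand side of~\eqref{eq:contd} is exactly such a difference once the two identical summands $J_1(\tilde u^0,U^{\star}(\tilde u^0))$ cancel, which proves~\eqref{eq:contd}; and (4) that same difference is controlled linearly in $\|u^0-\tilde u^0\|_{\mathcal{H}_T^2}$ by a standard SDE stability estimate, which forces $U^{\star}$ to be $1/2$-H\"{o}lder continuous, i.e.\ verifies Assumption~\ref{assm:continuity}.

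For step~(1) I would run the classical adjoint / stochastic-maximum-principle computation. Fix $u^0$ and a base follower control $u^1$ with state $X$, and let $(p,q)$ be the adjoint pair solving the linear BSDE with terminal value $p_T=\nabla_x g_1(X_T)$ and driver $-\nabla_x H_1(X_t,u^0_t,u^1_t,p_t,q_t)$; this is well posed since $\nabla_x L_1$ and $\nabla_x g_1$ are Lipschitz and---beyond the stated hypotheses, but consistently with~\eqref{LipfLg}---$f$ and $\sigma$ are differentiable in the state with bounded derivatives. For a competitor $v^1\in\uuu_1$ with state $X^v$, applying It\^{o}'s formula to $p_t^\top(X^v_t-X_t)$, using convexity of $g_1$ for the terminal term, and rearranging, one obtains
\[
    J_1(u^0,v^1)-J_1(u^0,u^1)\ \ge\ \E\Big[\int_0^T\big(H_1(X^v_t,u^0_t,v^1_t,p_t,q_t)-H_1(X_t,u^0_t,u^1_t,p_t,q_t)-\nabla_x H_1(X_t,u^0_t,u^1_t,p_t,q_t)^\top(X^v_t-X_t)\big)\,dt\Big].
\]
By the hypothesis that $(x,w)\mapsto H_1(x,u^0_t,w,p_t,q_t)-\tfrac{\kappa}{2}|w|^2$ is convex, the integrand is bounded below by $\langle\ell_t,v^1_t-u^1_t\rangle+\tfrac{\kappa}{2}|v^1_t-u^1_t|^2$ once the $\nabla_x H_1$ contributions cancel (here the $x$-component of any joint subgradient is forced to equal $\nabla_x H_1$ by partial differentiability in $x$), where $\ell$ is a measurable selection of subgradients in the $w$-variable; since the Lipschitz growth of $f,\sigma,L_1$ in $w$ puts $\ell\in\mathcal{H}_T^2$ and $u^1$ was arbitrary, this exhibits $J_1(u^0,\cdot)-\tfrac{\kappa}{2}\|\cdot\|_{\mathcal{H}_T^2}^2$ as subdifferentiable everywhere, hence convex, i.e.\ $J_1(u^0,\cdot)$ is $\kappa$-strongly convex. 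Step~(2) is then convex analysis: coercivity is immediate from strong convexity, weak lower semicontinuity follows from convexity together with the norm-continuity of $J_1(u^0,\cdot)$ on $\mathcal{H}_T^2$ (standard from SDE moment bounds and the Lipschitz growth of $L_1,g_1$), so the direct method yields a minimizer, strict convexity gives uniqueness, and the displayed inequality is the usual strong-convexity bound at the minimizer.

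For step~(4) I would show that $v^0\mapsto J_1(v^0,w)$ is Lipschitz on $\mathcal{H}_T^2$ with a constant $L=L(K,T)$ independent of $w$: writing $\delta X$ for the difference of the state trajectories driven by $(u^0,w)$ and $(\tilde u^0,w)$, the common control $w$ cancels in every application of~\eqref{LipfLg} to $f$ and $\sigma$, so Gr\"{o}nwall's inequality gives $\E[\sup_{t\le T}|\delta X_t|^2]\le C(K,T)\|u^0-\tilde u^0\|_{\mathcal{H}_T^2}^2$, and the Lipschitz bounds on $L_1,g_1$ then yield $|J_1(u^0,w)-J_1(\tilde u^0,w)|\le L\|u^0-\tilde u^0\|_{\mathcal{H}_T^2}$. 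Combining the optimality $J_1(\tilde u^0,U^{\star}(\tilde u^0))\le J_1(\tilde u^0,U^{\star}(u^0))$ with this Lipschitz bound (applied twice) shows $J_1(u^0,U^{\star}(\tilde u^0))-J_1(u^0,U^{\star}(u^0))\le 2L\|u^0-\tilde u^0\|_{\mathcal{H}_T^2}$; together with~\eqref{eq:contd} this gives $\|U^{\star}(u^0)-U^{\star}(\tilde u^0)\|_{\mathcal{H}_T^2}\le(4L/\kappa)^{1/2}\|u^0-\tilde u^0\|_{\mathcal{H}_T^2}^{1/2}$, so $U^{\star}$ is $1/2$-H\"{o}lder and Assumption~\ref{assm:continuity} holds. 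The main obstacle is step~(1): propagating strong convexity from the Hamiltonian to the functional with the sharp constant $\kappa$---in particular, the well-posedness of the adjoint BSDE (where differentiability of $f,\sigma$ in the state enters) and the verification that the ``$w$-part'' of the Hamiltonian convexity inequality genuinely produces an $\mathcal{H}_T^2$-subgradient of $J_1(u^0,\cdot)$. Everything afterward---the cancellation in~\eqref{eq:contd}, the uniform-in-$w$ SDE stability estimate, and the extraction of the $1/2$-H\"{o}lder modulus---is routine.
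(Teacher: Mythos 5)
Your proposal reaches the same conclusion by a genuinely different route in the key step and matches the paper afterwards; here is how they compare.

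The target in both cases is the functional strong convexity of $J_1(u^0,\cdot)$ on $\mathcal{H}_T^2$; once this is in hand, uniqueness of the best response, the first-order inequality \eqref{eq:contd}, and the passage to $1/2$-H\"{o}lder continuity via the Lipschitz dependence of $J_1$ on $u^0$ (Lemma~\ref{lm:|Ji(u0u1)-Ji(tu0tu1)|<=|u0-tu0|+|u1-tu1|} and Lemma~\ref{lm:contJ1u0R(u0)} in the paper, which are exactly your ``standard SDE stability estimates'') are identical. The difference is how strong convexity is extracted from the Hamiltonian hypothesis. The paper proves it as an interpolation inequality: it sets $u^\lambda=\lambda u^1+(1-\lambda)\tilde u^1$, solves the adjoint BSDE \eqref{bsde} centered at the \emph{interpolated} state and control $(X^\lambda,u^\lambda)$, applies It\^{o}'s formula to $(Y^\lambda)^\top(X^i-X^\lambda)$ for $i=0,1$, and uses joint convexity of $(x,u^1)\mapsto H_1-\tfrac{\kappa}{2}|u^1|^2$ to obtain directly
$J_1(u^0,u^\lambda)\leq\lambda J_1(u^0,u^1)+(1-\lambda)J_1(u^0,\tilde u^1)-\tfrac{\kappa\lambda(1-\lambda)}{2}\|u^1-\tilde u^1\|_{\mathcal{H}_T^2}^2$. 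You instead fix a \emph{base} $(X,u^1)$, solve the adjoint there, compare against an arbitrary competitor, and exhibit an everywhere-defined $\mathcal{H}_T^2$-subgradient of $J_1(u^0,\cdot)-\tfrac{\kappa}{2}\|\cdot\|_{\mathcal{H}_T^2}^2$. Both use the same adjoint BSDE, It\^{o} trick, and Hamiltonian convexity, but the paper's interpolation argument never needs a measurable subgradient selection in the $u^1$-variable (only the convex-combination inequality), whereas your route must argue that the selection $\ell$ can be chosen adapted and square-integrable — a genuine extra step you flag only in passing. Conversely, your write-up makes explicit the existence argument (coercivity from strong convexity plus weak lower semicontinuity from convexity and norm continuity) that the paper asserts without comment. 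The additional differentiability of $f,\sigma$ in $x$ that you honestly label ``beyond the stated hypotheses'' is in fact used just as implicitly by the paper, since the BSDE driver is $\nabla_x H_1$.
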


\begin{remark}[Deriving Alternative Sufficient Conditions]
   1) Another case where one can check the Assumption \ref{assm:continuity} would be to directly rely on \cite[Section 3]{peng1999fully} and check their Assumption H3.1 uniformly in $u^0$ and rely on the existence of solution to a fully coupled FBSDE. Note that the case in Proposition \ref{prop:Sufficient_ContinuityBeforeDiscretization} deviates from \cite[Section 3]{peng1999fully}. Indeed, our proof is self-contained and only requires the existence of a solution for a BSDE (and not FBSDE) \eqref{bsde} for a given $u^0,u^1,\tilde u^1$ which is a trivial problem. Although it is out of scope of this work, one can then check that the optimally controlled state together with the solution to \eqref{bsde} leads to a solution to the same FBSDE. 
   
\end{remark}

The following is a broad class of functions satisfying the assumption of Proposition~\ref{prop:Sufficient_ContinuityBeforeDiscretization}.
\begin{example}
\label{ex:convex}
Let $A$, $A^{\sigma}$ be matrices of dimensions $d\times d$, $B$, $B^{\sigma}$ be matrices of dimensions $d\times d^1$, 
$(C,C^{\sigma},C^L):\mathbb{R}^{d_0}\to \mathbb{R}\times  \mathbb{R}\times  \mathbb{R}_+$ and $(D,D^{\sigma}):\mathbb{R}^{d_0}\to (\mathbb{R}^d)^2$ be Lipschitz functions. Let $L_1^1:\mathbb{R}^{d\times d_1}\to \mathbb{R}$ convex in $x$ and strongly convex in $u^1$ and $L_1^2:\mathbb{R}^{d_0}\to \mathbb{R}_+$ and $g:\mathbb{R}^d\to \mathbb{R}$ have Lipschitz gradients (in $x$) 
.
Define
\begin{align*}
            f(x,u^0,u^1) 
        & \eqdef 
            C(u^0)\,
            \big( A\, x + B u^1 \big) + D(u^0)
    \\
            \sigma(x,u^0,u^1) 
        &\eqdef
            C^{\sigma}(u^0)\, 
            \big( A^{\sigma} \, x + B^{\sigma} u^1 \big)  + D^\sigma (u^0)
    \\
            L_1(x,u^0,u^1) 
        &\eqdef 
            C^L(u^0)\,
            L_1^1(x,u^1) + L_1^2(u^0)
.
\end{align*}
The Hamiltonian associated to these $f$, $\sigma$, and $L_1$ and the map $g$ satisfy the assumption of Proposition~\ref{prop:Sufficient_ContinuityBeforeDiscretization}. 
\end{example} 

In linear-quadratic Stackelberg games, e.g.,~\cite{yong2002leader}, the coefficients $f(x, u^0, u^1)$ and $\sigma(x, u^0, u^1)$ are linear in $(x, u^0, u^1)$, and the costs $L_i(x, u^0, u^1)$ and $g_i(x)$ $i=0, 1$ are quadratic forms of $(x, u^0, u^1)$. 
With an additional assumption that the weight matrix of the quadratic form for $u^1$ in $L_1$ is positive definite 
the assumption of Proposition~\ref{prop:Sufficient_ContinuityBeforeDiscretization} is satisfied.

\section{Conclusion}
\label{s:Conclusion}
We show that, the best response map for a broad class of dynamic Stackelberg games with random effects, can be approximated by neural operators (Theorem~\ref{thrm:Main__BestResponse}).  
This implies that Stackelberg games may be solved (approximate computation of equilibria in Theorem~\ref{thrm:Objective}), and we can explicitly describe how they are played (approximate representation of best response map) without making highly stylized conditions needed in classical results needed to derive analytic expressions for the best response and the equilibirum itself (see e.g.\ Example~\ref{ex:perturbations__lin_prob_control}).  
Thus, our approach allows one to specify realistic dynamics and action sets, and obtain an approximate solution.

We further showed that if the space of actions for the leader consists of perturbations of the optimal solution for a linearized version of the Stackelberg game, then one can approximate the best response map (for the general game) for the follower much more efficiently (Theorem~\ref{thrm:Main__BestResponse___goodrates}).

\subsection*{Future Research}
\label{s:Future_Research}

Our universal approximation theorem for non-linear operators between spaces of $\mathbb{F}$-adapted square-integrable processes, namely Theorem~\ref{thrm:UniversalApprox}, is quantitative.  Though, as with most approximation theorems between infinite-dimensional Hilbert spaces, the approximation rates are particularly insightful.  Nevertheless, we find that if one uses a trainable variant of the ``super-expressive'' activation function of \cite{zhang2022deep} (see~\eqref{eq:supreexpressive}) when defining our neural operator and if the compact subset on which the non-linear operator is sufficiently close to being finite-dimensional (see Definition~\ref{defn:exp_width}) then exponential approximation rates can be achieved (see Table~\ref{tab:complexity__lemma}).  The main challenge is then to verify that the non-linear operator being approximated and the relevant compact set of adapted open-loop controls satisfy the required compatibility conditions.  Constructing examples of Stackelberg games with these properties, i.e.\ games whose best-response operator is efficiently approximable, is a highly non-trivial but interesting project and is the objective of our future research.  

Nevertheless, we include the relevant quantitative universal approximation theorem and conditions for exponential approximation rates in the appendix of this paper so that these results may be used in other operator learning problem in stochastic analysis and its applications; e.g. to economics and finance.

\appendix


\section{Counterexamples}
\label{s:CounterEx}
\textbf{The effective optimization problem of the leader can be discontinuous}
\hfill\\
\noindent In Proposition \ref{prop:Sufficient_ContinuityBeforeDiscretization}, we show that a sufficient condition for the continuous dependence of the optimal response function $U^{\star}$ is the strong convexity of the problem of the follower. We now provide the example of a deterministic game that shows that both $U^{\star}$ and $u^0\mapsto J_0(u^0, U^{\star}(u^0))$ might not be continuous without the strong convexity assumption. 
Consider the deterministic single period game where the controls are $u^i\in [0,1]$. 
Fix the loss functions
$l_1(u^0,u^1)=u^0 u^1$ and $l_0(u^0,u^1)=-u^1$ so that the leader's problem is 
\begin{align}\label{eq:value1}
\inf_{u^0\in [0,1]} \inf_{u^1\in \cR(u^0) }l_0(u^0,u^1)
\end{align}
and $\cR(u^0) \eqdef \{u^1\in [0,1]: l_1(u^0,u^1)\leq \inf l_1(u^0,\cdot)\}$. Note that $l_1$ is convex in $u_1$ but it lacks the strong convexity in $u_1$ needed in Proposition \ref{prop:Sufficient_ContinuityBeforeDiscretization}. Clearly
$\cR(u^0)=\{0\}$ if $u^0>0$ and $\cR(0)=[0,1]$. Thus, 
$$\inf_{u^1\in \cR(u^0) }l_0(u^0,u^1)=-1_{\{u^0=0\}}$$
which is discontinuous in $u_0$.

\section{Proofs}
\label{s:Proofs}
\subsection{Continuous Dependence of \texorpdfstring{$J_i$}{Ji}}

\noindent For the next result, it is convenient to recall that the $\mathcal{H}_T^{\infty}$ norm of a process $X_{\cdot}$ in $\mathcal{H}_T^2$ is given by
\[
        \|X_{\cdot}\|_{\mathcal{H}_T^{\infty}} 
    \eqdef 
        \mathbb{E}\biggl[
            \sup_{0\le t\le T}\,
                |X_t|
        \biggr]
.
\]
\begin{lemma} 
\label{lm:sup|Xu1-Xtildeu1|2}
Under Assumption~\ref{assm:LipfsigLg}, we have the following uniform continuity guarantees
\begin{align} 
& \Big\|
        \big| 
                X^{u^0, u^1}_{\cdot}
            - 
                X^{u^0, \tilde{u}^1}_{\cdot} 
        \big|^2  
    \Big\|_{\mathcal{H}_T^{\infty}}
 \leq  
    C \,
        \big\|
            u^1
            -
            \widetilde{u}^1
        \big\|_{\mathcal{H}_T^2}^2 , 
\label{estESup|Xu1-Xtildeu1|2}
\\
& \Big\|
        \big| 
                X^{u^0, u^1}_\cdot 
            - 
                X^{\tilde{u}^0, u^1}_\cdot   
        \big|^2   
    \Big\|_{\mathcal{H}_T^{\infty}}
 \leq  
    C \,
        \big\|
            u^0
            -
            \widetilde{u}^0
        \big\|_{\mathcal{H}_T^2}^2
,
\label{estESup|Xu0-Xtildeu0|2}
\end{align}
where $C\ge 0$ is a constant that depends on $T$ and the Lipschitz constant $K$ in Assumption~\ref{assm:LipfsigLg};   
moreover, 
\begin{align} 
& \Big\|
                X^{u^0, u^1}_{\cdot}
            - 
                X^{u^0, \tilde{u}^1}_{\cdot}   
    \Big\|_{\mathcal{H}_T^{\infty}}
 \leq  
    C \,
        \big\|
            u^1
            -
            \widetilde{u}^1
        \big\|_{\mathcal{H}_T^2} , 
\label{estESup|Xu1-Xtildeu1|}
\\
& \Big\|
                X^{u^0, u^1}_\cdot 
            - 
                X^{\tilde{u}^0, u^1}_\cdot   
    \Big\|_{\mathcal{H}_T^{\infty}}
 \leq  
    C \,
        \big\|
            u^0
            -
            \widetilde{u}^0
        \big\|_{\mathcal{H}_T^2}
. 
\label{estESup|Xu0-Xtildeu0|}
\end{align} 
\end{lemma}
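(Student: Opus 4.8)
The plan is to establish the two \emph{squared} estimates \eqref{estESup|Xu1-Xtildeu1|2} and \eqref{estESup|Xu0-Xtildeu0|2} first, by a Gr\"onwall argument built on the Burkholder--Davis--Gundy (BDG) inequality, and then to deduce the two \emph{unsquared} estimates \eqref{estESup|Xu1-Xtildeu1|} and \eqref{estESup|Xu0-Xtildeu0|} as an immediate corollary via the Cauchy--Schwarz inequality. By symmetry it suffices to treat the case where the follower's control varies; the case where the leader's control varies is handled verbatim with the roles of $u^0$ and $u^1$ interchanged.

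So I would fix $u^0\in\uuu_0$ and $u^1,\tilde u^1\in\uuu_1$, write $X\eqdef X^{u^0,u^1}$ and $\tilde X\eqdef X^{u^0,\tilde u^1}$, and subtract the integral forms of \eqref{dX}; since both processes start from the same deterministic $X_0$ the difference is
\[
X_t-\tilde X_t
=
\int_0^t\!\big(f(X_s,u^0_s,u^1_s)-f(\tilde X_s,u^0_s,\tilde u^1_s)\big)\,ds
+\int_0^t\!\big(\sigma(X_s,u^0_s,u^1_s)-\sigma(\tilde X_s,u^0_s,\tilde u^1_s)\big)\,dW_s .
\]
First I would take the running supremum over $[0,t]$ of $|X_\cdot-\tilde X_\cdot|^2$, split it with $|a+b|^2\le 2|a|^2+2|b|^2$, bound the drift contribution by Cauchy--Schwarz in the time variable (which produces a factor $\le T$) and the martingale contribution by BDG (equivalently, Doob's $L^2$ maximal inequality), and then take expectations. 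The crucial simplification is that the leader's control enters \emph{both} arguments of $f$ (and of $\sigma$) with the same value, so Assumption~\ref{assm:LipfsigLg} gives $|f(X_s,u^0_s,u^1_s)-f(\tilde X_s,u^0_s,\tilde u^1_s)|\le K(|X_s-\tilde X_s|+|u^1_s-\tilde u^1_s|)$ with no $u^0$ term, and similarly for $\sigma$. Substituting this, using $|X_s-\tilde X_s|^2\le \sup_{0\le r\le s}|X_r-\tilde X_r|^2$, $|a+b|^2\le 2|a|^2+2|b|^2$ again, and Fubini's theorem, leaves for $\phi(t)\eqdef\E\big[\sup_{0\le r\le t}|X_r-\tilde X_r|^2\big]$ an inequality of the form $\phi(t)\le C_1\int_0^t\phi(s)\,ds+C_2\|u^1-\tilde u^1\|_{\mathcal{H}_T^2}^2$, where $C_1,C_2$ depend only on $T$ and $K$; Gr\"onwall's lemma then yields $\phi(T)\le C\|u^1-\tilde u^1\|_{\mathcal{H}_T^2}^2$, which is exactly \eqref{estESup|Xu1-Xtildeu1|2}, and the symmetric computation gives \eqref{estESup|Xu0-Xtildeu0|2}.

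For the unsquared bounds I would simply observe that for any $Y_\cdot\in\mathcal{H}_T^2$ one has $\|Y_\cdot\|_{\mathcal{H}_T^{\infty}}=\E\big[\sup_{0\le t\le T}|Y_t|\big]\le\big(\E\big[\sup_{0\le t\le T}|Y_t|^2\big]\big)^{1/2}=\big\||Y_\cdot|^2\big\|_{\mathcal{H}_T^{\infty}}^{1/2}$ by Jensen's (equivalently Cauchy--Schwarz's) inequality, apply this to $Y_\cdot=X^{u^0,u^1}_\cdot-X^{u^0,\tilde u^1}_\cdot$ and to $Y_\cdot=X^{u^0,u^1}_\cdot-X^{\tilde u^0,u^1}_\cdot$, and take square roots in \eqref{estESup|Xu1-Xtildeu1|2}--\eqref{estESup|Xu0-Xtildeu0|2}, relabelling the constant. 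I do not expect a genuine obstacle here: the only step needing a little care is the BDG/Gr\"onwall bookkeeping, and in particular justifying $\phi(T)<\infty$ before invoking Gr\"onwall, which follows from the standard a priori second-moment estimate for SDEs with Lipschitz coefficients and square-integrable controls (or from a routine stopping-time localization applied to $\phi$).
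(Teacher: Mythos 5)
Your proposal is correct and follows essentially the same route as the paper: take the difference of the integral forms of \eqref{dX}, split the drift and martingale contributions, control the drift by Jensen/Cauchy--Schwarz in time and the martingale by BDG, pass the Lipschitz bound from Assumption~\ref{assm:LipfsigLg} through Fubini/Tonelli, close with Gr\"onwall, and deduce the unsquared bounds by Cauchy--Schwarz. Your added remark about verifying finiteness of $\phi$ before invoking Gr\"onwall is a reasonable technical precaution that the paper leaves implicit.
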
 
\begin{proof} 
We prove \eqref{estESup|Xu1-Xtildeu1|2}, noting that the derivation of~\eqref{estESup|Xu0-Xtildeu0|2} is carried out in a nearly identical similar manner. 
Let $X$ and $\widetilde{X}$ be the strong solution of \eqref{dX} under $(u^0, u^1)$ and $(u^0, \widetilde{u}^1)$, respectively, 
i.e., $X = X^{u^0, u^1}$, $\widetilde{X} = X^{u^0, \tilde{u}^1}$.  
Denote $h(s)=h(X_s, u^0_s, u^1_s)$ and 
$\widetilde{h}(s)=h(\widetilde{X}_s, u^0_s, \widetilde{u}^1_s)$, where $h$ is as defined in Assumption~\ref{assm:LipfsigLg}.  

By Jensen's inequality, we have 
\begin{align} 
  |X_r-\widetilde{X}_r|^2   
= & \Big| \int_0^r f(s) - \widetilde{f}(s) ds 
 + \int_0^r \sigma(s) - \widetilde{\sigma}(s) d W_s \Big|^2  
 \notag \\ 
\leq & 2 \Big| \int_0^r f(s) - \widetilde{f}(s) ds \Big|^2 
 + 2 \Big| \int_0^r \sigma(s) - \widetilde{\sigma}(s) d W_s \Big|^2  
\notag \\ 
\leq & 2  \int_0^r | f(s) - \widetilde{f}(s) |^2 ds     
+ 2 \Big| \int_0^r \sigma(s) - \widetilde{\sigma}(s) d W_s \Big|^2  
\notag 
\end{align} 
We deduce that
\begin{align} 
  \sup_{0\leq r \leq t}  |X_r-\widetilde{X}_r|^2    
\leq   
 2  \int_0^t | f(s) - \widetilde{f}(s) |^2 ds   
 + 2 \sup_{0\leq r \leq t} \Big| \int_0^r \sigma(s) - \widetilde{\sigma}(s) d W_s \Big|^2 . 
 \label{est|Xu1-Xtildeu1|2int}   
\end{align}
By Burkholder-Davis-Gundy's inequality \cite[Theorem 11.5.5]{cohenelliotbook}, Tonelli's theorem \citep[Theorem 1.4.6]{cohenelliotbook}, and Assumption \ref{assm:LipfsigLg}, we have 
\begin{align} 
  \mathbb{E} \sup_{0\leq r \leq t} \Big| \int_0^r \sigma(s) - \widetilde{\sigma}(s) d W_s \Big|^2        
 \leq & C \cdot \mathbb{E}\int_0^t | \sigma(s) - \widetilde{\sigma}(s) |^2   ds  \notag \\ 
\leq & C \cdot \int_0^t  \mathbb{E} \sup_{0\leq r \leq s} 
  | X_r - \widetilde{X}_r |^2   
 +  |u^1_s - \widetilde{u}^1_s|^2 \, ds. 
 \label{est|Xu1-Xtildeu1|2diffusion} 
\end{align}  
By Tonelli's Theorem and Assumption~\ref{assm:LipfsigLg}, we have  
\begin{align}
  \mathbb{E} \int_0^t  | f(s) - \widetilde{f}(s) |^2    \, ds      
 \leq & C \cdot \int_0^t \big| X_s - \widetilde{X}_s \big|^2  
  + |u^1_s - \widetilde{u}^1_s |^2  \, ds     \notag \\ 
 \leq &  
 C \cdot \int_0^t  \mathbb{E} \sup_{0\leq r \leq s} 
  | X_r - \widetilde{X}_r |^2   
 + |u^1_s - \widetilde{u}^1_s|^2  \, ds 
  \label{est|Xu1-Xtildeu1|2drift} 
\end{align} 
Combining \eqref{est|Xu1-Xtildeu1|2int}, \eqref{est|Xu1-Xtildeu1|2diffusion}, and \eqref{est|Xu1-Xtildeu1|2drift}, we obtain 
\begin{align} 
  \mathbb{E} \sup_{0\leq r \leq t}  |X_r-\widetilde{X}_r|^2    
  \leq &  
 C \cdot \int_0^t  \mathbb{E} \sup_{0\leq r \leq s} 
  | X_s - \widetilde{X}_s |^2  ds  
  + C \cdot \mathbb{E} \int_0^t |u^1_s - \widetilde{u}^1_s|^2  \, ds . \notag   
\end{align} 
From the above inequality and Gr\"{o}nwall's inequality we obtain
\begin{align} 
& \mathbb{E} \Big[
    \sup_{0\leq t \leq T}\,
        \big| 
                X^{u^0, u^1}_t 
            - 
                X^{u^0, \tilde{u}^1}_t  
        \big|^2  
    \Big]
 \leq  
    C \cdot  \mathbb{E} \int_0^T  |u^1_t  - \tilde{u}^1_t|^2  dt 
\label{estESup|Xu1-Xtildeu1|2___v1}
.
\end{align} 
Arguing nearly identically, we may also obtain
\begin{align}
& \mathbb{E} \Big[
    \sup_{0\leq t \leq T}\,
            \big| 
                    X^{u^0, u^1}_t 
                - 
                    X^{\tilde{u}^0, u^1}_t  
            \big|^2   
    \Big]
 \leq  
    C \cdot  \mathbb{E} \int_0^T  |u^0_t  - \tilde{u}^0_t|^2 dt 
. 
\label{estESup|Xu0-Xtildeu0|2___v1}
\end{align}
Note that the right-hand side of~\eqref{estESup|Xu1-Xtildeu1|2___v1} (resp.\ \eqref{estESup|Xu0-Xtildeu0|2___v1}) is simply a scalar multiple (by a factor of $C\ge 0$) of the squared norm between $u^1$ and $\widetilde{u}^1$ (resp.\ $u^0$ and $\widetilde{u}^0$).  
By definition of the $\|\cdot\|_{\mathcal{H}_T^{\infty}}$ norm applied to the ``squared difference processes'' $
\big| 
        X^{u^0, u^1}_{\cdot}
    - 
        X^{u^0, \tilde{u}^1}_{\cdot}
\big|^2   
$ and $
\big| 
        X^{u^0, u^1}_{\cdot}
    - 
        X^{\tilde{u}^0, u^1}_{\cdot}
\big|^2   
$,~\eqref{estESup|Xu1-Xtildeu1|2___v1} and~\eqref{estESup|Xu0-Xtildeu0|2___v1} can be re-expressed as~\eqref{estESup|Xu1-Xtildeu1|2} and~\eqref{estESup|Xu0-Xtildeu0|2}. 
\eqref{estESup|Xu1-Xtildeu1|} and~\eqref{estESup|Xu0-Xtildeu0|} follows from 
\eqref{estESup|Xu1-Xtildeu1|2} and~\eqref{estESup|Xu0-Xtildeu0|2} by Cauchy-Schwarz inequality; 
thus concluding our proof.
\end{proof} 

\begin{lemma} 
\label{lm:|Ji(u0u1)-Ji(tu0tu1)|<=|u0-tu0|+|u1-tu1|}
The costs $J_i$, $i=0, 1$ are Lipschitz in $u^0$ and $u^1$ such that 
for each $u^i,\tilde{u}^i \in \mathcal{U}_i \cap \mathcal{H}^2_T$, $i=0, 1$, 
\begin{align} 
\big| J_i(u^0, u^1) - J_i(\tilde{u}^0, \tilde{u}^1) \big| 
\leq 
C \, \big( 
        \big\|
            u^0
            -
            \widetilde{u}^0
        \big\|_{\mathcal{H}_T^2} 
        + 
        \big\|
            u^1
            -
            \widetilde{u}^1
        \big\|_{\mathcal{H}_T^2} 
        \big) , 
\label{|Ji(u0u1)-Ji(tu0tu1)|<=|u0-tu0|+|u1-tu1|}
\end{align} 
where $C$ is a constant depending on $T$ and the Lipschitz constant $K$ in Assumption~\ref{assm:LipfsigLg}. 
\end{lemma}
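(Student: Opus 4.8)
The plan is to deduce the claim directly from the state-process stability estimates in Lemma~\ref{lm:sup|Xu1-Xtildeu1|2} together with the Lipschitz regularity of $L_i$ and $g_i$ furnished by Assumption~\ref{assm:LipfsigLg}. Throughout, write $X_\cdot \eqdef X^{u^0,u^1}_\cdot$ and $\widetilde X_\cdot \eqdef X^{\tilde u^0,\tilde u^1}_\cdot$ for the strong solutions of~\eqref{dX} driven by the two control pairs, and let $C$ denote a constant depending only on $T$ and $K$ that may change from line to line. The first step is to bound the distance between $X_\cdot$ and $\widetilde X_\cdot$: inserting the intermediate process $X^{u^0,\tilde u^1}_\cdot$ and using the triangle inequality for $\|\cdot\|_{\mathcal{H}_T^\infty}$, estimate~\eqref{estESup|Xu1-Xtildeu1|} controls $\|X^{u^0,u^1}_\cdot - X^{u^0,\tilde u^1}_\cdot\|_{\mathcal{H}_T^\infty}$ by $C\|u^1-\tilde u^1\|_{\mathcal{H}_T^2}$, while~\eqref{estESup|Xu0-Xtildeu0|} (applied with the follower's control fixed at $\tilde u^1$) controls $\|X^{u^0,\tilde u^1}_\cdot - X^{\tilde u^0,\tilde u^1}_\cdot\|_{\mathcal{H}_T^\infty}$ by $C\|u^0-\tilde u^0\|_{\mathcal{H}_T^2}$, so that
\[
\big\|X_\cdot-\widetilde X_\cdot\big\|_{\mathcal{H}_T^\infty}
\le C\big(\|u^0-\tilde u^0\|_{\mathcal{H}_T^2}+\|u^1-\tilde u^1\|_{\mathcal{H}_T^2}\big).
\]

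Next, I would split $J_i(u^0,u^1)-J_i(\tilde u^0,\tilde u^1)$ into its running-cost and terminal-cost contributions and estimate each separately. For the terminal term, the Lipschitz bound on $g_i$ gives $|g_i(X_T)-g_i(\widetilde X_T)|\le K|X_T-\widetilde X_T|\le K\sup_{0\le t\le T}|X_t-\widetilde X_t|$, so taking expectations yields $\mathbb{E}|g_i(X_T)-g_i(\widetilde X_T)|\le K\|X_\cdot-\widetilde X_\cdot\|_{\mathcal{H}_T^\infty}$, which is already of the desired form by the displayed estimate above. For the running term, the Lipschitz bound on $L_i$ gives $|L_i(X_t,u^0_t,u^1_t)-L_i(\widetilde X_t,\tilde u^0_t,\tilde u^1_t)|\le K(|X_t-\widetilde X_t|+|u^0_t-\tilde u^0_t|+|u^1_t-\tilde u^1_t|)$ pointwise in $t$ and $\omega$; integrating over $[0,T]$ and taking expectations, I would bound $\mathbb{E}\int_0^T|X_t-\widetilde X_t|\,dt\le T\,\|X_\cdot-\widetilde X_\cdot\|_{\mathcal{H}_T^\infty}$ and, via Cauchy--Schwarz in $(t,\omega)$, $\mathbb{E}\int_0^T|u^j_t-\tilde u^j_t|\,dt\le \sqrt T\,\|u^j-\tilde u^j\|_{\mathcal{H}_T^2}$ for $j=0,1$. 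Summing these three pieces and invoking the state estimate once more bounds the running term by $C(\|u^0-\tilde u^0\|_{\mathcal{H}_T^2}+\|u^1-\tilde u^1\|_{\mathcal{H}_T^2})$.

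Adding the two contributions and collecting constants gives~\eqref{|Ji(u0u1)-Ji(tu0tu1)|<=|u0-tu0|+|u1-tu1|}. I do not expect any genuine obstacle here: the argument is a routine combination of the triangle inequality, the Lipschitz hypotheses, and Lemma~\ref{lm:sup|Xu1-Xtildeu1|2}. The only mild subtlety worth flagging is bookkeeping — one must pass from the $L^1(dt\otimes d\mathbb{P})$-type quantities that the Lipschitz bound on $L_i$ produces to the $\mathcal{H}_T^2$ norms appearing in the statement, which is exactly where finiteness of the horizon $T$ and Cauchy--Schwarz enter, and one must chain the two one-sided state-stability estimates of Lemma~\ref{lm:sup|Xu1-Xtildeu1|2} through the intermediate process $X^{u^0,\tilde u^1}_\cdot$ rather than attempting to compare $X^{u^0,u^1}_\cdot$ and $X^{\tilde u^0,\tilde u^1}_\cdot$ directly.
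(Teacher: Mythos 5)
Your proof is correct and follows essentially the same route as the paper's: split $J_i$ into running- and terminal-cost pieces, apply the Lipschitz bounds from Assumption~\ref{assm:LipfsigLg}, control the state-process difference via the triangle inequality through an intermediate process together with Lemma~\ref{lm:sup|Xu1-Xtildeu1|2}, and pass from $L^1(dt\otimes d\mathbb{P})$ to $\mathcal{H}_T^2$ norms by Cauchy--Schwarz. The only cosmetic difference is the choice of intermediate process ($X^{u^0,\tilde u^1}$ in yours versus $X^{\tilde u^0, u^1}$ in the paper), which is immaterial by symmetry.
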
 

\begin{proof} 
For each $u^i,\tilde{u}^i \in \mathcal{U}_i \cap \mathcal{H}^2_T$, $i=0, 1$, 
it follows from Assumption~\ref{assm:LipfsigLg} that 
\begin{align} 
 & | J_i(u^0, u^1) - J_i(\widetilde{u}^0, \widetilde{u}^1) | \notag \\  
 \leq & 
 \mathbb{E} \Big[ \int_0^T  \big| L_i( X^{u^0, u^1}_t, u^0_t, u^1_t ) 
 - L_i( X^{\tilde{u}^0, \tilde{u}^1}_t,  \widetilde{u}^0_t, \widetilde{u}^1_t  ) \big| \, dt  
 + \big| g_i(X^{u^0, u^1}_T) 
 - g_i( X^{\tilde{u}^0, \tilde{u}^1}_T ) \big| \Big] \notag \\ 
 \leq & 
  \mathbb{E} \Big[ \int_0^T   K ( |X^{u^0, u^1}_t - X^{\tilde{u}^0, \tilde{u}^1}_t | 
  + |u^0_t - \widetilde{u}^0_t | + |u^1_t - \widetilde{u}^1_t| )  \, dt  
 + K | X^{u^0, u^1}_T - X^{\tilde{u}^0, \tilde{u}^1}_T |  \Big] . 
 \notag \\ 
 \leq & 
 K(1+T) \big\|  X^{u^0, u^1} - X^{\tilde{u}^0, \tilde{u}^1}   \big\|_{\mathcal{H}^\infty_T} 
  + K \big( \big\| u^0 - \widetilde{u}^0 \big\|_{\mathcal{H}_T^2} 
  + \big\| u^0 - \widetilde{u}^0 \big\|_{\mathcal{H}_T^2}  \big) , \notag 
\end{align} 
where the last inequality is due to the Cauchy-Schwarz inequality. 
By the triangle inequality and Lemma~\ref{lm:sup|Xu1-Xtildeu1|2}, we have 
\begin{align} 
 \Big\|
             X^{u^0, u^1}_{\cdot}
            - 
                X^{\tilde{u}^0, \tilde{u}^1}_{\cdot}   
    \Big\|_{\mathcal{H}_T^{\infty}} 
\leq &  
2 \Big( \Big\|
                X^{u^0, u^1}_{\cdot}
            - 
                X^{\tilde{u}^0, u^1}_{\cdot}   
    \Big\|_{\mathcal{H}_T^{\infty}} 
    + 
  \Big\|
                X^{ \tilde{u}^0, u^1}_{\cdot}
            - 
                X^{\tilde{u}^0, \tilde{u}^1}_{\cdot}   
    \Big\|_{\mathcal{H}_T^{\infty}} 
    \Big) 
\notag \\ 
 \leq &  
    C \, \Big( 
        \big\|
            u^0
            -
            \widetilde{u}^0
        \big\|_{\mathcal{H}_T^2} 
        + 
        \big\|
            u^1
            -
            \widetilde{u}^1
        \big\|_{\mathcal{H}_T^2} 
        \Big)  . 
\notag 
\end{align} 
It then follows that 
\begin{align} 
\big| J_1(u^0, u^1) - J_1(\tilde{u}^0, \tilde{u}^1) \big| 
\leq 
C \, \Big( 
        \big\|
            u^0
            -
            \widetilde{u}^0
        \big\|_{\mathcal{H}_T^2} 
        + 
        \big\|
            u^1
            -
            \widetilde{u}^1
        \big\|_{\mathcal{H}_T^2} 
        \Big). 
\notag 
\end{align} 

\end{proof}

\begin{lemma} 
\label{lm:contJ1u0R(u0)} Assume  Assumption~\ref{assm:LipfsigLg} and that for all $u^0\in \kkk_0$, $\mathcal{R}(u^0)$ is not empty and choose $U^{\star}(u^0) \in \mathcal{R}(u^0)$. Then,
the map $u^0\in \kkk_0\mapsto J_1(u^0,U^{\star}(u^0))$ is continuous;  
particularly, for each $u^0,\tilde{u}^0 \in \mathcal{U}_0\cap \mathcal{H}^2_T$, 
it satisfies for the follower that
\begin{align}
    &    \big| J_1(u^0, U^{\star}(u^0)) - J_1(\widetilde{u}^0, U^{\star}(\widetilde{u}^0) ) \big|  
    \le
        C 
        \,
        \big\|u^0 - \widetilde{u}^0 \big\|_{\mathcal{H}^2_T} . 
\label{|J1(u0U(u0))-J1(tu0U(tu0))|<=C|u0-tu0|}
\end{align} 
The constant $C$ is depend on $T$ and the Lipschitz constant $K$ in Assumption~\ref{assm:LipfsigLg}.  
\end{lemma}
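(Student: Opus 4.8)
The plan is to observe that $J_1(u^0,U^{\star}(u^0))$ is in fact the follower's \emph{value function} $V_1(u^0)\eqdef\inf_{u^1\in\uuu_1}J_1(u^0,u^1)$, and that an infimum of a family of Lipschitz functions sharing one Lipschitz constant is itself Lipschitz with that constant. Concretely, since $U^{\star}(u^0)\in\mathcal{R}(u^0)$, the definition~\eqref{R(u0)} gives $J_1(u^0,U^{\star}(u^0))\le J_1(u^0,u^1)$ for every $u^1\in\uuu_1$, so $J_1(u^0,U^{\star}(u^0))=\inf_{u^1\in\uuu_1}J_1(u^0,u^1)$, the infimum being finite (indeed $J_1\ge 0$ because $L_1,g_1\ge 0$) and attained at $U^{\star}(u^0)$; the non-emptiness hypothesis on $\mathcal{R}(u^0)$ is exactly what makes this reformulation legitimate for every $u^0\in\kkk_0$.

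Next I would invoke Lemma~\ref{lm:|Ji(u0u1)-Ji(tu0tu1)|<=|u0-tu0|+|u1-tu1|} in the special case $i=1$ with $\widetilde{u}^1=u^1$ held fixed: for every fixed follower control $u^1$, the map $u^0\mapsto J_1(u^0,u^1)$ is $C$-Lipschitz on $\uuu_0\cap\mathcal{H}^2_T$, where $C$ depends only on $T$ and the Lipschitz constant $K$ of Assumption~\ref{assm:LipfsigLg} and, crucially, \emph{not} on the frozen control $u^1$. This uniformity is the only point that needs a moment's care, and it is immediate from the form of the estimate in that lemma.

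The final step is the standard two-sided comparison. Fix $u^0,\widetilde{u}^0\in\uuu_0\cap\mathcal{H}^2_T$ and use $U^{\star}(\widetilde{u}^0)\in\uuu_1$ as a (generally suboptimal) competitor at $u^0$:
\[
J_1(u^0,U^{\star}(u^0))
\;\le\;
J_1(u^0,U^{\star}(\widetilde{u}^0))
\;\le\;
J_1(\widetilde{u}^0,U^{\star}(\widetilde{u}^0))+C\,\|u^0-\widetilde{u}^0\|_{\mathcal{H}^2_T},
\]
where the first inequality is optimality of $U^{\star}(u^0)$ and the second is the uniform Lipschitz bound from the previous step. Interchanging the roles of $u^0$ and $\widetilde{u}^0$ gives the reverse inequality, and the two together yield~\eqref{|J1(u0U(u0))-J1(tu0U(tu0))|<=C|u0-tu0|}; continuity of $u^0\mapsto J_1(u^0,U^{\star}(u^0))$ follows at once. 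I do not expect a genuine obstacle in this lemma: the whole content is the value-function reformulation plus the uniformity of the Lipschitz constant, and neither the continuity nor the selection properties of $U^{\star}$ itself are needed for the argument.
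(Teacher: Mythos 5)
Your proof is correct and is essentially the paper's own argument: both proofs reduce to the value-function identity $J_1(u^0,U^{\star}(u^0))=\inf_{u^1}J_1(u^0,u^1)$, invoke Lemma~\ref{lm:|Ji(u0u1)-Ji(tu0tu1)|<=|u0-tu0|+|u1-tu1|} to get a $u^1$-uniform Lipschitz bound in $u^0$, and then run the standard two-sided comparison of infima. The only cosmetic difference is that you plug in the explicit suboptimal competitor $U^{\star}(\widetilde{u}^0)$, while the paper passes the inequality through the infimum over all $u$; these are the same step.
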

{\begin{remark}
With the assumption of Lemma \ref{lm:contJ1u0R(u0)}, $u^0\in \kkk_0\mapsto J_1(u^0,U^{\star}(u^0))$ is continuous but as shown in counterexample in Section \ref{s:CounterEx}, $u^0\in \kkk_0\mapsto J_0(u^0,U^{\star}(u^0))$ might fail to be continuous. 
\end{remark}}
\begin{proof} 

By \eqref{|Ji(u0u1)-Ji(tu0tu1)|<=|u0-tu0|+|u1-tu1|}, we have for any $u \in \mathcal{U}_0 \cap \mathcal{H}_T^2$,  
\begin{align} 
 J_1(u^0, u) 
 \leq & J_1(\tilde{u}^0, u) + |J_1(\tilde{u}^0, u) - J_1(u^0, u) |    \notag \\ 
  \leq & J_1(\tilde{u}^0, u) + C \| u^0 - \tilde{u}^0 \|_{\mathcal{H}^2_T} , \notag 
\end{align} 
and furthermore 
\begin{align}  
  J_1(u^0, U^{\star}(u^0))=\inf_ u J_1(u^0,u) 
  \leq & \inf_ u J_1(\tilde u^0,u) +  C \| u^0 - \tilde{u}^0 \|_{\mathcal{H}^2_T} \notag \\ 
 = & J_1(\tilde u^0, U^{1,\star}(\tilde u^0) ) +  C \| u^0 - \tilde{u}^0 \|_{\mathcal{H}^2_T} . 
 \label{J1(u0u1)<=J1(tu0tu1)+K|u0-tu0|} 
\end{align} 
Exchanging the roles of $(u^0, U^{1,\star}(u^0))$ and $(\tilde{u}^0, U^{1,\star}(\tilde u^0))$ in \eqref{J1(u0u1)<=J1(tu0tu1)+K|u0-tu0|}, we have 
\begin{align} 
 J_1( \tilde{u}^0, U^{\star}( \tilde{u}^0) )  \leq J_1( u^0, U^{\star}(u^0)) +  C \| u^0 - \tilde{u}^0 \|_{\mathcal{H}^2_T} .  
 \label{J1(tu0tu1)<=J1(u0u1)+K|tu0-u0|} 
\end{align} 
Combining \eqref{J1(u0u1)<=J1(tu0tu1)+K|u0-tu0|} and \eqref{J1(tu0tu1)<=J1(u0u1)+K|tu0-u0|}, we obtain \eqref{|J1(u0U(u0))-J1(tu0U(tu0))|<=C|u0-tu0|}.  
\end{proof}

\begin{lemma}[H\"{o}lder Regularity of the Leader's Utility on Optimal Response]
\label{lm:stabilityJ0}
Under Assumptions~\ref{assm:LipfsigLg} and~\ref{assm:continuity}, there exists $C,\alpha>0$ depending only on $K,T$ and the Holder norm in Assumption \ref{assm:continuity} so that for each $u^0,\tilde{u}^0 \in \mathcal{U}_0\cap \mathcal{H}^2_T$,  
we have 
\[
    \big| J_0(u^0, U^{\star}(u^0)) - J_0(\widetilde{u}^0, U^{\star}(\widetilde{u}^0) ) \big|  
\leq   
    \tilde{\omega}\big(
        \|u^0-\widetilde{u}^0\|_{\mathcal{H}_T^2}
    \big)
\]
where $\tilde{\omega}(t) \eqdef C\max\{|t|^{\alpha},|t|\}$ for each $t\ge 0$.
\end{lemma}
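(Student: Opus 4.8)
The plan is to combine the joint Lipschitz continuity of the cost functional $J_0$ established in Lemma~\ref{lm:|Ji(u0u1)-Ji(tu0tu1)|<=|u0-tu0|+|u1-tu1|} with the H\"{o}lder regularity of the follower's best response map postulated in Assumption~\ref{assm:continuity}, and then absorb both the linear and the H\"{o}lder terms into a single modulus of the form $\tilde\omega(t)=C\max\{|t|^\alpha,|t|\}$.

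Concretely, I would proceed as follows. First, apply the estimate~\eqref{|Ji(u0u1)-Ji(tu0tu1)|<=|u0-tu0|+|u1-tu1|} of Lemma~\ref{lm:|Ji(u0u1)-Ji(tu0tu1)|<=|u0-tu0|+|u1-tu1|} with $i=0$, $u^1=U^{\star}(u^0)$, and $\tilde u^1=U^{\star}(\tilde u^0)$, which yields a constant $C_1>0$ (depending only on $T$ and $K$) such that
\[
    \big| J_0(u^0, U^{\star}(u^0)) - J_0(\tilde u^0, U^{\star}(\tilde u^0)) \big|
\le
    C_1\big( \|u^0-\tilde u^0\|_{\mathcal{H}_T^2} + \|U^{\star}(u^0)-U^{\star}(\tilde u^0)\|_{\mathcal{H}_T^2} \big).
\]
Second, invoke Assumption~\ref{assm:continuity}: there exist $L\ge 0$ and $\alpha\in(0,1]$ (the H\"{o}lder exponent and constant of $U^{\star}$) with
$\|U^{\star}(u^0)-U^{\star}(\tilde u^0)\|_{\mathcal{H}_T^2}\le L\,\|u^0-\tilde u^0\|_{\mathcal{H}_T^2}^{\alpha}$. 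Substituting this into the previous display gives
\[
    \big| J_0(u^0, U^{\star}(u^0)) - J_0(\tilde u^0, U^{\star}(\tilde u^0)) \big|
\le
    C_1\,\|u^0-\tilde u^0\|_{\mathcal{H}_T^2} + C_1 L\,\|u^0-\tilde u^0\|_{\mathcal{H}_T^2}^{\alpha}.
\]
Third, for any $t\ge 0$ one has both $t\le\max\{t^\alpha,t\}$ and $t^\alpha\le\max\{t^\alpha,t\}$, so the right-hand side is at most $C_1(1+L)\max\{\|u^0-\tilde u^0\|_{\mathcal{H}_T^2}^{\alpha},\|u^0-\tilde u^0\|_{\mathcal{H}_T^2}\}$. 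Setting $C\eqdef C_1(1+L)$ — which depends only on $K$, $T$, and the H\"{o}lder norm of $U^{\star}$ from Assumption~\ref{assm:continuity} — and $\tilde\omega(t)\eqdef C\max\{|t|^\alpha,|t|\}$ completes the argument.

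There is essentially no serious obstacle here: the lemma is a direct composition of two already-proven facts, and the only mild care needed is bookkeeping of the constants so that they depend only on the quantities named in the statement, together with the elementary observation that a sum of a linear and a H\"{o}lder term is controlled by a single $\max$-type modulus. If one wanted to be slightly more precise, one could note that on bounded sets the $t^\alpha$ term dominates (since $\alpha\le 1$), so the $\max$ is only there to also cover large displacements; this does not affect the proof.
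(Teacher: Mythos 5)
Your proof is correct and follows essentially the same route as the paper: apply Lemma~\ref{lm:|Ji(u0u1)-Ji(tu0tu1)|<=|u0-tu0|+|u1-tu1|} with $i=0$ and the two best-response controls, then substitute the H\"{o}lder bound from Assumption~\ref{assm:continuity} and absorb the linear and H\"{o}lder terms into the single modulus $\tilde\omega$ via $a+b\le 2\max\{a,b\}$. (Your citation is actually the cleaner one -- the paper points at equation~\eqref{J1(u0u1)<=J1(tu0tu1)+K|u0-tu0|}, which concerns $J_1$, but the first line of its own display is precisely what Lemma~\ref{lm:|Ji(u0u1)-Ji(tu0tu1)|<=|u0-tu0|+|u1-tu1|} gives for $J_0$.)
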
 

\begin{proof}[{Proof of Lemma~\ref{lm:stabilityJ0}}]
By \eqref{J1(u0u1)<=J1(tu0tu1)+K|u0-tu0|} and Assumption \ref{assm:continuity}, we have 
\begin{align} 
\label{eq:PRF_lm:stabilityJ0__detailed_nearlycomplete}
 \big| J_0(u^0, U^{\star}(u^0)) - J_0(\widetilde{u}^0, U^{\star}(\widetilde{u}^0) ) \big| 
 \leq & 
  C  \|u^0-\widetilde{u}^0\|_{\mathcal{H}_T^2}
 + C   \big\| U^{\star}(u^0) - U^{\star}(\widetilde{u}^0) \big\|_{\mathcal{H}_T^2} \notag \\ 
  \leq & 
  C  \|u^0-\widetilde{u}^0\|_{\mathcal{H}_T^2}
 + (C\widetilde{C})   \|u^0-\widetilde{u}^0\|_{\mathcal{H}_T^2}^{\alpha} , 
\end{align} 
for some constant $C\ge 0$ depending only on $T,K$ and $\tilde C,\alpha\geq 0$ the Holder continuity parameters of the best response function given by Assumption \ref{assm:continuity}.  Define $C^{\prime}\eqdef C\max\{1,\tilde{C}\}/2$
.  Note that $a+b \le 2\max\{a,b\}$ for every $a,b\ge 0$ then~\eqref{eq:PRF_lm:stabilityJ0__detailed_nearlycomplete} implies that
\begin{equation}
\label{eq:PRF_lm:stabilityJ0__detailed}
    \big| J_0(u^0, U^{\star}(u^0)) - J_0(\widetilde{u}^0, U^{\star}(\widetilde{u}^0) ) \big|  
\leq 
    C^{\prime} \cdot 
    \max\Big\{ 
        \|u^0-\widetilde{u}^0\|_{\mathcal{H}_T^2}
    ,
        \|u^0-\widetilde{u}^0\|_{\mathcal{H}_T^2}^{\alpha}
    \Big\} 
.
\end{equation}
Define the modulus of continuity $\tilde{\omega}(t)\eqdef C^{\prime}\,\max\{|t|^{\alpha},|t|\}$.  
Since $t \leq t^{\alpha}$ when $t\in [0,1)$ and $t\geq t^{\alpha}$ when $t\in [1,\infty)$ then,~\eqref{eq:PRF_lm:stabilityJ0__detailed} cleans up as
\begin{equation}
    \big| J_0(u^0, U^{\star}(u^0)) - J_0(\widetilde{u}^0, U^{\star}(\widetilde{u}^0) ) \big|  
\leq   
    \tilde{\omega}
        \big(
            \|u^0-\widetilde{u}^0\|_{\mathcal{H}_T^2}
        \big)
.
\end{equation}
Relabelling $C$ as $C^{\prime}$ concludes our proof.
\end{proof}

\subsection{Proof of The Sufficient conditions for Assumption \ref{assm:continuity}\texorpdfstring{ In Proposition~\ref{prop:Sufficient_ContinuityBeforeDiscretization}}{}}

\begin{proof}[{Proof of Proposition~\ref{prop:Sufficient_ContinuityBeforeDiscretization}}]
    We fix $u^0\in \uuu_0$ and choose $u^1,\tilde u^1\in \uuu_1,\,\lambda\in [0,1]$. Denote $X^\lambda_t$ the state controlled by the pair $(u^0,u^\lambda)=(u^0,\lambda u^1+(1-\lambda)\tilde u^1)$ respectively.
Thus, 
\begin{align*}
    &J_1(u^0,u^\lambda)-\lambda J_1(u^0, u^1)-(1-\lambda)J_1(u^0,\tilde  u^1)\\
    &=\E \Big[ \int_0^T L_1( X^\lambda_t,  u^0_t ,  u^\lambda_t ) -\lambda L_1( X^1_t,  u^0_t ,   u^1_t )-(1-\lambda) L_1( X^0_t,  u^0_t , \tilde  u^1_t )dt \Big] \\
     &+\E \Big[ g_1(X^\lambda_T)-\lambda g_1( X^1_T)-(1-\lambda) g_1( X^0_T) \Big] \\         
    &=\E \Big[ \int_0^T L_1( X^\lambda_t,  u^0_t ,  u^\lambda_t ) -\lambda L_1( X^1_t,  u^0_t ,   u^1_t )-(1-\lambda) L_1( X^0_t,  u^0_t , \tilde  u^1_t )dt \Big] \\
     &-\E \Big[ \lambda (g_1( X^1_T)-g_1(X^\lambda_T))+(1-\lambda) (g_1( X^0_T)-g_1(X^\lambda_T)) \Big] \\
    &\leq- \lambda \E \Big[ \int_0^T L_1( X^1_t,  u^0_t ,   u^1_t ) -L_1( X^\lambda_t,  u^0_t ,  u^\lambda_t )dt + \nabla_x g_1( X^\lambda_T)^\top(X^1_T-X^\lambda_T)\Big] \\
     &-(1-\lambda) \E \Big[\int L_1( X^0_t,  u^0_t , \tilde  u^1_t )- L_1( X^\lambda_t,  u^0_t ,  u^\lambda_t )dt+\nabla g_1( X^\lambda_T)^\top(X^0_T-X^\lambda_T)\Big]
\end{align*}
    where we used the the convexity of $g_1$ to obtain the last line. We now provide an upper bound for the last two terms. For fixed $\lambda$, by the definition of $X^\lambda, u^\lambda$ and our assumptions on $H_1$, the function $(y,z)\mapsto \nabla_x H_1(X^\lambda_t,u^0_t,u^\lambda_t,y,z)$ is uniformly Lipschitz continuous and $\nabla g_1(X^\lambda_T)$ is square integrable. Thus, there exists a unique solution $(Y^\lambda_t,Z^\lambda_t)$ to the BSDE
\begin{align}\label{bsde}
    dY^\lambda_t&= -\nabla_x H_1(X^\lambda_t,u^0_t,u^\lambda_t,Y^\lambda_t,Z^\lambda_t)dt+Z^\lambda_tdW_t\\
    Y^\lambda_T&=\nabla g_1(X^\lambda_T)
\end{align}
so that by Ito's formula we have
\begin{align*}
    &\E \Big[\nabla g_1( X^\lambda_T)^\top(X^0_T-X^\lambda_T)\Big]=\E \Big[  {Y^\lambda_T}^\top(X^0_T-X^\lambda_T)\Big]\\
    &=\E \Big[  \int_0^T{Y^\lambda_t}^\top (f(X^0_t ,u^0_t,\tilde u^1_t)-f(X^\lambda_t ,u^0_t, u^\lambda_t)) dt\Big]\\
    &+\E \Big[ Tr\left({Z^\lambda_t}^\top (\sigma(X^0_t ,u^0_t,\tilde u^1_t)-\sigma(X^\lambda_t ,u^0_t, u^\lambda_t))\right)dt\Big]\\
    &-\E \Big[\nabla_x H_1(X^\lambda_t,u^0_t,u^\lambda_t,Y^\lambda_t,Z^\lambda_t) (X^0_t-X^\lambda_t)dt \Big]\\
    &=\E \Big[  \int_0^TH_1(X^0_t ,u^0_t,\tilde u^1_t,Y^\lambda_t,Z^\lambda_t)-H_1(X^\lambda_t,u^0_t ,u^\lambda_t,Y^\lambda_t,Z^\lambda_t) dt\Big]\\
    &-\E \Big[\nabla_x H_1(X^\lambda_t,u^0_t,u^\lambda_t,Y^\lambda_t,Z^\lambda_t) (X^0_t-X^\lambda_t)dt \Big]\\
    &-\E \Big[  \int_0^TL_1(t,X^0_t ,u^0_t,\tilde u^1_t)-L_1(t,X^\lambda_t ,u^0_t,u^\lambda_t) dt\Big].
\end{align*}
Thus, we obtain 
\begin{align*}
    &\E \Big[\nabla g_1( X^\lambda_T)^\top(X^0_T-X^\lambda_T)\Big]+\E \Big[  \int_0^TL_1(X^0_t ,u^0_t,\tilde u^1_t)-L_1(X^\lambda_t ,u^0_t,u^\lambda_t)dt\Big]\\
    &=\E \Big[  \int_0^TH_1(X^0_t ,u^0_t,\tilde u^1_t,Y^\lambda_t,Z^\lambda_t)-H_1(X^\lambda_t,u^0_t ,u^\lambda_t,Y^\lambda_t,Z^\lambda_t) dt\Big]\\
    &-\E \Big[\nabla_x H_1(X^\lambda_t,u^0_t,u^\lambda_t,Y^\lambda_t,Z^\lambda_t) (X^0_t-X^\lambda_t)dt \Big]
\end{align*}
and 
\begin{align*}
    &\E \Big[\nabla g_1( X^\lambda_T)^\top(X^1_T-X^\lambda_T)\Big]+\E \Big[  \int_0^TL_1(X^1_t ,u^0_t, u^1_t)-L_1(X^\lambda_t ,u^0_t,u^\lambda_t)dt\Big]\\
    &=\E \Big[  \int_0^TH_1(X^1_t ,u^0_t, u^1_t,Y^\lambda_t,Z^\lambda_t)-H_1(X^\lambda_t,u^0_t ,u^\lambda_t,Y^\lambda_t,Z^\lambda_t) dt\Big]\\
    &-\E \Big[\nabla_x H_1(X^\lambda_t,u^0_t,u^\lambda_t,Y^\lambda_t,Z^\lambda_t) (X^1_t-X^\lambda_t)dt \Big].
\end{align*}
Combining these equalities, we obtain 
\allowdisplaybreaks
\begin{align*}
&- \lambda \E \Big[ \int_0^T L_1( X^1_t,  u^0_t ,   u^1_t ) -L_1( X^\lambda_t,  u^0_t ,  u^\lambda_t )dt + \nabla_x g_1( X^\lambda_T)^\top(X^1_T-X^\lambda_T)\Big] \\
     &-(1-\lambda) \E \Big[\int L_1( X^0_t,  u^0_t , \tilde  u^1_t )- L_1( X^\lambda_t,  u^0_t ,  u^\lambda_t )dt+\nabla_x g_1( X^\lambda_T)^\top(X^0_T-X^\lambda_T)\Big]\\
     &=\E \Big[  \int_0^TH_1(\lambda X^0_t+(1-\lambda)X^1_t,u^0_t ,u^\lambda_t,Y^\lambda_t,Z^\lambda_t)-\lambda H_1(X^1_t ,u^0_t, u^1_t,Y^\lambda_t,Z^\lambda_t)dt\Big]\\
     &- \E \Big[ \int_0^T(1-\lambda)H_1(X^0_t ,u^0_t,\tilde u^1_t,Y^\lambda_t,Z^\lambda_t)dt\Big]\\
    &+\E \Big[  \int_0^TH_1(X^\lambda_t,u^0_t ,u^\lambda_t,Y^\lambda_t,Z^\lambda_t)-H_1(\lambda X^0_t+(1-\lambda)X^1_t,u^0_t ,u^\lambda_t,Y^\lambda_t,Z^\lambda_t)dt\Big]\\
    &+\E \Big[\int_0^T\nabla_x H_1(X^\lambda_t,u^0_t,u^\lambda_t,Y^\lambda_t,Z^\lambda_t) (\lambda X^1_t+(1-\lambda)X^0_t-X^\lambda_t)dt \Big].
\end{align*}
By the convexity of $H_1$ in $x$ and strong convexity in $u^1$ we have that 
\begin{align*}
         &\E \Big[  \int_0^TH_1(\lambda X^0_t+(1-\lambda)X^1_t,u^0_t ,u^\lambda_t,Y^\lambda_t,Z^\lambda_t)-\lambda H_1(X^1_t ,u^0_t, u^1_t,Y^\lambda_t,Z^\lambda_t)\Big]\\
     &- \E \Big[ \int_0^T(1-\lambda)H_1(X^0_t ,u^0_t,\tilde u^1_t,Y^\lambda_t,Z^\lambda_t)dt\Big]\leq -\frac{\kappa\lambda(1-\lambda)}{2}\E \Big[  \int_0^T|u^1_t-\tilde  u^1_t|^2dt\Big]
\end{align*}
and 
\begin{align*}
    H_1(x,u^0 ,u^1,y,z)-H_1(\tilde x,u^0 ,u^1,y,z)+ \nabla_x H_1(x,u^0 ,u^1,y,z)(\tilde x-x)\leq 0.
\end{align*}
Thus, 
\begin{align*}
    &J_1(u^0,u^\lambda)-\lambda J_1(u^0, u^1)-(1-\lambda)J_1(u^0,\tilde  u^1)\leq-\frac{\kappa\lambda(1-\lambda)}{2}\E \Big[  \int_0^T|u^1_t-\tilde  u^1_t|^2dt\Big]
\end{align*}
which is the strong convexity in $u^1$.

Due to this strong convexity, for all $u^0$, there exists a unique minimizer for the optimal response of the follower that we denote $U^{\star}( u^0)$.
To prove \eqref{eq:contd}, we use the first order optimality condition for $U^{1,*}( u^0)$ which reads
$$J_1(u^0,u^1)-J_1(u^0,U^{\star}( u^0))\geq \frac{\kappa}{2}\E \Big[  \int_0^T|u^1_t-U^{\star}_t( u^0)|^2dt\Big]$$
which is \eqref{eq:contd} for $u^1=U^{\star}(\tilde u^0)$.

To conclude the proof of the Proposition it remains to prove the $1/2$ Holder continuity of the best response function which allows us to verify Assumption \ref{assm:continuity}.
By \eqref{eq:contd}, we have 
\allowdisplaybreaks 
\begin{align}
& \big\| U^{\star}(u^0) - U^{\star}(\tilde u^0) \big\|_{\mathcal{H}_T^2}  \notag\\ 
& \le \frac{2}{\kappa^{1/2}} \Big\{  \big| J_1(\tilde u^0,U^{\star}(\tilde u^0))-J_1( u^0,U^{\star}(u^0)) \big|^{1/2} 
+ \big| J_1( u^0, U^{\star}(\tilde u^0))-J_1( \tilde u^0,U^{\star}(\tilde u^0)) \big|^{1/2} 
\Big\} . 
\label{|U(u0)-U(tu0)|}
   \end{align}
   
By~\eqref{|Ji(u0u1)-Ji(tu0tu1)|<=|u0-tu0|+|u1-tu1|} and \eqref{|J1(u0U(u0))-J1(tu0U(tu0))|<=C|u0-tu0|}, we have   
\begin{align} 
& \big| J_1( u^0, U^{\star}(\tilde u^0))-J_1( \tilde u^0,U^{\star}(\tilde u^0)) \big| 
\leq C \cdot  \|u^0 - \widetilde{u}^0 \|_{\mathcal{H}^2_T} ,  
\label{|J1(u0tildeu1)-J1(tildeu0-tildeu1)|Lip} \\ 
&  \big| J_1(\tilde u^0,U^{\star}(\tilde u^0))-J_1( u^0,U^{\star}(u^0)) \big|  
    < C \cdot \,
     \|u^0 - \widetilde{u}^0 \|_{\mathcal{H}^2_T}   . 
        \label{|J1(u0u1)-J1(tildeu0u1)|Lip}
\end{align} 

Then the desired result follows from \eqref{|U(u0)-U(tu0)|}, \eqref{|J1(u0tildeu1)-J1(tildeu0-tildeu1)|Lip} 
and \eqref{|J1(u0u1)-J1(tildeu0u1)|Lip}.

\end{proof}

\subsection{Additional Background on the Wiener Chaos}
\label{app:Background}

For any time $0\le t\le T$, since the $\sigma$-algebra $\mathcal{F}_t$ is generated by $\{W_{s}\}_{0\le s\le t}$ then, any $u\in L^2(\mathcal{F}_t)$ admits the following \textit{Wiener chaos expansion}
\begin{equation}
\label{eq:WienerChaos}
        u 
    = 
        \mathbb{E}[u] 
        + 
        \sum_{i=0}^{\infty}\, 
            \int_0^T
            \int_0^{s_n}
            \dots
            \int_0^{s_2}
            \,
                f_i(s_n,\dots,s_1)
            \,
                dW_{s_1}\dots dW_{s_n}
\end{equation}
where, for each $i\in \mathbb{N}_+$, the deterministic functions $f_i$ belongs to $L^2(\mathcal{S}_{i,T})$ where $\mathcal{S}_{i,T}\eqdef \{(s_j)_{j=1}^i\in [0,T]^i:\, 0<s_1<\dots<s_i<T\}$.

We consider an alternative description of the Wiener chaos expansion of any random variable $u\in L^2(\mathcal{F}_t)$ for a given $0\le t\le T$, which both extends more easily to multiple dimensions and does not require to lengthy computation of multiple iterated stochastic integrals.
For any $i\in \mathbb{N}$, the Hermite polynomials $(h_i)_{i\in \mathbb{N}}$ are the eigenfunctions of the generator $\frac{d^2}{dx^2}-x\frac{d}{dx}$ of the Ornstein-Uhlenbeck process $dX_t = -X_t +\sqrt{2}dW_t$.  For each $i\in \mathbb{N}_+$, the $i^{th}$ Hermite polynomial $h_i$ is given recursively by Rodrigues' formula as
\[
h_i(x) = \frac{(-1)^i}{i!} e^{x^2/2} \frac{d^i}{dx^i}e^{-x^2/2}
\qquad 
h_0(x) = 1.
\]
The multi-dimensional version of the $i^{th}$ iterated integral in~\eqref{eq:WienerChaos} is given by
\begin{equation}
\label{eq:ith_WienerChaos}
    \sum_{|\alpha|=i}
    \,
        \beta_{i,\alpha_j}
        \prod_{j=1}^{J_i}
        \,
        h_{\alpha_j}\Big(
            \int_0^T\,
                \psi_{i,k}(s)
            \,
            dB_s
        \Big)
\end{equation}
where $\alpha\eqdef (\alpha_1,\dots,\alpha_{J_i})$ is a multi-index consisting of positive integers with $i=|\alpha|\eqdef \sum_{j=1}^{J_i}\, \alpha_i$, $\beta_{i,\alpha_j}\in \mathbb{R}$, and where $(\psi_{i,k})_{i,k\in \mathbb{Z};0\le k,\,\frac{k+1}{2^n}\le t}$ is an orthonormal basis of $L^2([0,t])$.  
Here, we will consider the \textit{Haar (wavelet) system} given by
\[
    \psi_{i,k}(s) \eqdef 2^i\big(
            I_{[t\frac{k}{2^i},t\frac{1+2k}{2^{i+1}})}(s)
        -
            I_{[t\frac{1+2k}{2^{i+1}},t\frac{k+1}{2^i})}(s)
    \big)
.
\]
Since $L^2([0,t])\subset L^2([0,T]) $ we can replace the $t$ in indicator functions with $T$ and we can consider the \textit{Haar (wavelet) system} on the larger space $L^2([0,T])$ where 
 $(\psi_{i,k})_{i,k\in \mathbb{N};0\le k,\,\frac{k+1}{2^i}\le 1}$ and  
 \[
    \psi_{i,k}(s) \eqdef 2^i\big(
            I_{[T\frac{k}{2^i},T\frac{1+2k}{2^{i+1}})}(s)
        -
            I_{[T\frac{1+2k}{2^{i+1}},T\frac{k+1}{2^i})}(s)
    \big)
.
\]
By elementary functional analytic considerations, for each $0\le t\le T$, $L^2(\mathcal{F}_t)$ is closure of the span on the of orthogonal $L^2(\mathcal{F}_t)$ random variables 
\begin{equation}
\label{eq:truncated_WienerChaos}
        \prod_{\tilde{j}=1}^{j}
        \,
        h_{\alpha_{\tilde{j}}}\Big(
            \int_0^{{t}}\,
                \psi_{i,k}(s)
            \,
            dB_s
        \Big)
\end{equation}
where $j\in \mathbb{N}$, and $i,k\in \mathbb{N};0\le k,\,\frac{k+1}{2^i}\le \frac{t}{T}$,
for some $I\in \mathbb{N}$, $\beta_0,\beta_{1,\alpha_1},\dots,\beta_{I,\alpha_{J_I}}\in \mathbb{R}$, and $(f_i)_{i\in \mathbb{N}}$ is an orthonormal basis of $L^2([0,t])$.  
Since $(\psi_{i,k})_{i,k\in \mathbb{N};0\le k,\,\frac{k+1}{2^i}\le \frac{t}{T}}$ is piecewise constant, then the stochastic integrals in~\eqref{eq:truncated_WienerChaos} simplify to 
\[
\int_0^{{t}}\,\psi_{i,k}(s)\,dB_s = 
2^i \,W_{\frac{
{t}
k}{2^i}}
-2^{i+1}\,W_{\frac{
{t}
(1+2k)}{2^{i+1}}}
+2^i\,W_{\frac{
{t}
(k+1)}{2^i}}
\]
where $T\frac{k+1}{2^i}\le t$. 
Thus, the random variables $\big\{
u_{i,j,k}
:\,
j\in \mathbb{N},\,i,k\in \mathbb{N},\,\frac{k+1}{2^i}\le \frac{t}{T}
\big\}\subset L^2(\mathcal{F}_t)$ where 
\begin{equation}
u^{(t)}_{i,j,k}\eqdef 
    \prod_{\tilde{j}=1}^{j}
    h_{\alpha_{\tilde{j}}}\big(
    2^i \,W_{\frac{tk}{2^i}}
    -2^{i+1}\,W_{\frac{t(1+2k)}{2^{i+1}}}
    +2^i\,W_{\frac{t(k+1)}{2^i}}
    \big)
\label{base_l2_ft__1}  
\end{equation}
form an \textit{orthonormal} basis of $ L^2(\mathcal{F}_t)$.  Conveniently, each of the $u_{i,j,k} $ can be computed without any explicit stochastic integration.  We refer to \cite{Nualart_MalCRT_2006} for more details on Wiener Chaos.

Next, we will derive our universal approximation guarantees for our transformer model.

\subsection{Proof of Universal Approximation Theorem\texorpdfstring{~\ref{thrm:UniversalApprox}}{}}
\label{s:Proofs__ss:UAT}

Our main universal approximation theorem relies on the following orthonormal basis of simple processes in $\calH_T^2$, defined by linear combinations of the elementary processes in~\eqref{eq:simple_processes}.

\begin{lemma}[{Orthonormal Basis of $\mathcal{H}^2_T$}]
\label{lem:Orthonormal}
The collection of simple processes \\
$
\mathcal{S}\eqdef \big\{
u_{i,j,k}^{s_1,s_2}
:\,
i,j,k,s_1,s_2\in \mathbb{N},\, s_2+1\leq 2^{s_1},\frac{k+1}{2^i}\le \frac{s_2}{{2^{s_1}}}
\big\}$ where
\[
u_{i,j,k}^{s_1,s_2}(t,\omega)\eqdef \psi_{s_1,s_2}(t)\cdot u_{i,j,k}^{{T}}(\omega)
\]
is an orthonormal basis of $\mathcal{H}^2_T$.
\end{lemma}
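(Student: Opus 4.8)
The statement has two halves: (i) each element of $\mathcal{S}$ really lies in $\mathcal{H}^2_T$ and the family is orthonormal; (ii) $\mathcal{S}$ is total, i.e.\ $\overline{\operatorname{span}}\,\mathcal{S}=\mathcal{H}^2_T$. I would use two inputs recalled above as black boxes: $(\psi_{s_1,s_2})$ is a complete orthonormal system of $L^2([0,T])$, and $\{u^T_{i,j,k}:\tfrac{k+1}{2^i}\le 1\}$ is a complete orthonormal system of $L^2(\mathcal{F}_T)$ (the $t=T$ case of the Hermite--Haar construction of Appendix~\ref{app:Background}, eq.~\eqref{base_l2_ft__1}); I would also use the analogue at a dyadic time $a=Ts_2/2^{s_1}$, that $\{u^T_{i,j,k}:\tfrac{k+1}{2^i}\le\tfrac{s_2}{2^{s_1}}\}$ is a complete orthonormal system of $L^2(\mathcal{F}_a)$, obtained by running the same argument on $[0,a]$.

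For (i), the key point is that the index constraint $\tfrac{k+1}{2^i}\le\tfrac{s_2}{2^{s_1}}$ is exactly the predictability condition for the product. Indeed $u^T_{i,j,k}$ is a function of the Brownian increments over the dyadic interval $[T\tfrac{k}{2^i},T\tfrac{k+1}{2^i}]$, hence $\mathcal{F}_{T(k+1)/2^i}$-measurable, while $\psi_{s_1,s_2}$ is supported on $[T\tfrac{s_2}{2^{s_1}},T\tfrac{s_2+1}{2^{s_1}})$, whose left endpoint is $\ge T\tfrac{k+1}{2^i}$; so on the time-support of $\psi_{s_1,s_2}$ the factor $u^T_{i,j,k}$ is already $\mathcal{F}_t$-measurable, making $t\mapsto\psi_{s_1,s_2}(t)\,u^T_{i,j,k}$ a simple $\mathbb{F}$-predictable process. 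Orthonormality is then a one-line Tonelli/Fubini computation,
\[
\langle u^{s_1,s_2}_{i,j,k},u^{s_1',s_2'}_{i',j',k'}\rangle_{\mathcal{H}^2_T}
=\Big(\int_0^T\psi_{s_1,s_2}(t)\psi_{s_1',s_2'}(t)\,dt\Big)\;\mathbb{E}\big[u^T_{i,j,k}\,u^T_{i',j',k'}\big],
\]
which by the two orthonormality properties equals $\mathbf{1}_{(s_1,s_2)=(s_1',s_2')}\,\mathbf{1}_{(i,j,k)=(i',j',k')}$; unit norm in particular confirms membership in $\mathcal{H}^2_T$.

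For (ii) I would argue by density. Simple $\mathbb{F}$-predictable processes are dense in $\mathcal{H}^2_T$, so it suffices to show that every process $G\,\mathbf{1}_{(a,b]}$, with $0\le a<b\le T$ dyadic and $G\in L^2(\mathcal{F}_a)$, lies in $\overline{\operatorname{span}}\,\mathcal{S}$; by additivity I would reduce to $(a,b]$ a single dyadic interval $I$, so that $a$ is its left endpoint. I would then use the Haar multiresolution identity (the indicator of a level-$(q{+}1)$ dyadic interval equals $\tfrac12$ times the indicator of its dyadic parent plus or minus a multiple of the parent's Haar wavelet) to peel off terms one at a time: each stripped-off term has the form $G'\,\psi_{s_1,s_2}$ with $G'$ measurable with respect to the $\sigma$-field of the left endpoint of $\operatorname{supp}\psi_{s_1,s_2}$, and expanding $G'$ in the Wiener--chaos basis of that $\sigma$-field exhibits $G'\,\psi_{s_1,s_2}$ as a norm-convergent series of elements of $\mathcal{S}$ --- the compatibility constraint $\tfrac{k+1}{2^i}\le\tfrac{s_2}{2^{s_1}}$ being satisfied precisely because $G'$ is adapted to that endpoint. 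Summing the peeled contributions, together with the coarsest (constant-in-time) term, reconstructs $G\,\mathbf{1}_I$.

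The genuine obstacle is this last reduction. The Haar expansion of $\mathbf{1}_I$ in time recruits ``ancestor'' wavelets whose supports stick out to the \emph{left} of $I$, so the corresponding ``ancestor$\,\times\,$chaos'' products need not themselves be predictable even though their sum is, and the naive peeling closes cleanly only when descending into left children, not right children; one also notes that the orthogonal projection of an element of $\mathcal{H}^2_T$ onto the span of a finite set of $\psi$'s generally leaves $\mathcal{H}^2_T$, so the whole decomposition must be carried out inside the predictable subspace. Making the induction close therefore hinges on organizing the peeling so that each residual coefficient stays adapted to the correct dyadic time and the residuals remain norm-summable, with careful accounting of which time scaling-function elements the system $\mathcal{S}$ actually supplies; this bookkeeping is the technical heart of the lemma.
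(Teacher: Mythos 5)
Your proposal takes a genuinely different route from the paper. You argue by direct synthesis: peel Haar wavelets in time, then expand each residual random coefficient in a Wiener--chaos basis. The paper instead argues by duality: it fixes the dense class of simple predictable processes $Z=\sum_{l=1}^m\xi_l\,\mathbbm{1}_{[t_{2l-1},t_{2l})}$ with $t_{2m}<T$, assumes that every pairing $\langle Z,u^{s_1,s_2}_{i,j,k}\rangle_{\mathcal{H}^2_T}$ vanishes, chooses $(\tilde s_1,\tilde s_2)$ so that $\operatorname{supp}\psi_{\tilde s_1,\tilde s_2}$ meets only $[t_1,t_3)$, concludes $\mathbb{E}[\xi_1 u^T_{i,j,k}]=0$ for all $(i,j,k)$ with $\tfrac{k+1}{2^i}\le\tfrac{\tilde s_2}{2^{\tilde s_1}}$, and then passes to $\mathbb{E}[\xi_1 u^{(t_1)}_{i,j,k}]=0$ for \emph{every} $(i,j,k)$ with $\tfrac{k+1}{2^i}\le 1$ by choosing dyadic approximants $\tfrac{k_n+1}{2^{i_n}}\to\tfrac{t_1(k+1)}{T2^i}$, taking $\mathbb{P}$-a.s.\ limits, and invoking uniform integrability of $(\xi_1 u^T_{i_n,j,k_n})_n$. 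This duality detour avoids precisely the synthesis problem your construction runs into.

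Beyond the gap you already flag (the Haar peeling does not close cleanly on right children, because the ancestor wavelet's support extends to the left of the adaptedness time), there is a second, more decisive error you do not notice: the auxiliary claim that $\bigl\{u^T_{i,j,k}:\tfrac{k+1}{2^i}\le\tfrac{s_2}{2^{s_1}}\bigr\}$ is a complete orthonormal system of $L^2(\mathcal{F}_a)$ at the dyadic time $a=Ts_2/2^{s_1}$ is false, and it is not ``obtained by running the same argument on $[0,a]$''. Running the construction on $[0,a]$ yields the \emph{$a$-scaled} family $\{u^{(a)}_{i,j,k}\}$, whose Brownian increments are taken over $a$-dyadic intervals; your claimed system uses $T$-dyadic increments, and the two are genuinely different spanning sets. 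For a concrete counterexample, take $W_a\in L^2(\mathcal{F}_a)$. For the first chaos $j=1$ one has $u^T_{i,1,k}=\int_0^T\psi_{i,k}(s)\,dW_s$, so
\begin{equation*}
\mathbb{E}\bigl[W_a\,u^T_{i,1,k}\bigr]=\int_0^a\psi_{i,k}(s)\,ds=0,
\end{equation*}
because $\psi_{i,k}$ is a mean-zero Haar wavelet supported in $[0,a]$ whenever $T\tfrac{k+1}{2^i}\le a$; and the chaoses $j\ne 1$ are orthogonal to $W_a$ by the chaos grading. Hence $W_a$ is orthogonal to every element of your alleged complete system, which therefore cannot be a basis of $L^2(\mathcal{F}_a)$. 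This is exactly why the paper does \emph{not} expand $\xi_1$ in the restricted $T$-scaled family, but instead shows that the known completeness of the $t_1$-scaled family $\{u^{(t_1)}_{i,j,k}\}$ in $L^2(\mathcal{F}_{t_1})$ can be reached through the limit argument above. Your construction, as written, collapses at the step where $G'\psi_{s_1,s_2}$ is expanded as a series of elements of $\mathcal{S}$, since that series cannot converge to $G'$ for generic $G'\in L^2(\mathcal{F}_a)$.
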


\begin{proof}[Proof of Lemma~\ref{lem:Orthonormal}]
We denote the set of indices 
\begin{align*}
\mathcal{I}& \eqdef  \{(i,j,k,s_1,s_2)\in \mathbb{N}^5 : \frac{k+1}{2^i}\le \frac{s_2}{2^{s_1}}\le 1,
\,
\frac{s_2+1}{2^{s_1}}\le 1\}, \\
\mathcal{I}(s_1,s_2)& \eqdef  \{(i,j,k)\in \mathbb{N}^3 : (i,j,k,s_1,s_2) \in \mathcal{I}\}, \quad (s_1,s_2) \in \mathbb{N}^2.\\
\end{align*}
and observe the following equality
$$\mathcal{S} = \left\{u^{s_1,s_2}_{i,j,k} \right\}_{(i,j,k,s_1,s_2)\in \mathcal{I}}\subset \mathcal{H}^2([0,T]).$$
It is easy to check that $\mathcal{S}$ is orthonormal in $\mathcal{H}^2([0,T])$ as the set $\{\psi_{s_1,s_2} \}_{(s_1,s_2)\in \mathbb{N}^2}$ is orthornormal in $L^2([0,T])$, and for all $(s_1,s_2)\in \N^2$, $\left(u^{T}_{i,j,k}\right)_{(i,j,k)\in \mathcal{I}(s_1,s_2)}$ is orthonormal in $ L^2(\mathcal{F}_{T})$. It remains to show that $\mathcal{H}^2([0,T])$ is the closure of the span of $\mathcal{S}$.

We denote $\hat{\mathcal{H}}_0$ to the set of simple processes $Z\in \mathcal{H}^2([0,T])$ satisfying
 \begin{align}\label{simple.process}
Z \eqdef  \sum_{l=1}^{m} \xi_{l} \mathbbm{1}_{[t_{2l-1},t_{2l})},
\end{align} 
where $m \in \mathbb{N}$, $(t_i)_{i=1}^{m+1}$ is a strictly increasing sequence of real numbers satisfying $t_1=0, t_{2m}<T$, and $\xi_{l} \in L^2(\mathcal{F}_{t_{2l-1}})$, for $1\leq l \leq m$.

We notice that $\bar{\hat{\mathcal{H}}}_0=\mathcal{H}^2([0,T])$. Indeed, we observe that for every simple process $Z:=\sum_{l=1}^m \xi_l\mathbbm{1}_{[t_l,t_{l+1})} $, $t_1 =0, t_{m+1}=T$ can be written as the $\mathcal{H}^2([0,T])$-limit of the sequence $(Z^n)_{n\in\mathbb{N}} \subset \hat{\mathcal{H}}_0$:
\begin{equation}
    Z^n_t = \sum_{l=1}^m \xi_l \mathbbm{1}_{[t_l,t_{l+1}-\frac{1}{n})}. 
\end{equation}
The previous shows that $\bar{\hat{\mathcal{H}}}_0= \bar{\mathcal{H}}_0 = \mathcal{H}^2([0,T])$.
\\
Next, assume that $Z\in \hat{\mathcal{H}}_0$ is such that: for all $ U \in \mathcal{S}$ 
\begin{equation}\label{complete.property}
\int_0^T\mathbb{E}\left[U_tZ_t \right]dt=0.
\end{equation}
 We notice that for any $\tilde{s}_1\in \mathbb{N}$ sufficiently large there exists $\tilde{s}_2\in\mathbb{N}$   satisfying
\begin{align}\label{eq.s1.s2}
&1+\tilde{s}_2\leq 2^{\tilde{s}_1}, \nonumber\\
&t_1 <\frac{\tilde{s}_2T}{2^{\tilde{s}_1}} \leq t_2 < \frac{(1+2\tilde{s}_2)T}{2^{\tilde{s}_1+1}} < \frac{(1+\tilde{s}_2)T}{2^{\tilde{s}_1}} <t_3 . 
\end{align}

Then, for any $(i,j,k) \in \mathcal{I}(\tilde{s}_1,\tilde{s}_2)$, we set
\begin{equation}\label{process.test}
    {U}  \eqdef  u_{i,j,k}^{\tilde{s}_1, \tilde{s}_2}\in \mathcal{S}.
\end{equation}
\\
Plugging in the process \eqref{process.test} into \eqref{complete.property}, we obtain
\begin{equation}\label{result.process.test}
\int_0^T\mathbb{E}\left[{U}_tZ_t \right]dt = 2^{\tilde{s}_1}\left(t_2-\frac{\tilde{s}_2T}{2^{\tilde{s}_1}}\right)\E\left[u^{T}_{i,j,k}\xi_{1} \right] = 0, \quad \forall (i,j,k) \in \mathcal{I}(\tilde{s}_1,\tilde{s}_2).
\end{equation}
Finally, we will prove that the closure of the span of 
$\left(u_{i,j,k}^{T}\right)_{(i,j,k)\in \mathcal{I}(\tilde{s}_1,\tilde{s}_2)}$ contains $L^2\left(\mathcal{F}_{t_1}\right)$. Using \eqref{eq.s1.s2}, we observe that for all $(i,k) \in \mathbb{N}^2$, $1+k\leq 2^{i}$ the following inequality holds:
\begin{equation*}
    0\leq \frac{t_1(k+1)}{T2^i} < \frac{\tilde{s}_2}{2^{\tilde{s}_1}} \leq 1.
\end{equation*}
Using that the set of the dyadic rationals is dense in $[0,1]$, there exists a sequence $(i_n,k_n)\in \mathbb{N}^2$, $ k_n+1\leq 2^{i_n},$ satisfying: 
\begin{align*} 
    \lim_{n\rightarrow \infty}\frac{\left(k_n+1\right)}{2^{i_n}} &= \frac{t_1(k+1)}{T2^i}, \\
    \frac{t_1(k+1)}{T2^i}&\leq \frac{\left(k_n+1\right)}{2^{i_n}} \leq \frac{\tilde{s}_2}{2^{\tilde{s}_1}} , \quad n \in \mathbb{N}.
\end{align*}
Hence, for all $j \in \mathbb{N}$, $\left(u^T_{i_n,j,k_n}\right)_{n\in \mathbb{N}} \subset \mathcal{S}$. 
By continuity of the paths of the Brownian motion and the Hermite polynomials, we obtain that $(u^{T}_{i_n,j,k_n})_{n\in \mathbb{N}}$ converges  $\mathbb{P}$-a.s. to $u^{(t_1)}_{i,j,k}$. Moreover, we observe that  $\left(\xi_1u^T_{i_n,j,k_n} \right)_{n\in \mathbb{N}}$ is uniformly integrable. Indeed, for $0 <\epsilon_0$ small enough, applying H\"{o}lder's inequality, there exist constants $p_1:=\frac{3}{2(1+\epsilon_0)} $, and $p_2:= \frac{1}{1-\frac{1}{p_1}}$ such that
\allowdisplaybreaks
\begin{align*}    
& \sup_{n\in \mathbb{N}}\mathbb{E}\left[\left|\xi_1u^{T}_{i_n,j,k_n}\right|^{1+{\epsilon_0}}\right] \\
&\leq \mathbb{E}\left[|\xi_1|^{(1+\epsilon_0)p_1}\right] \sup_{n\in \mathbb{N}} \mathbb{E}\left[\prod_{\tilde{j}=1}^{j}\left|h_{\tilde{j}}\left(
    2^{i_n} \,W_{\frac{Tk_n}{2^{i_n}}}
    -2^{i_n+1}\,W_{\frac{T(1+2k_n)}{2^{i_n+1}}}
    +2^{i_n}\,W_{\frac{T(k_n+1)}{2^{i_n}}}\right)\right|^{p_2}\right] \\
    &<\infty. 
\end{align*} 
We therefore deduce that
\begin{equation}
   \mathbb{E} \left[\xi_1u^{(t_1)}_{i,j,k} \right] =  \lim_{n\rightarrow \infty} \mathbb{E} \left[\xi_1u^T_{i_n,j,k_n} \right] =  0.
\end{equation}
Using completeness of the basis $\{u^{(t_1)}_{i,j,k}\}_{(i,j,k) \in \mathbb{N}^3}$ in $L^2(\mathcal{F}_{t_1})$ we obtain that 
$$\xi_1 = 0, \quad \mathbb{P} \text{-}a.s.$$ 

Finally, we repeat the same argument in \eqref{eq.s1.s2} for every addend in \eqref{simple.process}, obtaining 
$$Z = 0, \quad dt\otimes \mathbb{P} -a.e.$$
Hence, 
\begin{equation}\label{simple.cap.h0}
    \overline{\operatorname{span}(\mathcal{S})}^\perp \cap \hat{\mathcal{H}}_{0} = \{ 0 \},
\end{equation}
where $\mathcal{H}_0$ denotes the set of simple processes in $\mathcal{H}^2([0,T])$.
Finally, using the decomposition 
$$\mathcal{H}^2([0,T])= \overline{\operatorname{span}(\mathcal{S})}\oplus \overline{\operatorname{span}(\mathcal{S})}^{\perp},$$
and \eqref{simple.cap.h0}, we have that $\hat{\mathcal{H}}_0\subset \overline{\operatorname{span}(\mathcal{S})}$. The later implies that $\overline{\operatorname{span}(\mathcal{S})}= \mathcal{H}^2([0,T])$.
\end{proof}

Generally, deep learning faces the curse of dimensionality in finite-dimensions; see e.g.~\cite{lanthaler2023curse}.  Nevertheless, the impact of infinite-dimensionality on the parametric complexity of deep learning models can be reduced by considering the following \textit{trainable} version of the ``super-expressive'' activation function of \cite{zhang2008improved} designed to exploit the bit-extraction mechanism of \cite{bartlett2019nearly}. 
The next lemma provides quantitative rates for attentional neural operators with the activation function~\eqref{eq:supreexpressive}; since, in that case, they are not overwhelmingly large (as is the case approximation of general Lipschitz non-linear operators).

\begin{lemma}[Approximation Of Lipschitz Operators with By Attentional Neural Operator]
\label{lem:UAT_1}
Let $\kkk_0\subseteq \mathcal{H}_T^2$ be compact, $F:\calH_T^2\to\calH_T^2$ be an $L$-Lipschitz (non-linear) operator, and consider respective ``dimension reduction'' and ``approximation'' errors $\varepsilon_D,\varepsilon_A>0$.  There exists an attentional neural operator $\hat{F}:\calH_T^2\to\calH_T^2$ satisfying
\[
    \sup_{u\in \kkk_0}\,
        \|
           F(u)
            -
            \hat{F}(u)
        \|_{\calH^2_T}
\le
    \varepsilon_D
    +
    \varepsilon_A
.
\]
Furthermore, the complexity of the neural operator $\hat{F}$ is recorded in Table~\ref{tab:complexity__lemma}.
\end{lemma}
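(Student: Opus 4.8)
The plan is to realise $\hat F$ along the encoder--processor--decoder template of Definition~\ref{defn:NO}, splitting the error budget into a finite-dimensional \emph{reduction} error absorbed into $\varepsilon_D$ and a finite-dimensional \emph{approximation} error absorbed into $\varepsilon_A$. First I would reduce to a finite-dimensional Lipschitz problem. Since $\mathcal{H}_T^2$ is separable with orthonormal basis $\mathcal{S}=\{s^{(i)}\}_{i\in\mathbb{N}}$ (Lemma~\ref{lem:Orthonormal}), the orthogonal projections $p_d$ onto $\operatorname{span}\{s^{(i)}\}_{i=1}^d$ converge pointwise to the identity, and this convergence is uniform on the compact set $\kkk_0$ because $\kkk_0$ is totally bounded and $u\mapsto\|u-p_d u\|_{\mathcal{H}_T^2}$ is $1$-Lipschitz. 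The image $F(\kkk_0)$ is compact too, so $\sup_{v\in F(\kkk_0)}\|v-p_{d'}v\|_{\mathcal{H}_T^2}\to 0$. Choosing $d'$ so that this quantity is $\le\varepsilon_D/2$ and then $d\ge d'$ so that $L\,\sup_{u\in\kkk_0}\|u-p_d u\|_{\mathcal{H}_T^2}\le\varepsilon_D/2$, the $L$-Lipschitz bound on $F$ gives $\sup_{u\in\kkk_0}\|F(u)-p_{d'}F(p_d u)\|_{\mathcal{H}_T^2}\le\varepsilon_D$; so it remains to approximate $u\mapsto p_{d'}F(p_d u)$ on $\kkk_0$ to within $\varepsilon_A$ by a neural operator.

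Next I would rewrite $p_{d'}F p_d$ through the architecture. Identifying $\operatorname{span}\{s^{(i)}\}_{i=1}^d$ with $\mathbb{R}^d$, this map factors as the encoder $\mathcal{E}(u)=(\langle u,s^{(i)}\rangle)_{i=1}^d$ of~\eqref{eq:encoder}, followed by the map $\Phi:\mathbb{R}^d\to\mathbb{R}^{d'}$ with $\Phi(x)_m=\langle F(\sum_{i\le d}x_i s^{(i)}),s^{(m)}\rangle$, followed by the linear decoding $y\mapsto\sum_{m\le d'}y_m s^{(m)}$; on the compact set $\mathcal{E}(\kkk_0)$ the map $\Phi$ is $L$-Lipschitz, and after a Lipschitz extension we may take it Lipschitz and bounded on all of $\mathbb{R}^d$. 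To fit the linear decoding into the $\mathcal{H}_T^2$-attentional decoder~\eqref{eq:decoder} I would set $Q=d'$ and $V^{(n,q)}=s^{(q)}$ for every $n\le N$ and $q\le d'$, which makes $\mathcal{D}(w,x)=\sum_{q=1}^{d'}\big(\sum_{n=1}^N\operatorname{softmax}(w)_n\mathcal{V}_{n,q}(x)\big)s^{(q)}$; since $\mathcal{S}$ is orthonormal, the resulting $\mathcal{H}_T^2$-error equals the Euclidean error of the coefficient vector $\big(\sum_n\operatorname{softmax}(f(\mathcal{E}(u)))_n\mathcal{V}_{n,q}(\mathcal{E}(u))\big)_q$ against $\Phi(\mathcal{E}(u))$.

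Finally I would approximate the finite-dimensional map. For a clean existence argument one may take $N=1$, so that the softmax weight is identically $1$ and the problem reduces to approximating the Lipschitz $\Phi$ on $\mathcal{E}(\kkk_0)$ within $\varepsilon_A$ by the MLP $\mathcal{V}_{1,\cdot}\in\mathcal{NN}_{J,W:d,d'}$ --- the classical finite-dimensional universal-approximation problem; with a Kidger--Lyons activation (Example~\ref{ex:Activation_Standard}) this holds with bounded width and $\varepsilon_A$-dependent depth, while with the super-expressive activation~\eqref{eq:supreexpressive} the bit-extraction estimates of~\cite{zhang2022deep,bartlett2019nearly}, in the form used for $\operatorname{ReLU}$-type networks in~\cite{ShenYangZhang_2022_OptimalReLU}, give a network whose size is nearly dimension-free and only logarithmic in $\varepsilon_A^{-1}$. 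For the finer entries of Table~\ref{tab:complexity__lemma} one would instead keep $N>1$, cover $\mathcal{E}(\kkk_0)$ by $N$ small cells, use $f$ to emit a sharp logit vector whose softmax concentrates on the active cell and $\mathcal{V}$ to carry the local bit-extraction approximations, so that the softmax layer acts as a smooth partition of unity and $N$ genuinely enters the complexity. Summing parameters through the count $JW^2+NQ$ of Definition~\ref{defn:NO} then yields the stated uniform bound $\varepsilon_D+\varepsilon_A$ together with Table~\ref{tab:complexity__lemma}.

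The main obstacle I expect is twofold. The dimension-reduction step is only \emph{qualitative} for an arbitrary compact $\kkk_0$, so the explicit dependence of $d,d'$ (and hence of the final complexity) on $\varepsilon_D$ requires control of how fast the tail $\sum_{i>d}\langle u,s^{(i)}\rangle^2$ decays over $\kkk_0$, which is precisely what the structural hypotheses of Section~\ref{s:EfficientRates} are designed to supply. Second, the attention/softmax layer must be threaded so that it neither costs expressivity nor inflates the parameter count, which is what forces the somewhat rigid choices $V^{(n,q)}=s^{(q)}$, $Q=d'$, and the soft-partition role of $f$ above; getting sharp constants in these choices (rather than just existence) is the delicate part of populating Table~\ref{tab:complexity__lemma}.
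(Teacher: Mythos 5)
Your $N=1$ construction is a valid, and noticeably more elementary, proof of the first displayed inequality: trivialising the softmax to the constant weight $1$, choosing $Q=d'$ and $V^{(n,q)}=s^{(q)}$, and letting the values network $\mathcal{V}_{1,\cdot}$ absorb the entire finite-dimensional Lipschitz approximation is consistent with Definition~\ref{defn:NO} and, combined with your dimension-reduction step (which is correct, using $\|p_{d'}\|_{op}\le 1$ in the triangle inequality), gives the $\varepsilon_D+\varepsilon_A$ bound. This is genuinely different from the paper's proof, which makes the attention layer do substantive work: after encoding and quantizing with an $\varepsilon_1$-net $\{x_n\}_{n=1}^{N}$, the paper uses the Lipschitz weak random projection of Brue--Di Marino--Stra onto the Wasserstein space $\mathcal{P}_1(\{x_n\},\mathcal{W}_1)$, then a barycentric decoding map $\eta_N$ into $\calH_T^2$, then a mollification $H(t_{\varepsilon_3},\cdot)$ into $\operatorname{int}(\Delta_N)$ followed by the right-inverse $R$ of $\operatorname{softmax}\circ W$, and only then approximates the resulting simplicial target $f^{(3)}$ by the MLP $f$. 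Your route trades all of that for a single universal-approximation call on $\Phi$; what the paper's route buys is exactly the separation of the budget between the MLP $f$ (whose complexity depends only on $\varepsilon_D$) and the attention size $N$ (which carries all the $\varepsilon_A$ dependence), as is visible in Table~\ref{tab:complexity__lemma}.

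Because of this, your argument does not actually establish the second part of the lemma. With $N=1$ and $\mathcal{V}$ doing the work, the parameter count $JW^2+NQ$ concentrates in $JW^2$ and carries the full $\varepsilon_A$ dependence, rather than the split the Table records (depth/width/No.\,Param.\ with no $\varepsilon_A$ dependence; all $\varepsilon_A$ dependence in $N$). Your closing sketch of the $N>1$ case (cover $\mathcal{E}(\kkk_0)$, emit sharp logits, carry local approximations in $\mathcal{V}$) gestures at the right idea but does not supply the key quantitative ingredients the paper needs: the doubling-constant bound $L_{\Pi}\lesssim \lceil 1/\alpha\rceil\, d$ from the weak random projection, the $\sqrt N$ Lipschitz constant of the barycentric map $\eta_N$, the explicit amount of mollification $t_{\varepsilon_3}$, and the Lipschitz constant $\tilde L_{f^{(3)}\varepsilon_3,N}$ of the log-ratio inverse $R$. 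You correctly flag that populating Table~\ref{tab:complexity__lemma} is ``the delicate part,'' but that is precisely what the lemma claims, so as written the proposal proves only the approximation inequality, not the complexity conclusion.
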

We provide explicit quantitative parameter estimates in the special cases where the compact set $\kkk_0$ and the target neural operator are compatible.  We consider the following notion of a small compact subset of a Banach space.
\begin{definition}[$(r,f)$-Exponentially Ellipsoidal]
\label{defn:exp_width}
Let $r>0$, $f:\calH_T^2\to\calH_T^2$, and fix an orthonormal basis $\{u_i\}_{i=0}^{\infty}$ of $\mathcal{H}_T^2$.  A subset $\kkk \subseteq \calH_T^2$ is of $(r,f)$-exponentially width if the following holds for each $u\in \kkk$:  
\begin{itemize}
    \item[(i)] $u=\sum_{i=1}^{\infty}\, \beta_iu_i$ and $|\beta_i|\lesssim e^{-ri}$,
    \item[(ii)] $f(u)=\sum_{i=1}^{\infty}\, c_iu_i$ and $|c_i|\lesssim e^{-ri}$.
\end{itemize}
\end{definition}

Exponentially ellipsoidal compact sets can be efficiently approximated by low-dimensional representations arising from projections onto the relevant basis.  However, they may still be large in metric entropy (i.e.\ they may be difficult to cover by a few small metric balls).  This is not the case if there is something akin to a latent ``low-dimensional submanifold'' on which the data/approximation is focused.  The following definition makes this rigorous for our infinite-dimensional setting.
\begin{definition}[$(r,f,d)$-Exponential Manifold]
\label{defn:exp_manifold}
Let $r>0$, $f:\calH_T^2\to\calH_T^2$, fix an orthonormal basis $\{u_i\}_{i=0}^{\infty}$ of $\mathcal{H}_T^2$, and let $\widetilde{\kkk}$ be an $(r,f)$-exponentially ellipsoidal subset of $\mathcal{H}_T^2$.  A compact subset $\kkk \subseteq \widetilde{\kkk}$ is said to be an $(r,f,d)$-Exponential Manifold if there exists a $1$-Lipschitz ``latent parameterization'' map $\pi:\mathbb{R}^d\to \mathcal{H}_T^2$ and $\kkk=\pi(\{z\in \mathbb{R}^d:\,\|z\|\le 1\})$.
\end{definition}
Notably, the map $\pi$ need not be known nor be injective (as in~\cite{KratsiosTakashiLassas_CoDOperatorLearning_2024}), nor does it need to be inverted by the deep learning model (either explicitly or implicitly during the approximation theorem). Instead, it simply encodes (in a possibly non-linear way) a low-dimensional stricture into the compact set of controls, allowing for an efficient approximation by controlling the entropy number, see e.g.~\cite{CarlAmazingPaper,LorentzEntropApprox,petrova2023lipschitz}, of the compact set on which the approximation is performed number.

\begin{table}[H]
    \centering
    \begin{tabular}{@{}lll@{}}
    \toprule
    \textbf{Param.} & \textbf{Example~\ref{ex:Activation_Superexpressive}} & 
\textbf{Example~\ref{ex:Activation_Standard}} \\
    \midrule
    No.\ Param & 
    $
    \mathcal{O}\Big(
        \varepsilon_D^{-3\ln(C)/r}
        \,
        \ln\big(
            \varepsilon_D^{-1/r}
        \big)^4
    \Big)
    $
    & 
    Finite
    \\
    Depth & 
    $
    \mathcal{O}\Big(
        \ln(\varepsilon_D^{-1/r})
        \,
        \varepsilon_D^{-\ln(C)/r}
    \Big)
    $
    &
    Finite
    \\
    Width & 
    $
            \mathcal{O}\big(
                \varepsilon_D^{
                -\ln(C)/r}
            \big)
    $
    &  
    $
    \mathcal{O}\big(
                d^{d+1} N^{2d+2} \varepsilon_D^{-3d-3}
            \big)
    $
    \\
    \midrule
    Decoding Dim.\ ($Q$) & $\mathcal{O}(\ln(\varepsilon_D^{-1/r}))$ 
    & Finite\\
    Encoding Dim.\ ($d$) 
    & $\mathcal{O}(\varepsilon_D^{1/(1-r)}) $ & Finite\\
    Att. Complexity ($N$) & 
    $
    \big(\tilde{c}\,\varepsilon_A^{-1}\,\ln(\varepsilon_D^{-1/r})^{1/2} \big)^{c(\ln(\varepsilon_D^{-1/r})}
    $
    &  
    Finite
    \\
    \bottomrule
    \end{tabular}
    \caption{\textbf{Complexity of the neural operator.} 
    \textit{Case 1:} $\kkk$ is an $(r,f,d)$-exponential manifold in controls in $\mathcal{H}_T^2$ and $\sigma$ is the super-expressive activation function with neuron-specific skip-connections in~\eqref{ex:Activation_Superexpressive}; $c,\tilde{c}>0$ are absolute constants.  \\
    \textit{Case 2:} $\kkk$ is an arbitrary compact subset of controls in $\mathcal{H}_T^2$ and $\sigma$ is the standard non-trainable activation functions of Example~\eqref{ex:Activation_Standard}.}
        \label{tab:complexity__lemma}
\end{table}

The error $\varepsilon_D>0$ in Table~\ref{tab:complexity__lemma} expresses the ``dimension reduction'' error resulting from the encoding ($\mathcal{E}$) and decoding ($\mathcal{D})$ maps used in the definition of our attentional neural operator, in Definition~\ref{defn:NO}.  That is, $\varepsilon_D$ expresses the error made in encoding infinite dimensional objects, namely $\mathbb{F}$-adapted processes, into finite-dimensional objects, namely vectors in some Euclidean spaces.  Once the neural operator has implicitly transformed the approximation problem as an approximation problem between finite-dimensional spaces, it is approximated by the MLP ($f$) between the encoding and decoding layers of the attentional neural operator.  
The error $\varepsilon_A>0$ in Table~\ref{tab:complexity__lemma} expresses the error incurred in this finite-dimensional approximation step.

\begin{proof}[{Proof of Lemma~\ref{lem:UAT_1}}]
Fix respective ``dimension reduction'' and ``approximation'' errors $\varepsilon_D,\bar{\varepsilon}_A>0$. 

\noindent\textbf{Step 1 - Finite Dimensional Encoding}
\hfill\\
Enumerate $\mathcal{S}=\{s_i\}_{i=1}^{\infty}$, where $\mathcal{S}$ is as in Lemma~\ref{lem:Orthonormal}.
For any $d\in \N$ (which we fix retroactively) define the $1$-Lipschitz encoder $\mathcal{E}_d:\mathcal{H}_T^2\to \R^d$ given, for each $u\in\mathcal{H}_T^2$, by
\[
        \mathcal{E}_d(u_{\cdot})
    \eqdef 
        (\langle u,s_j \rangle_{\calH_T^2})_{j=1}^d
    .
\]
Consider its right-inverse $\iota_d:\R^d\to \calH_T^2$ given, for each $x\in \R^d$, by
\[
    \iota_d(x)
    \eqdef 
    \sum_{i=1}^d\,x_i\,s_i
.
\]
Observe that $\iota_d$ is an isometric embedding; we will come back to this point shortly. 

\begin{itemize}
    \item \textbf{$\kkk_0$ is the exponentially ellipsoidal $\kkk$ in Definition~\ref{defn:exp_width}:}
    By the exponential decay condition in Definition~\ref{defn:exp_width}, we have that: for each $u\in \kkk$, with representation $u=\sum_{i=1}^{\infty}\,\beta_i\,s_i$ the following error estimate holds by orthonormality of the $(s_i)_{i=1}^{\infty}$
\begin{equation}
\label{eq:basis_truncation}
\begin{aligned}
    \big\|
        u
        -
        \iota_d\circ \mathcal{E}_d(u)
    \big\|_{\calH^2_T}^2
=& 
    \big\|
            \sum_{i=1}^{\infty}\,\beta_i\,s_i
        -
            \iota_d\circ \mathcal{E}_d(u)
    \big\|_{\calH^2_T}^2
\\
= &
    \sum_{i=1}^{\infty}\,
        |\beta_i|^2\,
        I_{i\le d}\,
        \|s_i\|^2_{\calH_T^2}
\\
= &
    \sum_{i=d+1}^{\infty}\,
        |\beta_i|^2
\\
\le & 
    \frac{C\,e^{-rd}}{1-e^{-r}} \eqdef \tilde{C}_K\,e^{-rd}
\end{aligned}
\end{equation}
where $\tilde{C}_K\eqdef C/(1-e^{-r})$.  
Fix $\varepsilon_0>0$.
We now can retroactively set $d\eqdef 
\ln\Big(
\frac{2^r\tilde{C}^r_K}{(1-e^{-r})^r}\, \varepsilon_0^{-r}
\Big)
=
\ln(C_K\varepsilon_0^{-r})
\in \mathcal{O}(\ln(\varepsilon_0^{-1/r}))
$ where $C_K\eqdef 
\Big(\frac{C}{3L(1-e^{-r})}\Big)^{1/r}>0
$.  
    \item \textbf{Exponential Sub-manifold:} If $\kkk_0$ satisfies Definition~\ref{defn:exp_manifold}, then this case is implied by the previous case (i.e.\ that of exponentially ellipsoidal compacta),
    \item \textbf{General $\kkk_0$:} If $\kkk_0$ is general, then by the $1$-bounded approximation property (e.g.\ see \cite{Szarek_BAPNoBasis_1987}) of Hilbert spaces with orthonormal bases (which are simply Banach spaces with Schauder bases), for every $\epsilon_0>0$ there is some $d\in \N$ for which $\sup_{x\in \kkk_0}\, \|x-\mathcal{E}_d(x)\|\le \epsilon_0$.
\end{itemize}
In each case, we have that
\begin{equation}
\label{eq:encoding_error}
\sup_{u\in \kkk_0}\, \|u-\iota_d\circ \mathcal{E}_d(u)\| \le \varepsilon_0
.
\end{equation}

\noindent Let $L\ge 0$ denote the optimal Lipschitz constant of $F$.
We note that the map 
\[
    f^{(1)}\eqdef F\circ \iota_d:\R^d\to \mathcal{H}_T^2
\]
is $(L,\alpha)$-H\"{o}lder since $F$ is $(L,\alpha)$-H\"{o}lder
and since is $1$-Lipschitz.  In particular, the Lipschitz constant of $f^{(1)}$ is independent of $d$.  

\noindent\textbf{Step 2 - Quantization of The Image and The Domain:}

Since $\kkk_0$ is compact and since $\mathcal{E}_d$ is continuous, then $\mathcal{E}_d(\kkk_0)$ is compact and thus $\mathcal{E}_d(\kkk_0)$ is totally bounded.  
Therefore, for every $\varepsilon_1>0$ there exists a finite subset $\{x_n\}_{n=1}^{N_{\varepsilon_1}}\subseteq \mathcal{E}_d(\kkk_0)$, of minimal cardinality $N\eqdef N_{\varepsilon_1}$ (a so-called minimal $\varepsilon_1$-net) such that:
\begin{equation}
\label{eq:packing_in_K}
\max_{x\in \kkk_0}\,\min_{n=1,\dots,N_{\varepsilon_1}}\, \|x-x_n\|_{2} 
< 
\Big(
\frac1{L}
    \frac{\varepsilon_1}{3}
\Big)^{1/\alpha}
.
\end{equation}
Since, $f^{(1)}$ is an $L$-Lipschitz surjection of $\kkk_0$ onto $f(\kkk_0)$ then~\eqref{eq:packing_in_K} implies that
\begin{equation}
\label{eq:packing_in_fK}
\begin{aligned}
\begin{aligned}
    \max_{x\in \kkk_0}\,\min_{n=1,\dots,N_{\varepsilon_1}}\, 
        \|f^{(1)}(x)-f^{(1)}(x_n)\|_{\calH_T^2} 
\le &
    L\,\max_{x\in \kkk_0}\,\min_{n=1,\dots,N_{\varepsilon_1}}\, \|x-x_2\|
    ^{\alpha}
    _2
\\
<  
&
        L \biggl(
            \Big(
                \frac{\varepsilon_1}{3L} 
            \Big)^{1/\alpha}
        \biggr)^{\alpha}
    = 
        \frac{\varepsilon_1}{3}
.
\end{aligned}
\end{aligned}
\end{equation}
By Lemma~\ref{lem:Orthonormal}, $(s_i)_{i=1}^{\infty}$ is an orthonormal basis of the Hilbert space $\calH_T^2$.  Therefore, it realizes the $1$-bounded approximation property. This means that since $F(\kkk_0)$ is compact then, for each $Q\in \N$
\begin{equation}
\label{eq:proj}
    \max_{y\in F(\kkk_0)}\,
        \big\|
            y
            -
            P_Q(y)
        \big\|_{\calH_T^2}
    \,\overset{N\mapsto \infty}{\to}\,
    0
    \mbox{ and }
    \|P_Q\|_{op}\le 1
\end{equation}
where $P_Q:\calH_T^2\to \calH_T^2$ is the (rank $Q$) orthogonal projection operator of $\calH^2_T$ onto $\operatorname{span}(\{s_i\}_{i=1}^Q)$; that is, $P_{Q}(u)\mapsto \sum_{i=1}^Q\, \langle s_i,u\rangle_{\calH_T^2}$; where $\|\cdot\|_{op}$ denotes the operator norm.  Thus, for each $\varepsilon_2>0$ (to be fixed retroactively) there exists some $Q\eqdef Q_{\varepsilon_2}\in \N$ for which
\begin{equation*}
    \max_{y\in F(\kkk_0)}\,
        \big\|
            y
            -
            P_{Q}(y)
        \big\|_{\calH_T^2}
    \le 
        \varepsilon_2
.
\end{equation*}
In the special case where $F(\kkk_0)$ satisfies Definition~\eqref{defn:exp_width} then, by a similar computation to Step $1$~\eqref{eq:basis_truncation}, we obtain the following bounds on $Q\eqdef Q_{\varepsilon_2}$:
\begin{itemize}
    \item \textbf{Exponentially Ellipsoidal $\kkk_0$:} $Q\in \mathcal{O}(\ln(\varepsilon_D^{-1/r} ))$,
    \item \textbf{Exponential Sub-manifold:} As before, if $\kkk_0$ satisfies Definition~\ref{defn:exp_manifold}, then this case is implied by the previous case,
    \item \textbf{General $\kkk_0$:} $Q\to \infty$ as $\varepsilon_2\to 0$.
\end{itemize}

\noindent
\textit{To summarize this step, the set $\{x_n\}_{n=1}^{N}$ discretized $\mathcal{E}_d(\kkk_0)$ and the set $\{y_n\}_{n=1}^{N}$ discretized the ``finitely parameterized'' image of $\kkk_0$ under $F$.}

\noindent\textbf{Step 3 - Simplicialization of Target Function:}
\hfill\\
\textit{Our next objective is to replace the target function $F$, with a function which maps between $\mathcal{E}_d(\kkk_0)$ to an $N$-simplex and which, informally speaking, is an approximate continuous selection to the nearest neighbour problem
\[
    x\mapsto \underset{n=1,\dots,N}{\operatorname{argmin}}\, \|F(x)-y_n\|
    .
\]
First, we construct an ``projection-like/extremal'' version of this solution, as in Step 4 of the proof of \citep[Theorem 3.8]{acciaio2023designing}.  In the second step, we ``mollify'' that function to make it comparable with the softmax operation.}

Fix $\varepsilon_3>0$.
Let $\mathcal{P}_1(\{x_n\}_{n=1}^{N},\mathcal{W}_1)$ denote the $1$-Wasserstein space over $\{x_n\}_{n=1}^{N}$ with respect to the 
$\alpha$-snowflaked of the
Euclidean distance $\|\cdot\|_2^{\alpha}$ on the inherited finite set $\{x_n\}_{n=1}^{N}$;
; i.e.\ the metric $\mathbb{R}^d\times \mathbb{R}^d\ni (x,\tilde{x})\mapsto \|x-\tilde{x}\|_2^{\alpha}$ {restricted to the set $\{x_n\}_{n=1}^N$.}
By \citep[Theorem 3.2]{BrueDiMarinaStra_JFA_RandProj_2021}, there exists a Lipschitz map (a so-called weak random projection) $\Pi:(\mathcal{E}_d(\kkk_0),\|\cdot\|_2)\to \mathcal{P}_1(\{x_n\}_{n=1}^{N},\mathcal{W}_1)$ with the property that: for each $x\in \mathcal{E}_d(\kkk_0)$ if $x\in \{x_n\}_{n=1}^{N}$ then $\Pi(x)=\delta_x$; where $\delta_x$ is the pointmass on $x$.  Furthermore, the Lipschitz constant $L_{\Pi}$ of $\Pi$ is at-most $c
\log_2(C_{(\{x_n\}_{n=1}^N,\|\cdot\|_2
^{\alpha}
)})$ where $c>0$ is an absolute constant and $C_{(\{x_n\}_{n=1}^N,\|\cdot\|_2
^{\alpha}
)}>0$ is the doubling constant of the set $\{x_n\}_{n=1}^N$ with respect to the $\alpha$-snowflake of the Euclidean distance restricted to $\{x_n\}_{n=1}^N$. By \citep[Lemma 9.3]{Robinson_DimEmbAtr_Book__2011}, since the inclusion of $\{x_n\}_{n=1}^{N}$ into $\mathbb{R}^d$ is an isometric embedding (with respect to the Euclidean distance on $\R^{d}$) then the doubling constant of $(\{x_n\}_{n=1}^{N},\|\cdot\|_2)$ is no larger than that of $(\mathbb{R}^d,\|\cdot\|_2)$.  By \citep[Lemma 9.3]{Robinson_DimEmbAtr_Book__2011}, the doubling constant of $\R^{d}$ in the Euclidean metric is $2^{d+1}$.  
Thus, the first statement in \citep[Lemma 7.1]{acciaio2023designing} implies that doubling 
of $(\{x_n\}_{n=1}^N,\|\cdot\|_2^{\alpha})$ is at-most equal to the doubling constant of $\mathbb{R}^d$ to the power of 
$
\lceil 
    \frac1{\alpha} 
\rceil.
$
Hence, 
the Lipschitz constant $L_{\Pi}$ of $\Pi$ is
\begin{equation}
\label{eq:Lip_const_rand_proj}
        L_{\Pi}
    \le 
        c
        \log_2(C_{(\{x_n\}_{n=1}^N,
        \|\cdot\|_2
        ^{\alpha}
        )})
    \le
        c
        \Big\lceil 
            \frac1{\alpha} 
        \Big\rceil
        \log_2(C_{(\{x_n\}_{n=1}^N,
        \|\cdot\|_2
        )})
    \le
        c
        \Big\lceil 
            \frac1{\alpha} 
        \Big\rceil
        (d+1)
    \le 
        \tilde{c}
        {\Big\lceil 
            \frac1{\alpha} 
        \Big\rceil}
        d
    \eqdef 
        C_{\Pi}
\end{equation}
where $\tilde{c}\eqdef 2\max\{1,c\}>0$.

As shown in \citep[Equations (41)-(46)]{acciaio2023designing}, the map $\iota_N:\mathcal{P}(\{x_n\}_{n=1}^{N},\mathcal{W}_1)\ni \mu=\sum_{n=1}^{N}\,w_n\delta_{x_n}\to (w_n)_{n=1}^{N}\in (\Delta_{N},\|\cdot\|_2)$ is $\frac{2}{\varepsilon_1}$-Lipschitz since the minimal distance between any distinct pairs of points in $\{x_n\}_{n=1}^{N}$ is $\varepsilon_1$.  
Together with the right-hand side of~\eqref{eq:Lip_const_rand_proj}, this shows that the $\alpha$-H\"{o}lder
constant $L_{f^{(2)}}$ of composite map $f^{(2)}:\iota_N\circ \Pi_N:\mathcal{E}_d(\kkk_0)\to \Delta_N$ is bounded-above by
\begin{equation}
\label{eq:Lipschitz_projected_version_of_f1}
        L_{f^{(2)}}
    \le 
        L_{\Pi}
        \,
        \frac{2}{\varepsilon_1}
    \le 
        \tilde{c}
        \,
        \Big\lceil 
            \frac1{\alpha} 
        \Big\rceil
        \,
        N
        \,
        \frac{2}{\varepsilon_1}
    \eqdef 
        \tilde{L}_{f^{(2)}}
    .
\end{equation}
For $n=1,\dots,N$, let $y_n\eqdef P_{Q_{\varepsilon_3}}(f^{(1)}(x_n)) = \sum_{i=1}^{Q_{\varepsilon_3}}\,\langle f^{(1)}(u),s_i\rangle_{\calH_T^2}s_i$.  Define the Lipschitz map $\eta_{N}:\Delta_N\to \calH^2_T$ by
\begin{equation}
\label{eq:definition_eta}
    \eta_{N}:
    w\mapsto 
        \sum_{i=1}^{N}\,
        w_i\, y_n
.
\end{equation}
Note that $\eta_{N}:(\Delta_N,\|\cdot\|_2)\to (\calH_T^2,\|\cdot\|_{\calH^2_T})$ is $L^{\eta}_{\varepsilon_1,\varepsilon_2}$-Lipschitz with optimal Lipschitz constant, which we denote by $\operatorname{Lip}(\eta_{\epsilon_{N}})\ge 0$, bounded-above by the constant $L^{\eta}_{\varepsilon_1,\varepsilon_2}>0$ defined by
\allowdisplaybreaks
\begin{align}
        \big|
            \eta_{N}(w)
            -
            \eta_{N}(v)
        \big|
    \le &
        \sum_{i=1}^{N}
        \,
            \|y_i\|_{\calH_T^2}
            \,
            |w_i-v_i|
\\
\nonumber
    \le &
        (\operatorname{diam}(f(\mathcal{E}_d(\kkk_0)))+2\epsilon_2)
        \sum_{i=1}^{N}
        \,
            |w_i-v_i|
\nonumber
    \le 
        (L\operatorname{diam}(\kkk_0)
        ^{\alpha}
        +2\epsilon_2)
        \sum_{i=1}^{N}
        \,
            |w_i-v_i|
\\
    \le &
        (L\operatorname{diam}(\kkk_0)
        ^{\alpha}
        +2\epsilon_2)
        \sqrt{N}\,
        \|w-v\|_2
\label{eq:ub_OLC}
    \eqdef 
        L^{\eta}_{\varepsilon_1,\varepsilon_2}
        \|w-v\|_2
\end{align} 
where $w,v\in \Delta_{N}$ are arbitrary and we have used the inequality $\|\cdot\|_1\le \sqrt{N}\,\|\cdot\|_2$ (on $\R^{N}$).

\textit{Next, we show that $\eta_{N}\circ f^{(2)}$ approximates $F$ on $\mathcal{E}_d(\kkk_0)$.}

Since $\calH_T^2$ is a QAS space (see \citep[Definition 3.4]{acciaio2023designing} with $p=1$ and $C_{\eta}=1$, as shown in \citep[Example 5.1]{acciaio2023designing}) then Step $4$ of the proof of \citep[Theorem 3.8]{acciaio2023designing} holds unaltered in our setting (with $\mathcal{X}=\mathcal{E}_d(\kkk_0)$, $(\mathcal{Y},d_{\mathcal{Y}}=(\calH^2_T,\|\cdot\|_{\calH^2_T})$, the $\alpha$-H\"{o}lder target function with respect to the {$\alpha$}-H\"{o}lder seminorm $L>0$ of $f^{(1)}$).  

Set%
\footnote{In the notation of \citep[Proof of Theorem 3.8 - Step 3]{acciaio2023designing}, we have set $\bar{\varepsilon}_A\eqdef \varepsilon_Q\eqdef \varepsilon_D/3$.}~%
$\varepsilon_1\eqdef \frac{\varepsilon_D}{3\cdot 3L C_{\Pi}} = \frac{\varepsilon_D}{9L \tilde{c}d}
>0
$ and $\varepsilon_2\eqdef \frac{\varepsilon_D}{9}
>0$.  Arguing identically to the \citep[Proof of Theorem 3.8 - Step 3]{acciaio2023designing}, following \citep[Equation 51]{acciaio2023designing}, we conclude that
\begin{equation}
\label{eq:better_Estimate}
    \sup_{x\in \mathcal{E}_d(\kkk_0)}\,
        \|f^{(1)}(x)-\eta_{N}\circ f^{(2)}(x)\|_{\calH^2_T}
    \le
        \frac{\varepsilon_D}{3}
    .
\end{equation}
where, for us, our improved estimate on $C_{\Pi}$ was given in~\eqref{eq:Lip_const_rand_proj}; as is summarized in~\eqref{eq:better_Estimate}.

\noindent \textit{We now modify the map $f^{(2)}$ so that it takes values in the range of the softmax function; that is, in the relative interior of the $N$-simplex, i.e., in the set $\operatorname{int}(\Delta_N)\eqdef \{w\in (0,1)^N:\,\sum_{n=1}^N\,w_n=1\}$. 
We subsequently associate $f^{(2)}$ to a map taking values $\R^{N-1}$.  Finally, this latter map will be approximated by a neural network in step $3$.}

Fix $0<\varepsilon_3\le 1$, to be determined retroactively.  
Consider the $1$-Lipschitz homotopy $H:[0,1]\times \Delta_N\to \Delta_N$ given by $H(t,w)\eqdef t(w-\bar{\Delta}_N)+\bar{\Delta}_N$, where%
\footnote{I.e.\ {$\bar{\Delta}_N\eqdef (1/N,\dots,1/N)$} is the barycenter of the $N$-simplex $\Delta_N$.}~%
$\bar{\Delta}_N=(1/N,\dots,1/N)\in \Delta_N$ is the barycenter of the $N$-simplex.  Observe that, for each $t\in [0,1)$ we have $H(t,\Delta_N)\subset \operatorname{int}(\Delta_N)$ and for each $0\le \varepsilon_3\le \max_{w\in \Delta_N}\,\|w-\bar{\Delta}_N\|_2
{1-\frac{1}{N}}$
there exists a $t_{\varepsilon_3}\in [0,1)$ satisfying
\footnote{For the interested reader: we have just noted that the boundary of $\Delta_N$ is a $\mathcal{Z}$-set, in the sense of \citep[Section 5.1]{van2002infinite}.}
\begin{equation}
\label{eq:solving_for_teps}
        \max_{w\in \Delta_N}
    \,
        \|
            H(t_{\varepsilon_3},w)-w
        \|_2
    \le 
        \max_{w\in \Delta_N}
    \,
        \|
            H(t_{\varepsilon_3},w)-w
        \|_1
        \le \varepsilon_3
.
\end{equation}
The right-hand inequality can be solved explicitly for the largest value of $t_{\varepsilon_3}$ in $[0,1)$; this is because $w^{\star}\in \Delta_N$ given by $w^{\star}_1=1$ and $w^{\star}_j=0$ for $j=2,\dots,N$ is a non-unique maximizer of $\max_{w\in \Delta_N}
    \,
\|
    H(t_{\varepsilon_3},w)-w
\|_1$
.
We therefore compute
\allowdisplaybreaks
\begin{align}
\nonumber
        \|
                H(t_{\varepsilon_3},w)-w
        \|_1 
    = &
        \underbrace{
            |(t_{\varepsilon_3}(1-1/N)+1/N) - 1|
        }_{\text{$1^{rst}$ component}}
\\
    & +
        (N-1)
        \underbrace{
            |(t_{\varepsilon_3}(0-1/N)+1/N) - 0|
        }_{\text{$j>1^{rst}$ components}}
\\
\nonumber
    = &
            |1-t_{\varepsilon_3}|\,|(1-1/N)|
        +
            (N-1)
            |1-t_{\varepsilon_3}|\,|1/N|
\\
\nonumber
    = &
            |1-t_{\varepsilon_3}|\,(N-1)/N
        +
            |1-t_{\varepsilon_3}|\,(N-1)/N
\\
\label{eq:solve_for_teps2}
    = &
            (1-t_{\varepsilon_3})\,\frac{2(N-1)}{N}
    .
\end{align} 
If $\varepsilon_3>0$ is small enough,%
~\footnote{Namely, one needs that $0<\varepsilon_3 <\frac{2(N-1)}{N}$.}~%
then setting the right-hand side of~\eqref{eq:solve_for_teps2} equal to $\varepsilon_3$ and solving for $t_{\varepsilon_3}$ yields $t_{\varepsilon_3} 
    = 
        1-\frac{N\varepsilon_3}{2(N-1)}$.  For general values of $\varepsilon_3$, we may set
\begin{equation}
\label{eq:solve_for_teps2__solved}
        t_{\varepsilon_3} 
    = 
        1
        -
            \min\biggl\{
                \frac{N\varepsilon_3}{2(N-1)}
            ,
                1/2
            \biggr\}
.
\end{equation}

\noindent 
Consider the map
\begin{equation}
\label{eq:definition_rho_lambda}
\rho\eqdef \operatorname{softmax}_N\circ W:\R^{N-1}\to \operatorname{int}(\Delta_N)
\end{equation}
where $W:\R^{N-1}\ni x\to (x_1,\dots,x_{N-1},1)\in \R^N$.  A right-inverse of the smooth function $R:\operatorname{int}(\Delta_N)\to \R^{N-1}$ given for each $y\in \operatorname{int}(\Delta_N)$ by
\begin{align}
    \label{def:R}R(y)\eqdef 
    \big(
        (\ln(y_i)-\ln(y_N) + 1)
    \big)_{i=1}^N
.
\end{align}

Finally, we define the ``mollified simplicial target function'' $f^{(3)}\eqdef R\circ H(t_{\varepsilon_3},\cdot)\circ f^{(2)}:\R^d\to \R^{N-1}$.  

\noindent We therefore, have the following uniform estimate between $\rho\circ f^{(3)}$ and $f^{(2)}$
\allowdisplaybreaks
\begin{align}
\label{eq:estimate_f2f3__Begin}
    \max_{x\in \mathcal{E}_d(\kkk_0)}
    \,
        \|
                f^{(2)}(x)
            -
                \rho\circ f^{(3)}(x)
        \|_{\calH^2_T}
   = &
        \max_{x\in \mathcal{E}_d(\kkk)}
        \,
        \|
                f^{(2)}(x)
            -
                \rho\circ \big(R\circ H(t_{\varepsilon_3},\cdot)\circ f^{(2)}(x)  \big)
        \|_{\calH^2_T}
\\
\nonumber
    = & 
        \max_{x\in \mathcal{E}_d(\kkk)}
        \,
        \|
                f^{(2)}(x)
            -
                H(t_{\varepsilon_3},\cdot)\circ f^{(2)}(x)\big)
        \|_{\calH^2_T}
\\
\label{eq:estimate_f2f3__SimplexExtremalEstimate}
    = & 
        \max_{w\in\Delta_{N}}
        \,
            \|
                    w
                -
                    H(t_{\varepsilon_3},w)
            \|_{\calH^2_T}
\\
\label{eq:estimate_f2f3__End}
    \le & 
        \varepsilon_3
,
\end{align}
where we have use the fact that $f^{(2)}$ takes values in the $N$-simplex in deducing~\eqref{eq:estimate_f2f3__SimplexExtremalEstimate}, and~\eqref{eq:estimate_f2f3__End} held by~\eqref{eq:solving_for_teps}.  Combining our estimate in~\eqref{eq:better_Estimate} with those in~\eqref{eq:estimate_f2f3__Begin}-\eqref{eq:estimate_f2f3__End} yields
\allowdisplaybreaks
\begin{align}
\label{eq:better_Estimate__Mollified__Begin}
    \sup_{x\in \mathcal{E}_d(\kkk_0)}\,
        \|f^{(1)}(x)-\eta_{N}\circ \rho\circ f^{(3)}(x)\|_{\calH^2_T}
    \le & 
        \sup_{x\in \mathcal{E}_d(\kkk_0)}\,
            \|f^{(1)}(x)-\eta_{N}\circ f^{(2)}(x)\|_{\calH^2_T}
\\
\nonumber
    + &
        \sup_{x\in \mathcal{E}_d(\kkk_0)}\,
            \|\eta_{N}\circ f^{(2)}(x)-\eta_{N}\circ \rho\circ f^{(3)}(x)\|_{\calH^2_T}
\\
\nonumber
    \le & 
        \sup_{x\in \mathcal{E}_d(\kkk_0)}\,
            \|f^{(1)}(x)-\eta_{N}\circ f^{(2)}(x)\|_{\calH^2_T}
\\
\nonumber
    +&
        \operatorname{Lip}(\eta_{N})
        \,
        \sup_{x\in \mathcal{E}_d(\kkk_0)}\,
            \|f^{(2)}(x)-\rho\circ f^{(3)}(x)\|_{\calH^2_T}
\\
\nonumber
    \le & 
        \frac{\varepsilon_D}{3}
+
        \operatorname{Lip}(\eta_{N})
        \,
        \varepsilon_3
\\
\label{eq:better_Estimate__Mollified__end}
    \le & 
        \frac{\varepsilon_D}{3}
    +
        \big(
            (
            L
            \operatorname{diam}(\kkk_0) 
            ^{\alpha}
            +2\varepsilon_2)
            \,
            \sqrt{N}
        \big)
        \,
        \varepsilon_3
,
\end{align}
where $\operatorname{Lip}(\eta_{N})$ denotes the optimal Lipschitz constant of the map $\eta_{N}$ which we have bounded above by $L^{\eta}_{\varepsilon_1,\varepsilon_2}$, in~\eqref{eq:ub_OLC}. 
Combining the estimates in~\eqref{eq:packing_in_fK} and in~\eqref{eq:encoding_error} with those in~\eqref{eq:better_Estimate__Mollified__Begin}-\eqref{eq:better_Estimate__Mollified__end} yields
\allowdisplaybreaks
\begin{align}
\label{eq:final_estimate_BEGIN}
    \sup_{u\in \kkk_0}\,
        \|
            F(u)
            -
            \eta_{N}\circ \rho\circ f^{(3)}\circ \mathcal{E}_d(u)
        \|_{\calH^2_T}
& 
\le 
        \sup_{u\in \kkk_0}\,
            \|
                F(u)
                -
                F\circ\iota_d\circ \mathcal{E}_d(u)
            \|_{\calH^2_T}   
\\ 
\nonumber
&
    +
        \sup_{u\in \kkk_0}\,
            \|
                F\circ\iota_d\circ \mathcal{E}_d(u)
                -
                \eta_{N}\circ \rho\circ f^{(3)}\circ \mathcal{E}_d(u)
            \|_{\calH^2_T}  
\\
\nonumber
& 
\le 
        L
        \sup_{u\in \kkk_0}
        \,
            \|
                u
                -
                \iota_d\circ \mathcal{E}_d(u)
            \|_{\calH^2_T}   
\\ 
\nonumber
&
    +
        \sup_{u\in \kkk_0}\,
            \|
                F\circ\iota_d\circ \mathcal{E}_d(u)
                -
                \eta_{N}\circ \rho\circ f^{(3)}\circ \mathcal{E}_d(u)
            \|_{\calH^2_T}  
\\
\nonumber
& 
=
        L
        \sup_{u\in \kkk_0}
        \,
            \|
                u
                -
                \iota_d\circ \mathcal{E}_d(u)
            \|_{\calH^2_T}   
\\ 
\nonumber
&
    +
        \sup_{u\in \mathcal{E}_d(\kkk_0)}\,
            \|
                f^{(1)}(x)
                -
                \eta_{N}\circ \rho\circ f^{(3)}(x)
            \|_{\calH^2_T}  
\\
\nonumber
& 
\le
        L
        \varepsilon_0
    +
        \sup_{u\in \mathcal{E}_d(\kkk_0)}\,
            \|
                f^{(1)}(x)
                -
                \eta_{N}\circ \rho\circ f^{(3)}(x)
            \|_{\calH^2_T}  
\\
\label{eq:final_estimate_END}
& 
\le
        L
        \varepsilon_0 
        +
        \frac{\varepsilon_D}{3}
    +
        \big(
            (L\operatorname{diam}(\kkk_0)
            ^{\alpha}
            +2\varepsilon_2)
            \,
            \sqrt{N}
        \big)
        \,
        \varepsilon_3
.
\end{align}
Retroactively setting
\begin{equation}
\label{eq:values_eps0_eps3}
        \varepsilon_0 
    \eqdef 
        \frac{\varepsilon_D}{3L}
\mbox{ and }
        \varepsilon_3
    \eqdef 
        \frac{
            \varepsilon_D            
        }{
            \min\big\{1,
                3
                (
                L
                \operatorname{diam}(\kkk_0)
                ^{{\alpha}}
                )\sqrt{N}
            \big\}
        }
,
\end{equation}
implying that $\varepsilon_0\in \mathcal{O}(\varepsilon_D)$ and $\varepsilon_3\in \mathcal{O}(\varepsilon_D/\sqrt{N})$.  Consequentially, 
\begin{align}
\label{eq:final_estimate__dimensionreduced}
    \sup_{u\in \kkk_0}\,
        \|
            F(u)
            -
            \eta_{N}\circ \rho\circ f^{(3)}\circ \mathcal{E}_d(u)
        \|_{\calH^2_T}
\le&
    \,
    \varepsilon_D
.
\end{align}

Next, we will obtain upper-bound the best 
{$\alpha$-H\"{o}lder}
constant of $f^{(3)}$ to obtain quantitative parameter estimates on our MLP, which will approximate $f^{(3)}$.

\noindent\textbf{Step 4 - Computing the Regularity of the Surrogate Target Function}
To, apply a \textit{quantitative} universal approximation theorem, we need a handle on the regularity of the target function being approximated.  
We first observe that, on the set $\Delta_N^{\epsilon_2}\eqdef H(t_{\epsilon_2},\Delta_N)$ the map $R$, defined in \eqref{def:R}, is $L_{f^{(3)}\epsilon_2,N}$-Lipschitz with constant given by 
\allowdisplaybreaks
\begin{align}
\label{eq:Lip_constant_Rinv__begin}
        L_{f^{(3)}\varepsilon_2,N}
    = 
        \sup_{w\in \Delta_N^{\varepsilon_3}}
        \,
            \|\nabla R(w)\|_2
    \le 
&
        \sup_{w\in \Delta_N^{\varepsilon_3}}
        \,
            \|\nabla R(w)\|_1
\\
\label{eq:TB_Improved_at_end}
    = &
        \sup_{w\in \Delta_N}
        \sum_{i=1}^N\,
        \frac1{
            |t_{\varepsilon_3}(w_i-1/N) + 1/N|
        }
\\
\nonumber
    \le & \sum_{i=1}^N \, \frac1{
        \min_{w\in \Delta_N}\, \lambda | t_{\varepsilon_3}(w_i-1/N) + 1/N|
    }
\\
\nonumber
    = & 
        \frac{N^2}{
        (1-t_{\varepsilon_3})
        }
\\
\nonumber
    = & 
            N
        \,
        \max\big\{
                2N
            ,
                \frac{
                    2(N-1)
                }{
                    \varepsilon_3
                }
        \big\}
\\
\label{eq:Lip_constant_Rinv}
    \le & 
        \frac{
            2
            \,
            N^2
        }{
            \min\{\varepsilon_3^{-1},1/2\}
        }
    \eqdef 
        \tilde{L}_{f^{(3)}\varepsilon_3,N}
\end{align}
{where $\nabla R$ denotes the Jacobian of $R$ and}
where the inequality~\eqref{eq:Lip_constant_Rinv__begin} holds by the Rademacher-Stephanov theorem, see e.g.~\citep[Theorems 3.1.6-3.1.9]{federer2014geometric}.  

We thus conclude that, from the estimates in~\eqref{eq:Lipschitz_projected_version_of_f1}, \eqref{eq:Lip_constant_Rinv__begin}-\eqref{eq:Lip_constant_Rinv}, and the observation that $H$ is $1$-Lipschitz, that $f^{(3)}$ is $L_{f^{(3)}}$-Lipschitz; where
\begin{equation}
\label{eq:Lip_const_target_function}
\begin{aligned}
        L_{f^{(3)}}
    \le 
        L_{f^{(2)}}
        \,
        L_{f^{(3)}\epsilon_2,N}
    & = 
        \frac{\bar{c}N^3}{
        \varepsilon_1
        \,
        \min\{\min\{1,\epsilon_3\},1/2\}
        }
    \\
    & = 
        \frac{9\bar{c}d
            \,
            N^2
        }{
            \varepsilon_D
        \,
        \min\{\varepsilon_D/3,1/2\}
        }
    \eqdef 
        \tilde{L}
,
\end{aligned}
\end{equation}
where $\bar{c}\eqdef 4\tilde{c}>0$. We now construct our deep-learning approximation.  

\textbf{Step 5 - Neural Approximation of Surrogate Target Function $\hat{f}^{(3)}$:}
\hfill\\
We consider two cases; in the former, the activation function is smooth and in the latter, it is the trainable super-expressive activation (defined in~\eqref{eq:supreexpressive}).
\begin{enumerate}
    \item \textbf{Case 1 - $\sigma$ as in Example~\ref{ex:Activation_Standard}:} 
    By~\citep[Proposition 53]{AKLeonie_2022_JMLR}, there is a MLP $\hat{f}:\R^d\to\R^N$ with activation function 
$\sigma_0 \in C(\mathbb{R})$ (in the notation of Example~\ref{ex:Activation_Standard})
satisfying
\begin{equation}
\label{eq:ReLU}
    \sup_{x\in \mathcal{E}_d(\kkk_0)}\,
        \|
                \hat{f}(x)
            -
                f^{(3)}(x)
        \|_2
    <
        \bar{\varepsilon}_A
\end{equation}
with depth and width given by:
\begin{itemize}
    \item \textbf{Width:}
        $d+N+2$
    \item \textbf{Depth:} Finite, and if $\sigma$ is non-affine and smooth then: 
            $
            \mathcal{O}\Big(
                N ((1-d/4)N)^{2d/\alpha}
                \,
                (2C)^{2d}
                \,
                \varepsilon^{-2d/\alpha}
            \Big)
            $
\end{itemize}
where we use the fact that the diameter of $\kkk_0$ is at-most $2C$.

\item If $\sigma$ is the trainable super-expressive activation function in~\eqref{eq:supreexpressive}, then: for $i=1,\dots,N$ \citep[Theorem 1]{gao2022achieving} there exists an MLP $\hat{f}_i:\R^d\to\R$ with activation function $\sigma_0$ satisfying
\begin{equation}
\label{eq:superexpressiveapprox}
    \max_{i=1,\dots,d}\,\sup_{x\in \mathcal{E}_d(\kkk_0)}
    \,\|
            \langle f^{(3)}(x),e_i\rangle 
        - 
            \hat{f}_i(x)
    \|_2
    <
        \bar{\varepsilon}_A/N
\end{equation}
where $\{e_i\}_{i=1}^N$ is the standard orthonormal basis of $\R^N$.   
Moreover, by \citep[Theorem 1]{gao2022achieving}, and the remark directly after, the width, depth, and number of non-zero parameters determining each network is exactly
\begin{itemize}
    \item \textbf{Width}: $11$,
    \item \textbf{Depth:} $36\,(2d+1)$,
    \item \textbf{No.\ Params:} $5437\,(d+1)\,(2d+1)$.
\end{itemize}

Since $\sigma_1(x)=x$, for all $x\in \R$, then the trainable activation function $\sigma$ has the $1$-identity requirement (see~\citep[Definition 4]{FlorianHighDimensional2021}) applies \citep[Proposition 5]{FlorianHighDimensional2021} from which we conclude that there exists an MLP $\hat{f}:\R^d\to\R^N$ with activation function $\sigma$ satisfying: for each $x\in \R^d$
\[
    \hat{f}(x)
    =
    \sum_{i=1}^N\,
        f_i(x)\,e_i
,
\]
furthermore, the with, depth, and number of non-zero determining $\hat{f}$ are
\begin{itemize}
    \item \textbf{Width}: $12\,N \in \mathcal{O}(N)$,
    \item \textbf{Depth:} $d(N-1) + 36\,(2d+1) \in 
    \mathcal{O}\big(
            dN
    \big)
    $,
    \item \textbf{No.\ Params:} at-most $
        3738
        \,N^2(d^2-1)
        N
        (d+1)\,(2d+1)
        \in \mathcal{O}\big(
        N^3\,d^4
        \big)
    $.
\end{itemize}
Consequentially,~\eqref{eq:superexpressiveapprox} implies that 
\begin{equation}
\label{eq:superexpressiveapprox_done}
\begin{aligned}
        \sup_{x\in \mathcal{E}_d(\kkk_0)}
        \,\|
                f^{(3)}(x)
            - 
                \hat{f}(x)
        \|_2
    \le 
    \sum_{i=1}^N\,
    \sup_{x\in \mathcal{E}_d(\kkk_0)}
        \,\|
                \langle f^{(3)}(x),e_i\rangle 
            - 
                \hat{f}_i(x)
        \|_2
        < 
            N
            \frac{\bar{\varepsilon}_A}{N}
        = 
        \bar{\varepsilon}_A
\end{aligned}
\end{equation}
\end{enumerate}
{where $e_i$ is the $i^{th}$ standard basis vector in $\mathbb{R}^N$ with $1$ in the $i^{th}$ coordinate and $0$ otherwise.}
We are now ready to complete the proof by combining the estimates from the previous steps.

\textbf{Step 6 - Putting it All Together:}\hfill\\
Set $\hat{F}\eqdef \eta_{N_{\varepsilon_1}}\circ \rho\circ \hat{f}\circ \mathcal{E}_d:\mathcal{H}_T^2\to \mathcal{H}_T^2$.  The estimates in~\eqref{eq:final_estimate__dimensionreduced} with those in~\eqref{eq:ReLU} (resp.~\eqref{eq:superexpressiveapprox_done}) yield
\allowdisplaybreaks
\begin{align}
\nonumber
    \sup_{u\in \kkk_0}\,
        \|
            F(u)
            -
            \hat{F}(u)
        \|_{\calH^2_T}
\le &
    \sup_{u\in \kkk_0}\,
        \|
            F(u)
            -
            \eta_{N_{\varepsilon_1}}\circ \rho\circ f^{(3)}\circ \mathcal{E}_d(u)
        \|_{\calH^2_T}
\\ 
\nonumber
& +
    \sup_{u\in \kkk_0}\,
        \|
            \eta_{N_{\varepsilon_1}}\circ \rho\circ f^{(3)}\circ \mathcal{E}_d(u)
            -
            \hat{F}(u)
        \|_{\calH^2_T}
\\
\nonumber
\le &
    \varepsilon_D
+
    \sup_{u\in \kkk_0}\,
        \|
            \eta_{N_{\varepsilon_1}}\circ \rho\circ f^{(3)}\circ \mathcal{E}_d(u)
            -
            \hat{F}(u)
        \|_{\calH^2_T}
\\
\nonumber
= &
    \varepsilon_D
+
    \sup_{x\in \mathcal{E}_d(\kkk_0)}\,
        \|
            \eta_{N_{\varepsilon_1}}\circ \rho_{1}\circ \hat{f}(x)
            -
            \eta_{N_{\varepsilon_1}}\circ \rho_{1}\circ \hat{f}(x)
        \|_{\calH^2_T}
\\
\nonumber
= &
    \varepsilon_D
+
    \operatorname{Lip}(\eta_{N_{\varepsilon_1}}\circ \rho)
    \,
    \sup_{x\in \mathcal{E}_d(\kkk_0)}\,
        \|
            f^{(3)}(x)
            -
            \hat{f}(x)
        \|_{2}
\\
\label{eq:final_bound}
= &
    \varepsilon_D
+
    \operatorname{Lip}(\eta_{N_{\varepsilon_1}}\circ \rho)
    \,
    \bar{\varepsilon}_A
.
\end{align}
Since $W$ is an isometric embedding and the $\operatorname{softmax}$ function is at-most $1$-Lipschitz then $\rho$ is at-most $1$-Lipschitz and $\operatorname{Lip}(\eta_{N_{\varepsilon_1}}\circ \rho)=\operatorname{Lip}(\eta_{N_{\varepsilon_1}})$, which by~\eqref{eq:ub_OLC} is at-most
\[
        \operatorname{Lip}(\eta_{N_{\varepsilon_1}})
    \le 
        (L
        \operatorname{diam}(\kkk_0) 
        {^{\alpha}}
        +2\epsilon_D/9)
        \sqrt{N}
.
\]
Since this upper-bound on $\operatorname{Lip}(\eta_{N_{\varepsilon_1}}\circ \rho))$ depends only on $\varepsilon_D$ and is independent of $\bar{\varepsilon}_A$.  Consequentially for the right-hand side of~\eqref{eq:final_bound} can be made arbitrarily small by choosing $\varepsilon_D$ and $\bar{\varepsilon}_A$ large enough.

It remains to bound $N$ explicitly.  
There are three cases which we consider here, each of which corresponds to the respective assumptions made on $\kkk_0$ and its relationship to the target (non-linear) operator $f$:
\begin{enumerate}
    \item \textbf{Exponentially Ellipsoidal:} 
    Suppose that $\kkk_0\subset\mathcal{H}_T^2$ is such that: for each $\kkk_0\ni x=\sum_{i=1}^{\infty}\,\beta_i s_i$ we have that $|\beta_i|\le C\, r^i$.  
    Recall that, e.g.\ as noted on~\citep[in Remark 1]{DumerPinskerPrelov_CoveringEucSpace_2004_IEEEInfTrans}, that the $\epsilon_A$-covering number of $p_d(\varepsilon_A^{-1}\cdot \kkk_0)$ (where $\delta \kkk_0$ denotes the $\delta$-thickening of $\kkk_0$ in $\mathbb{R}^d$).  Therefore, for each such $x\in \kkk_0$ we have that
    \[
            \sum_{i=1}^d\, 
                \frac{|\beta_i|^2}{\theta_i^2} 
        \le 
            1
    \]
    where the scaling constants $(\theta_i)_{i=1}^{\infty}$ (independent of $d$) are given by
    \begin{equation}
    \label{eq:covering_lemma}
            \theta_i 
        =
            \biggl(
                r
                \,
                \Big(
                    1 + 
                    \Big(
                        \frac{C}{\varepsilon_A}
                    \Big)^{2}
                \Big)^{1/2}
            \biggr)^i
    .
    \end{equation}
    Therefore, \citep[Theorem 2]{DumerPinskerPrelov_CoveringEucSpace_2004_IEEEInfTrans}, with the description of $o(1)$ given in its proof on~\citep[Equation (40)]{DuchiHazanSinger_2011_JMLR_adaptiveSGOnlineOptim}, by~\eqref{eq:covering_lemma} we find that
    \begin{equation}
    \label{eq:covering_lemma__ellipse}
    \begin{aligned}
            N 
        &\le 
            \exp\big(
                \sum_{i=1}^d\,
                    i\, \log\big(
                        Cr
                        \,
                        (
                            C^{-2} + \varepsilon_A^{-2}
                        )^{1/2}
                    \big)
            \big)
\\
    & =
        \exp\big(
                \frac{d(d+1)}{2}
                \, 
                \log\big(
                    Cr
                    \,
                    (
                        C^{-2} + \varepsilon_A^{-2}
                    )^{1/2}
                \big)
        \big)
\\
    & = 
                    \big(
                        r
                        \,
                        (
                            1 + 
                            (C/\varepsilon_A)^{2}
                        )^{1/2}
                    \big)^{\frac{d(d+1)}{2}}
    \end{aligned}
    \end{equation}
    Since, in this case, $d\in \mathcal{O}(\ln(\epsilon_D^{-1/r}))$ then there exists some $C_1>0$ such that~\eqref{eq:covering_lemma__ellipse} reduces to
    \begin{equation}
    \label{eq:covering_lemma__ellipse___completed}
    \begin{aligned}
            N 
        &\le
            \big(
                r
                \,
                (
                    1 + 
                    (C/\varepsilon_A)^{2}
                )^{1/2}
            \big)^{
            C_1
            \ln(\epsilon_D^{-1/r})^2
            }
    \end{aligned}
    \end{equation}
    \item \textbf{Exponential Manifold:} Suppose that $\kkk_0$ satisfies Definition~\ref{defn:exp_manifold}. For every $\tilde{\varepsilon_A}>0$ (to be fixed momentarily), \cite[Proposition 15.1.3]{lorentz1996constructive}, implies that $\tilde{\varepsilon_A}$-covering number $\tilde{N}$ of the Euclidean unit ball $B_{d}\eqdef\{x\in \mathbb{R}^d:\, \|x\|\le 1\}$ is bounded above and below by
        \begin{equation}
        \label{eq:tight_covering_bounds}
                2^{-d}\,\big(\sqrt{d}/\tilde{\varepsilon_A})^d
            \leq 
                \tilde{N}
            \leq
                3^d
                \,\big(\sqrt{d}/\tilde{\varepsilon_A} \big)^d
        .
        \end{equation}
    Since the ``latent parameterization'' map $\pi:\mathbb{R}^d\to \mathcal{H}_T^2$ was assumed to be $1$-Lipschitz and maps onto $\kkk_0$ then, the image of every $\tilde{\varepsilon_A}$ of $B_d$ under $\pi$ must be $1\cdot \tilde{\varepsilon_A}$ covering of $\kkk_0$. Set $\tilde{\varepsilon_A}=\varepsilon_A$.  Then,~\eqref{eq:tight_covering_bounds} implies that
    \[
            N
        \le 
            \big(3\sqrt{d}/\tilde{\varepsilon_A} \big)^d
        =
            \big(\varepsilon_A^{-1}\,3(c(\ln(\varepsilon_D^{-1/r}))^{1/2} \big)^{c(\ln(\varepsilon_D^{-1/r})}
    \]
    where we have used the fact that $\kkk_0$ is contained in an $(r,f)$-exponentially ellipsoidal set to deduce that $d\le c\ln(\varepsilon_D^{-1/r})$ for some absolute constant $c>0$.
    \item \textbf{General Case:} In the case of general $\kkk_0$, \citep[Lemma 7.1]{acciaio2023designing} and the upper-bound of $2^{d+1}$ on the doubling constant of $\mathcal{E}_d(\kkk_0)$, just prior to Equation~\eqref{eq:Lipschitz_projected_version_of_f1}, implies that
    \[
    N\le \big(2^{(d+1)}\big)^{\log_2(\operatorname{diam}(\kkk_0)) - 
    \frac1{\alpha}\,
    \log_2(\varepsilon_D/\tilde{L}) 
    + 
    \frac1{\alpha}\,
    \log_2(\tilde{c}d)}
    \]
    for some absolute constant $\tilde{c}>0$.  
\end{enumerate}
Setting $
\varepsilon_A
\eqdef 
\bar{\varepsilon}_A
/\big(
    c
        (L\operatorname{diam}(\kkk_0)
        {^{\alpha}}
        +\varepsilon_D)
        \sqrt{N
        }
    \big)
\in 
\mathcal{O}\Big(
    \frac{\bar{\varepsilon}_A}{\epsilon_D \sqrt{N
    }}
\Big)
$ yields the conclusion.

\textbf{Step 7 - Elucidating the Model}

Let $V\in \mathbb{R}^{N\times Q}$ be such that, for $n=1,\dots,N$ and $i=1,\dots,Q$, $V_{n,q}\eqdef \langle F\circ \iota_d(x_n),s_i \rangle_{\mathcal{H}_T^2}$.  Then,~\eqref{eq:definition_eta} implies that: for each $w\in \Delta_N$
\begin{equation}
\label{eq:elucidating_eta__1}
\eta(w) = \sum_{n=1}^N\, \big(
\sum_{i=1}^Q\,
\langle F\circ \iota_d(x_n),s_i \rangle_{\mathcal{H}_T^2}
\big)
= 
\sum_{n=1}^N
\sum_{i=1}^Q\,
\, w_n \,V_{n,q}
\end{equation}
For either $\sigma$ is smooth~
or $\sigma$ as in~\eqref{eq:superexpressiveapprox}, consider the MLP with $\sigma$ activation function $\mathcal{V}:\mathbb{R}^d\to \mathbb{R}^{N\times Q}$ given for each $x\in \mathbb{R}^d$ by
\[
        \mathcal{V}(x)
    \eqdef 
        \mathbf{0}^{(1)}
        \sigma\bullet
        \big(
            \mathbf{0}^{(2)}
            x
            +
            \mathbf{0}^{(3)}
        \big)
        +
        V
\]
where $
\mathbf{0}^{(1)}
$ is the $ND\times 1$ zero matrix, $\mathbf{0}^{(2)}$ is the $1\times d$ zero matrix, and $\mathbf{0}^{(3)}=(0)\in \mathbb{R}$, and where we have identified $\mathbb{R}^{N\times D}$ with $\mathbb{R}^{ND}$.  
In either case, observe that the number of non-zero parameters defining $\mathcal{V}$ are at-most $ND$.

For each $n=1,\dots,N$ and $i=1,\dots,Q$, we $V^{(n,i)}\eqdef s_i$.
By construction: for each $u\in \mathcal{H}_T^2$ and every $w\in \Delta_N$ we have that
\begin{equation}
\label{eq:elucidating_eta__2}
\mathcal{D}(w,u)\eqdef 
\sum_{n=1}^N\, w_n \,[\mathcal{V}\circ \mathcal{E}_d(u)]_n\, V^{(n,q)}
=
\eta_N(w)
.
\end{equation}
Since the map $W$ in the definition of $\rho$, see the line just below~\eqref{eq:definition_rho_lambda}, was affine then our approximation $\hat{F}= \eta_{N_{\varepsilon_1}}\circ \rho\circ \hat{f}\circ \mathcal{E}_d:\mathcal{H}_T^2\to \mathcal{H}_T^2$ is of the form in Definition~\ref{defn:NO}.

The number of non-zero parameters defining the model $\hat{F}$ are, therefore, at-most
\begin{equation}
\label{eq:N_PAR}
        \underbrace{
            \operatorname{Depth}(\hat{f})
            \,
            \operatorname{Width}(\hat{f})^2
        }_{\text{No. Param. $\hat{f}$}}
    +
        \underbrace{
            NQ
        }_{\text{No. Param. $\mathcal{V}$}}
\end{equation}
where the depth and width of $\hat{f}$ were computed in step $4$.  
In particular, if $\sigma$ is the trainable super-expressive activation function in~\eqref{eq:superexpressiveapprox} then the quantity in~\eqref{eq:N_PAR} is $\mathcal{O}\big(
    N(N^2d^4 + Q)
    \big)
$.
\end{proof}

\begin{proof}[{Proof of Theorem~\ref{thrm:UniversalApprox}}]
Fix a non-empty compact subset $\kkk_0 \subset \mathcal{H}_T^2$, a continuous function $f: \kkk_0 \to \mathcal{H}_T^2$, and a $\varepsilon>0$.

\textit{Let $\kkk_0\subset \mathcal{H}_T^2$ be non-empty and compact.  We would like to use~\citep[Theorem 1]{Miculescu_RAE_2002__ApproxLipFunctions} to reduce the problem of approximating $f$ to $\varepsilon$ precision, to the problem of approximating an $\varepsilon/2$ Lipschitz approximation of our target function $f$ to $\varepsilon/2$ precision.  Thus, we will be able to employ our technical approximation theorem for Lipschitz maps between $\mathcal{H}_T^2$ to itself, to deduce our conclusion.
On a technical note, we do not argue on the domain $\mathcal{H}_T^2$ but rather on the compact subspace $\kkk_0$, since all continuous functions are both bounded and uniformly continuous thereon; which then directly allows us to~\citep[Theorem 1]{Miculescu_RAE_2002__ApproxLipFunctions} which only allows us to uniformly approximate bounded continuous functions on compacta.}

\noindent \textbf{Step 1 - Verification of Lipschitz Extension Property}
\hfill\\
In order to apply~\citep[Theorem 1]{Miculescu_RAE_2002__ApproxLipFunctions} we will need to show that the pair $(\kkk_0,\|\cdot\|_{\mathcal{H}_T^2})$ and $\mathcal{H}_T^2$ have the so-called ``Lipschitz extension property'' (as named in~\citep[Theorem 1]{Miculescu_RAE_2002__ApproxLipFunctions}).  
This means that the Lipschitz function from $(B,\|\cdot\|_{\mathcal{H}_T^2})$ to $\mathcal{H}_T^2$, for any subset $B$ of $\kkk_0$, can be extended to a Lipschitz function of all of $\kkk_0$ with roughly the same Lipschitz constant.

Let $B\subseteq \kkk_0$.  Note that, as $\kkk_0 \subset \mathcal{H}_T^2$ then, $B$ is a subset of $\mathcal{H}_T^2$.  
Since $\mathcal{H}_T^2$ is a separable Hilbert space then the extension theorem of~\cite[Theorem 1.12]{BenyaminiLindenstrauss_2000_NonlinearFunctionalAnalysis}: for every $L\ge 0$ and each $L$-Lipschitz (non-linear operator) $g:(B,\|\cdot\|_{\mathcal{H}_T^2})\to \mathcal{H}_T^2$ there exists an $L$-Lipschitz extension $\tilde{G}:\mathcal{H}_T^2\to\mathcal{H}_T^2$; i.e.\ $\tilde{G}$ is $L$-Lipschitz
\begin{equation}
\label{eq:intermediateLipschitzextension_V1}
        \tilde{G}|_B
    =
        g
.
\end{equation}
Since the restriction operator $\iota_{\kkk_0}:\mathcal{H}_T^2\ni \tilde{g}\to \tilde{g}|_K\in \kkk_0$ is $1$-Lipschitz then the composite map $G\eqdef \iota_{\kkk_0} \circ \tilde{G}=\tilde{G}|_{\kkk_0} :( \kkk_0,\|\cdot\|_{\mathcal{H}_T^2})\to \mathcal{H}_T^2$ is $L$-Lipschitz.  Furthermore,~\eqref{eq:intermediateLipschitzextension_V1} and the inclusion of $B$ in $\kkk_0$ imply that
\begin{equation}
\label{eq:intermediateLipschitzextension}
        G|_B
    =
        (\tilde{G}|_B)|_{\kkk_0}
    =
        g
.
\end{equation}
Thus, $G$ is an $L$-Lipschitz extension of $g$ to $(\kkk_0,\|\cdot\|_{\mathcal{H}_T^2})$.  Thus, the pair $(\kkk_0,\|\cdot\|_{\mathcal{H}_T^2})$ and $\mathcal{H}_T^2$ has the Lipschitz extension property; thus~\citep[Theorem 1]{Miculescu_RAE_2002__ApproxLipFunctions} implies that the space of Lipschitz functions from $(\kkk_0,\|\cdot\|_{\mathcal{H}_T^2})$ to $\mathcal{H}_T^2$ is \textit{dense} in the space of uniformly continuous and bounded functions from $(\kkk_0,\|\cdot\|_{\mathcal{H}_T^2})$ to $\mathcal{H}_T^2$ with respect to the uniform norm.

\noindent \textbf{Step 2 - $\varepsilon/2$-Approximation of $f$ by Lipschitz Maps}
\hfill\\
Since $\kkk_0$ is compact and $f$ is continuous on $\kkk_0$ then $f$ is uniformly continuous and bounded thereon. By~\citep[Theorem 1]{Miculescu_RAE_2002__ApproxLipFunctions}, we deduce that there exists a Lipschitz function $\tilde{f}_{\varepsilon}:(\kkk_0,\|\cdot\|_{\mathcal{H}_T^2})\to \mathcal{H}_T^2$ satisfying
\begin{equation}
\label{eq:near_apprx_lip}
        \max_{u\in \kkk_0}\, 
            \|f(u)-\tilde{f}_{\varepsilon}(u)\|_{\mathcal{H}_T^2} 
    \le 
        \varepsilon/2
.
\end{equation}
Again applying~\cite[Theorem 1.12]{BenyaminiLindenstrauss_2000_NonlinearFunctionalAnalysis}, we deduce that $\tilde{f}_{\varepsilon}$ admits a Lipschitz extension $f_{\varepsilon}:\mathcal{H}_T^2\to \mathcal{H}_T^2$, with the same Lipschitz constant. Since $f_{\varepsilon}$ is a Lipschitz extension of $\tilde{f}_{\varepsilon}$, beyond $\kkk_0$, then~\eqref{eq:near_apprx_lip} implies that
\begin{equation}
\label{eq:near_apprx}
        \max_{u\in \kkk_0}\, 
            \|f(u)-f_{\varepsilon}(u)\|_{\mathcal{H}_T^2} 
    =
        \max_{u\in \kkk_0}\, 
            \|f(u)-\tilde{f}_{\varepsilon}(u)\|_{\mathcal{H}_T^2} 
    \le 
        \varepsilon/2
.
\end{equation}
\noindent \textbf{Step 3 - $\varepsilon/2$-Approximation of $f_{\varepsilon}$ by Attentional Neural Operator}
\hfill\\
Since $\kkk_0$ is a compact subset of $\mathcal{H}_T^2$ and $f_{\varepsilon}:\mathcal{H}_T^2\to \mathcal{H}_T^2$ is Lipschitz then Lemma~\ref{lem:UAT_1} applies.  Whence, there exists an attentional neural operator $\hat{F}:\calH_T^2\to\calH_T^2$ satisfying
\begin{equation}
\label{eq:near_approx_attentional}
    \max_{u\in \kkk_0}\,
        \|
            f_{\varepsilon}(u)
            -
            \hat{F}(u)
        \|_{\calH^2_T}
.
\end{equation}
Combining~\eqref{eq:near_apprx} and~\eqref{eq:near_approx_attentional} yield
\begin{equation*}
        \max_{u\in \kkk_0}\, 
            \|f(u)-\hat{F}(u)\|_{\mathcal{H}_T^2} 
    \le
            \max_{u\in \kkk_0}\, 
                \|f(u)-f_{\varepsilon}(u)\|_{\mathcal{H}_T^2} 
        +
            \max_{u\in \kkk_0}\,
                \|
                    f_{\varepsilon}(u)
                    -
                    \hat{F}(u)
                \|_{\calH^2_T}
    \le 
        \varepsilon/2 + \varepsilon/2 = \varepsilon , 
\end{equation*}
which concludes our proof.
\end{proof}

\subsection{Proof of \texorpdfstring{Main Stackelberg Equilibria Results}{Theorems~\ref{thrm:Main__BestResponse} and~\ref{thrm:Objective}}}
\label{s:Proofs_MainStackelbergApproximations}

Theorems~\ref{thrm:Main__BestResponse} and~\ref{thrm:Objective} are two parts of a larger whole.  
As such, their derivation is most naturally merged into a single proof; we now do.

\begin{proof}[{Joint Proof of Theorems~\ref{thrm:Main__BestResponse},~\ref{thrm:Objective}, and~\ref{thrm:Main__BestResponse___goodrates}}]
Let $\hat{U}:\mathcal{H}_T^2\to \mathcal{H}_T^2$ be a map, to be fixed retroactively.
\hfill\\
For any $d\in \mathbb{N}_+$, let $p_d:\mathcal{H}_T^2\to \operatorname{span}\{s_i\}_{i=1}^d$ be the orthogonal projection; i.e.\ $p_d\big(
\sum_{i=1}^{\infty}\,\beta_is_i
\big)= \sum_{i=1}^{d}\,\beta_is_i$ for all $u=\sum_{i=1}^{\infty}\,\beta_i\,s_i\in \mathcal{H}_T^2$.  
We will retroactively adjust $d$.
Fix $u^0\in \kkk_0$, denote $\hat{u}^0_d\eqdef p_d(u^0)$ and compute
\allowdisplaybreaks
\begin{align} 
\label{eq:tb_controlled}
        \big|
                J_{0}(u^0,U^{\star}(u^0))
            -
                J_{0}\big(
                    \hat{u}^0_d
                ,
                    \hat{U}(
                    \hat{u}^0_d
                    )
                \big)
        \big|
    \le &
        \underbrace{
            \big|
                    J_{0}(u^0,U^{\star}(u^0))
                -
                    J_{0}\big(
                        \hat{u}^0_d
                    ,
                        U^{\star}(
                        \hat{u}^0_d
                        )
                    \big)
            \big|
        }_{\term{t:input_truncation}}
\\
\nonumber
    +
    &
    \underbrace{
        \big|
                J_{0}\big(
                    \hat{u}^0_d
                ,
                    U^{\star}(
                    \hat{u}^0_d
                    )
                \big)
            -
                J_{0}\big(
                    \hat{u}^0_d
                ,
                    \hat{U}(
                    \hat{u}^0_d
                    )
                \big)
        \big|
    }_{\term{t:operator_approximation}}
.
\end{align} 
\paragraph{{Step 1 - Bounding Term~\Cref{t:input_truncation}}}
By Lemma~\ref{lm:stabilityJ0}
, we can bound~\Cref{t:input_truncation} from above by
\begin{align}
\label{eq:input_truncation__bound_1}
        \eqref{t:input_truncation}
    = &
        \big|
                J_{0}(u^0,U^{\star}(u^0))
            -
                J_{0}\big(
                    \hat{u}^0_d
                ,
                    U^{\star}(
                    \hat{u}^0_d
                    )
                \big)
        \big|
    \le 
      \widetilde{\omega}\big(
            \|u^0-\hat{u}^0_d\|_{\mathcal{H}_T^2}
      \big)
\end{align}
where $\widetilde{\omega}(t)=C\max\{|t|,|t|^{1/2}\}$ for each $t\in \mathbb{R}$ and for some constant $C\ge 0$ depending only on $T$.
Note that, $t\mapsto \widetilde{\omega}\big(t\big)$ is continuous, monotonically increasing on $[0,\infty)$ and subjective, thus it is a homomorphism of $[0,\infty)$ to itself with continuous inverse given by
\[
    \bar{\omega}(t)\eqdef 
    \begin{cases}
        (t/C)^2 &\mbox{ if } 0\le t\le 1 \\
        (t/C) &\mbox{ if } 1\le t
.
    \end{cases}
\]
We emphasize that, $\bar{\omega}$ has range $[0,\infty)$.

Since $\mathcal{H}_T^2$ has the $1$-bounded approximation property implemented by the (finite-rank) projection operators $(p_d)_{d\in \mathbb{N}_+}$ then choosing $d$ large enough, we may ensure that $\sup_{u^0\in \kkk_0}\,  \|u^0-\hat{u}^0_d\|_{\mathcal{H}_T^2}<
\bar{\omega}(\varepsilon/2)$.  
The right-hand side of~\eqref{eq:input_truncation__bound_1} can be bounded-above as follows
\begin{align}
\label{eq:input_truncation__bound_2}
        \eqref{t:input_truncation}
    \le 
    \widetilde{\omega}\big(
      \|u^0-\hat{u}^0_d\|_{\mathcal{H}_T^2}
    \big)
    \le 
    {
            \widetilde{\omega}\biggl(
              \sup_{u^0\in \kkk_0}\,  \|u^0-\hat{u}^0_d\|_{\mathcal{H}_T^2}
            \biggr)
        \le 
            \widetilde{\omega}\Big(
                \bar{\omega}\Big(
                    \frac{\varepsilon}{2}
                \Big)
            \Big)
        = 
            \frac{\varepsilon}{2} 
    }
.
\end{align}
It remains to bound~\Cref{t:operator_approximation}.

\paragraph{{Step 2 - Bounding Term~\Cref{t:operator_approximation}}}
By Lemma~\ref{lm:|Ji(u0u1)-Ji(tu0tu1)|<=|u0-tu0|+|u1-tu1|}, we find that
\begin{align} 
    \eqref{t:operator_approximation}
= 
    \big| 
        J_{0}
        (\hat{u}^0_d, U^\star(\hat{u}^0_d))
    -
        J_{0}
        (\hat{u}^0_d, \hat{U}(\hat{u}^0_d)) 
    \big| 
\leq 
    C \cdot 
     \big\| U^\star(\hat{u}^0_d) - \hat{U}(\hat{u}^0_d) \big\|_{\calH^2_T}
    \label{eq:operator_approximation__bound_1b} 
.
\end{align} 
By our universal approximation theorem, in Lemma~\ref{lem:UAT_1}, we have that there exists an attentional neural operator $\hat{U}:\mathcal{H}_T^2\to\mathcal{H}_T^2$, as in Definition~\ref{defn:NO}
\begin{align} 
\label{eq:operator_approximation__bound_2}
 \sup_{v\in p_d(\kkk_0)}\,
    \big\| U^\star(v) - \hat{U}(v) \big\|_{\calH^2_T}
\le
    \frac{\varepsilon}{2C}
\end{align} 
where we have set $
\varepsilon_D \eqdef \varepsilon_A \eqdef \varepsilon/4C$.
Thus, the estimate in~\eqref{eq:operator_approximation__bound_2} implies that the right-hand side of~\eqref{eq:operator_approximation__bound_1b} can be bounded above as follows
\begin{align} 
\label{eq:operator_approximation__bound_3}
    \eqref{t:operator_approximation}
\le
    C  
    \,
        \big\| \hat{U}(\hat{u}^0_d) - U^\star(\hat{u}^0_d)   \big\|_{\mathcal{H}_T^2}  
\le 
    C 
    \sup_{v\in p_d(\kkk_0)}\,
    \big\| U^\star(v) - \hat{U}(v) \big\|_{\calH^2_T}
\le 
    C  
    \frac{\varepsilon}{2C}
= 
    \frac{\varepsilon}{2}
.
\end{align} 
Upon combining the estimates in~\eqref{eq:input_truncation__bound_2} and in~\eqref{eq:operator_approximation__bound_3} we obtain the following upper-bound for the right-hand side of~\eqref{eq:tb_controlled}
\begin{equation}
\label{eq:tb_controlled__done}
        \big|
                J_{0}
                (u^0,U^{\star}(u^0))
            -
                J_{0}
                \big(
                    \hat{u}^0_d
                ,
                    \hat{U}(
                    \hat{u}^0_d
                    )
                \big)
        \big|
    \le 
        \eqref{t:input_truncation} + \eqref{t:operator_approximation}
    \le 
        \frac{\varepsilon}{2} + \frac{\varepsilon}{2}
    = 
        \varepsilon
.
\end{equation}

\paragraph{Step 3} 

Suppose additionally that $u^0\in \kkk_0$ is such that $(u^0,U^{\star}(u^0))$ is a $(0-)$Stackelberg equilibrium.  
If $(u^0,U^{\star}(u^0))$ is a Stackelberg equilibrium, see Definition~\ref{def:stackelberg}, then this pair is optimal for $J_1$ and the left-hand side of~\eqref{eq:tb_controlled__done} becomes 
\allowdisplaybreaks
\begin{align}
\label{eq:tb_controlled__done___v2}
        0
    \le 
        \big|
                J_{0}(u^0,U^{\star}(u^0))
            -
                J_{0}\big(
                    \hat{u}^0_d
                ,
                    \hat{U}(
                    \hat{u}^0_d
                    )
                \big)
        \big|
    =
            J_{0}\big(
                \hat{u}^0_d
            ,
                \hat{U}(
                \hat{u}^0_d
                )
            \big)
        -
            J_{0}(u^0,U^{\star}(u^0))
.
\end{align}
Consequentially,~\eqref{eq:tb_controlled__done___v2} can be rearranged yielding
\begin{equation}
\label{eq:stackelberg_subcase__1}
        J_{0}\big(
            \hat{u}^0_d
        ,
            \hat{U}(
            \hat{u}^0_d
            )
        \big)
    \le 
        J_{0}(u^0,U^{\star}(u^0))
    +
        \varepsilon
.
\end{equation}
Since the left-hand side of~\eqref{eq:stackelberg_subcase__1} is optimal, then taking infima overall $v$ in $\mathcal{H}_T^2$ does not reduce it further.  Thus,~\eqref{eq:stackelberg_subcase__1} becomes
\begin{equation}
\label{eq:stackelberg_subcase__2}
        \inf_{u^0\in \mathcal{K}_0}\, 
            J_0(u^0,U^{\star}(u^0))
    =
        J_0(u^0,U^{\star}(u^0))
.
\end{equation}
Combining~\eqref{eq:stackelberg_subcase__1} and~\eqref{eq:stackelberg_subcase__2} yields
\[
    J_{0}\big(
        \hat{u}^0_d
    ,
        \hat{U}(
        \hat{u}^0_d
        )
    \big)
\le 
        J_{0}(u^0,U^{\star}(u^0))
    +
        \varepsilon
    =
        \inf_{u^0\in \mathcal{K}_0}\, 
            J_{0}(u^0,U^{\star}(u^0))   
    +
        \varepsilon
.
\]    
This concludes our proofs of Theorems~\ref{thrm:Main__BestResponse} and~\ref{thrm:Objective}.

To obtain the conclusion of Theorem~\ref{thrm:Main__BestResponse___goodrates}, we first note that the non-linear operator $U^{\star}$ is $1/2$-H\"{o}lder continuous.  Since Example~\ref{ex:perturbations__lin_prob_control__Pt2} showed that $K$ is an exponential manifold (in the sense of Definition~\ref{defn:exp_manifold}) then the complexity estimates for the attentional neural operator $\hat{U}$ selected in~\eqref{eq:operator_approximation__bound_2} must be as in Table~\ref{tab:complexity__lemma}.  Consider the special case where $\varepsilon_D=\varepsilon_A$ completes the proof of Theorem~\ref{thrm:Main__BestResponse___goodrates}.
\end{proof}

\section{Acknowledgments}
A.\ Kratsios acknowledges financial support from an NSERC Discovery Grant No.\ RGPIN-2023-04482 and No.\ DGECR-2023-00230.  A.\ Kratsios also acknowledges that resources used in preparing this research were provided, in part, by the Province of Ontario, the Government of Canada through CIFAR, and companies sponsoring the Vector Institute\footnote{\href{https://vectorinstitute.ai/partnerships/current-partners/}{https://vectorinstitute.ai/partnerships/current-partners/}}.
I.\ Ekren is partially funded by NSF grant DMS-2406240.

\bibliography{Bookkeaping/Refs}

\end{document}